\newcommand{\+}{\nobreakdash-}
\renewcommand{\:}{\colon}
\renewcommand{\.}{\mskip.5\thinmuskip}
\newcommand{\ot}{\otimes}
\newcommand{\ocn}{\odot}
\newcommand{\rarrow}{\longrightarrow}
\newcommand{\oc}{\mathbin{\text{\smaller$\square$}}}
\newcommand{\suboc}{\mathbin{\text{\smaller$\.\scriptstyle\square$}}}
\newcommand{\bu}{{\text{\smaller\smaller$\scriptstyle\bullet$}}}
\newcommand{\lrarrow}{\.\relbar\joinrel\relbar\joinrel\rightarrow\.}
\newcommand{\twoheads}{\relbar\joinrel\twoheadrightarrow}
\newcommand{\vect}{{\operatorname{\mathsf{--vect}}}}
\newcommand{\modl}{{\operatorname{\mathsf{--mod}}}}
\newcommand{\comodl}{{\operatorname{\mathsf{--comod}}}}
\newcommand{\comodr}{{\operatorname{\mathsf{comod--}}}}
\newcommand{\bicomod}{{\operatorname{\mathsf{--comod--}}}}
\newcommand{\contra}{{\operatorname{\mathsf{--contra}}}}
\newcommand{\contrar}{{\operatorname{\mathsf{contra--}}}}
\DeclareMathOperator{\Hom}{Hom}
\DeclareMathOperator{\Ext}{Ext}
\DeclareMathOperator{\Cohom}{Cohom}
\DeclareMathOperator{\Cotor}{Cotor}
\DeclareMathOperator{\Coext}{Coext}
\DeclareMathOperator{\Ctrtor}{Ctrtor}
\newcommand{\Hot}{\mathsf{Hot}}
\newcommand{\Acycl}{\mathsf{Acycl}}
\newcommand{\A}{\mathcal A}
\newcommand{\B}{\mathcal B}
\newcommand{\C}{\mathcal C}
\newcommand{\D}{\mathcal D}
\newcommand{\E}{\mathcal E}
\newcommand{\I}{\mathcal I}
\newcommand{\J}{\mathcal J}
\newcommand{\K}{\mathcal K}
\renewcommand{\L}{\mathcal L}
\newcommand{\M}{\mathcal M}
\newcommand{\N}{\mathcal N}
\renewcommand{\S}{\mathcal S} 
\newcommand{\F}{\mathfrak F}
\newcommand{\fI}{\mathfrak I}
\newcommand{\fK}{\mathfrak K}
\renewcommand{\P}{\mathfrak P}
\newcommand{\fS}{\mathfrak S}
\newcommand{\fT}{\mathfrak T}
\newcommand{\sA}{\mathsf A}
\newcommand{\sB}{\mathsf B}
\newcommand{\sC}{\mathsf C}
\newcommand{\sD}{\mathsf D}
\newcommand{\sE}{\mathsf E}
\newcommand{\sF}{\mathsf F}
\newcommand{\sS}{\mathsf S}
\newcommand{\sP}{\mathsf P}
\newcommand{\sJ}{\mathsf J}
\newcommand{\sG}{\mathsf G}
\newcommand{\sH}{\mathsf H}
\newcommand{\sX}{\mathsf X}
\newcommand{\sY}{\mathsf Y}
\renewcommand{\b}{{\mathsf{b}}}
\newcommand{\co}{{\mathsf{co}}}
\newcommand{\ctr}{{\mathsf{ctr}}}
\newcommand{\abs}{{\mathsf{abs}}}
\newcommand{\inj}{{\mathsf{inj}}}
\newcommand{\proj}{{\mathsf{proj}}}
\renewcommand{\ss}{{\mathrm{ss}}}
\newcommand{\rop}{{\mathrm{op}}} 
\newcommand{\boR}{\mathbb R}
\newcommand{\boL}{\mathbb L}
\newcommand{\Section}[1]{\bigskip\section{#1}\medskip}
\theoremstyle{plain}
\newtheorem{thm}{Theorem}[section]
\newtheorem{lem}[thm]{Lemma}
\newtheorem{prop}[thm]{Proposition}
\newtheorem{cor}[thm]{Corollary}
\theoremstyle{definition}
\newtheorem{exs}[thm]{Examples}
\newtheorem{rem}[thm]{Remark}
\begin{document}

\title{Pseudo-dualizing complexes of bicomodules \\
and pairs of t-structures}
\author{Leonid Positselski}

\address{Institute of Mathematics of the Czech Academy of Sciences,
\v Zitn\'a~25, 115~67 Prague~1, Czech Republic; and
\newline\indent Laboratory of Algebra and Number Theory, Institute for
Information Transmission Problems, Moscow 127051, Russia}

\email{positselski@yandex.ru}

\begin{abstract}
 This paper is a coalgebra version of~\cite{Pps} and a sequel
to~\cite{Pmc}.
 We present the definition of a pseudo-dualizing complex of bicomodules
over a pair of coassociative coalgebras $\C$ and~$\D$.
 For any such complex $\L^\bu$, we construct a triangulated category
endowed with a pair of (possibly degenerate) t\+structures of
the derived type, whose hearts are the abelian categories of
left $\C$\+comodules and left $\D$\+contramodules.
 A weak version of pseudo-derived categories arising out of
(co)resolving subcategories in abelian/exact categories with enough
homotopy adjusted complexes is also considered.
 Quasi-finiteness conditions for coalgebras, comodules, and
contramodules are discussed as a preliminary material.
\end{abstract}

\maketitle

\tableofcontents

\section{Introduction}
\medskip

\subsection{{}} \label{introd-pseudo-derived}
 The philosophy of pseudo-derived equivalences was invented for
the purposes of $\infty$\+tilting theory in~\cite{PS2} and applied to 
pseudo-dualizing complexes of bimodules in~\cite{Pps}.
 The latter are otherwise known as ``semi-dualizing complexes'' in
the literature~\cite{Chr,HW}.

 In its full form, a pseudo-derived equivalence between two abelian 
categories means an equivalence between their exotic derived categories
standing in some sense ``in between'' the conventional unbounded derived
and co/contraderived categories.
 To be more precise, a ``pseudo-derived category'' means either
a pseudo-coderived or a pseudo-contraderived category.
 A construction of such pseudo-derived categories associated with 
(co)resolving subcategories closed under (co)products in abelian
categories with exact (co)products was suggested in~\cite{PS2} and
used in~\cite{Pps}.

 Let $\sA$ be an abelian category with exact coproduct functors.
 The \emph{coderived category} $\sD^\co(\sA)$ is defined as
the triangulated Verdier quotient category of the homotopy category
of unbounded complexes $\Hot(\sA)$ by its minimal full triangulated
subcategory $\Acycl^\co(\sA)$ containing the totalizations of short
exact sequences of complexes in $\sA$ and closed under coproducts.
 The condition of exactness of coproducts in $\sA$ guarantees that
all the complexes from $\Acycl^\co(\sA)$ are acyclic in the conventional
sense, $\Acycl^\co(\sA)\subset\Acycl(\sA)$; hence the conventional
derived category $\sD(\sA)$ is a Verdier quotient category of
the coderived category $\sD^\co(\sA)$.

 Let $\sE\subset\sA$ be a \emph{coresolving subcategory}, i.~e., a full
subcategory closed under extensions and cokernels of monomorphisms,
and such that every object of $\sA$ is a subobject of an object
from~$\sE$.
 Assume further that the full subcategory $\sE$ is closed under
coproducts in~$\sA$.
 As explained in~\cite[Section~4]{PS2} and~\cite[Section~1]{Pps},
the derived category $\sD(\sE)$ of the exact category $\sE$ is
an intermediate Verdier quotient category between $\sD^\co(\sA)$ and
$\sD(\sA)$; so the Verdier quotient functor $\sD^\co(\sA)\twoheads
\sD(\sA)$ is the composition of two triangulated Verdier quotient
functors $\sD^\co(\sA)\twoheads\sD(\sE)\twoheads\sD(\sA)$.
 The category $\sD(\sE)$ is called the \emph{pseudo-coderived category}
of $\sA$ associated with the coresolving subcategory $\sE\subset\sA$
closed under coproducts.

 Dually, let $\sB$ be an abelian category with exact products.
 The \emph{contraderived category} $\sD^\ctr(\sB)$ is the triangulated
Verdier quotient category of the homotopy category $\Hot(\sB)$ by
its minimal full triangulated subcategory containing the totalizations
of short exact sequences in $\sB$ and closed under products.
 The conventional derived category $\sD(\sB)$ is naturally a Verdier
quotient category of the contraderived category.

 Let $\sF\subset\sB$ be a \emph{resolving subcategory}, i.~e., a full
subcategory closed under extensions and kernels of monomorphisms
such that every object of $\sB$ is a quotient object of an object
from~$\sF$.
 Assume that $\sF$ is closed under products in~$\sB$.
 Then the derived category $\sD(\sF)$ of the exact category $\sF$ is
an intermediate Verdier quotient category between the contraderived
category $\sD^\ctr(\sB)$ and the derived category $\sD(\sB)$; so there
are triangulated Verdier quotient functors
$\sD^\ctr(\sB)\twoheads\sD(\sF)\twoheads\sD(\sB)$.
 The triangulated category $\sD(\sF)$ is called
the \emph{pseudo-contraderived category} of $\sB$ associated with
the resolving subcategory $\sF\subset\sB$ closed under products.

 A \emph{pseudo-derived equivalence} between abelian categories $\sA$
and $\sB$ is a triangulated equivalence $\sD(\sE)\simeq\sD(\sF)$
between their pseudo-derived categories associated with some full
subcategories $\sE\subset\sA$ and $\sF\subset\sB$.
 Following the papers~\cite{PS2,Pps}, pseudo-derived equivalences occur
in the $\infty$\+tilting theory and in connection with pseudo-dualizing
complexes of bimodules over a pair of associative rings.

\subsection{{}} \label{introd-t-structures}
 It appears, though, that coresolving subcategories closed under
coproducts and resolving subcategories closed under products are not as
common as one would like them to be.
 For example, if $\sA$ is an abelian category with enough injective
objects, then the full subcategory of injective objects $\sA_\inj$
is coresolving in $\sA$, but it is rarely closed under coproducts.
 Similarly, if $\sB$ is an abelian category with enough projectives,
then the full subcategory of projective objects $\sB_\proj\subset\sB$ is
resolving in $\sB$, but it is rarely closed under products.

 What can one say about the derived category $\sD(\sE)$ or $\sD(\sF)$ of
a (co)resolving subcategory $\sE\subset\sA$ or $\sF\subset\sB$ that is
not closed under (co)products?
 How does one interpret a triangulated equivalence $\sD(\sE)\simeq
\sD(\sF)$ in terms of the original abelian categories $\sA$ and~$\sB$\,?
 For this purpose, a weak version of pseudo-derived equivalences with
an equivalence of pseudo-derived categories replaced by a pair of
t\+structures on a single triangulated category was discussed
in~\cite[Section 5]{PS2}.

 Specifically, let $\sA$ be an abelian category and $\sE\subset\sA$ be
a coresolving subcategory.
 Denote by $\sD^{\ge0}(\sE)$ the full subcategory in $\sD(\sE)$
consisting of all the objects that can be represented by nonnegatively
cohomologically graded complexes in~$\sE$.
 Furthermore, denote by $\sD_\sA^{\le0}(\sE)\subset\sD(\sE)$ the full
subcategory of all objects $E^\bu$ whose images in $\sD(\sA)$ have their
cohomology objects $H_\sA^n(E^\bu)\in\sA$ concentrated in nonpositive
cohomological degrees~$n$.
 Then the pair of full subcategories $\sD_\sA^{\le0}(\sE)$ and
$\sD^{\ge0}(\sE)\subset\sD(\sE)$ is a t\+structure on $\sD(\sE)$
with the heart naturally equivalent to~$\sA$.
 Moreover, it is a t\+structure \emph{of the derived type}, i.~e.,
the Ext groups between objects of the heart computed in the triangulated
category $\sD(\sE)$ agree with the Ext groups in the abelian
category~$\sA$ \cite[Proposition~5.5]{PS2}.
 
 Dually, if $\sB$ is an abelian category and $\sF\subset\sB$ is
a resolving subcategory, then the pair of full subcategories
$\sD^{\le0}(\sF)$ and $\sD^{\ge0}_\sB(\sF)$ is a t\+structure of
the derived type on $\sD(\sF)$ with the heart naturally equivalent
to~$\sB$.
 Such t\+structures can well be \emph{degenerate}, though:
the intersections $\bigcap_{n\ge0}\sD^{\ge n}(\sE)$ and
$\bigcap_{n\le0}\sD^{\le n}(\sF)$ always vanish, but the intersections
$\bigcap_{n\le0}\sD_\sA^{\le n}(\sE)$ and
$\bigcap_{n\ge0}\sD_\sB^{\ge n}(\sF)$ are often
nontrivial~\cite[Remark~5.6]{PS2}.

 Thus, if the triangulated categories $\sD(\sE)$ and $\sD(\sF)$ happen
to be equivalent, we obtain a pair of t\+structures of the derived
type on one and the same triangulated category $\sD(\sE)=\sD=\sD(\sF)$.
 The hearts of these two t\+structures are the abelian categories $\sA$
and $\sB$, respectively.
 Under a natural additional assumption of nontriviality, we call such
a situation a \emph{t\+derived pseudo-equivalence} between the abelian
categories $\sA$ and~$\sB$.
 The aim of this paper is to show that such a situation does occur in
connection with what we call a \emph{pseudo-dualizing complex of
bicomodules} over a pair of coalgebras $\C$ and~$\D$.

\subsection{{}} \label{introd-homotopy-adjusted}
 We also suggest an alternative point of view on pseudo-derived
categories, which is in some way intermediate between the approaches
of Section~\ref{introd-pseudo-derived} (i.~e., \cite[Section~4]{PS2})
and Section~\ref{introd-t-structures} (i.~e., \cite[Section~5]{PS2}).

 Namely, let $\sA$ be an exact category.
 We say that $\sA$ \emph{has enough homotopy injective complexes of
injectives} if every object of the derived category $\sD(\sA)$ can
be represented by a homotopy injective complex of injective objects
in $\sA$, i.~e., a complex of injectives that is right orthogonal
to all acyclic complexes in the homotopy category $\Hot(\sA)$.
 For example, the abelian category of $\C$\+comodules has enough
homotopy injective complexes of injectives for any coalgebra~$\C$
\cite[Theorem~2.4(a)]{Pkoszul}, \cite[Theorem~1.1(c)]{Pmc};
moreover, any Grothendieck abelian category has enough homotopy
injective complexes of injectives~\cite{AJS,Ser,Gil}.

 Let $\sE\subset\sA$ be a coresolving subcategory containing
the injective objects.
 Then the functor $\sD(\sE)\rarrow\sD(\sA)$ induced by the inclusion
$\sE\rarrow\sA$ is a Verdier quotient functor, and it also has
a (fully faithful) right adjoint.
 Moreover, there is the following diagram of triangulated functors:
\begin{equation} \label{homotopy-injectives-coresolving}
\begin{tikzcd}
\Hot(\sA_\inj) \arrow[d] \arrow[dd, two heads, bend right = 50] \\
\sD(\sE) \arrow[d, two heads] \\
\sD(\sA) \arrow[u, tail, bend right = 45]
\arrow[uu, tail, bend right = 65]
\end{tikzcd}
\end{equation}
 Here arrows with a tail denote fully faithful functors and arrows with
two heads denote Verdier quotient functors.
 The downwards directed curvilinear arrow is the composition of the two
straight downwards directed arrows, while the upwards directed
curvilinear arrows denote right adjoint functors to the downwards
directed arrows.
 The functor $\sD(\sA)\rarrow\sD(\sE)$ is the composition $\sD(\sA)
\rarrow\Hot(\sA_\inj)\rarrow\sD(\sE)$.

 Dually, let $\sB$ be an exact category with enough homotopy projective
complexes of projectives.
 For example, the abelian category of $\C$\+contramodules has enough
homotopy projective complexes of projectives for any coalgebra~$\C$
\cite[Theorem~2.4(b)]{Pkoszul}, \cite[Theorem~1.1(a)]{Pmc};
moreover, any locally presentable abelian category with enough
projective objects has enough homotopy projective complexes of
projectives~\cite[Corollary~6.7]{PS4}.
 Let $\sF\subset\sB$ be a resolving subcategory containing
the projective objects.
 Then the functor $\sD(\sF)\rarrow\sD(\sB)$ induced by the inclusion
$\sF\rarrow\sB$ is a Verdier quotient functor, and it also has
a (fully faithful) left adjoint.
 Moreover, there is the following diagram of triangulated functors:
\begin{equation} \label{homotopy-projectives-resolving}
\begin{tikzcd}
\Hot(\sB_\proj) \arrow[d] \arrow[dd, two heads, bend left = 50] \\
\sD(\sF) \arrow[d, two heads] \\
\sD(\sB) \arrow[u, tail, bend left = 45]
\arrow[uu, tail, bend left = 65]
\end{tikzcd}
\end{equation}
 Here the downwards directed curvilinear arrow is the composition of
the two straight downwards directed arrows, while the upwards directed
curvilinear arrows denote left adjoint functors to the downwards
directed arrows.
 The functor $\sD(\sB)\rarrow\sD(\sF)$ is the composition $\sD(\sB)
\rarrow\Hot(\sB_\proj)\rarrow\sD(\sF)$.

\subsection{{}} \label{introd-dedualizing-bicomod}
 The philosophy of \emph{dualizing} and \emph{dedualizing complexes} was
discussed in the paper~\cite{Pmgm} and, in the context of coalgebras,
in the paper~\cite{Pmc}.
 Briefly put, dualizing complexes induce equivalences between
the coderived and contraderived categories, while dedualizing complexes
provide equivalences between conventional derived categories.
 In particular, given an associative ring $A$, the one-term complex of
$A$\+$A$\+bimodules $A$ is the simplest example of a dedualizing complex
of bimodules, related to the identity derived equivalence $\sD(A\modl)=
\sD(A\modl)$; while the datum of a dualizing complex $D^\bu$ for a pair
of rings $A$ and $B$ leads to a triangulated equivalence
$\sD^\co(A\modl)\simeq\sD^\ctr(B\modl)$ \cite{Pfp,Pps}.

 Let $k$~be a fixed ground field and $\C$ be a (coassociative, counital)
coalgebra over~$k$.
 Then there are two kinds of abelian categories that one can assign
to~$\C$: in addition to the more familiar categories of left and right
$\C$\+comodules $\C\comodl$ and $\comodr\C$, there are also less
familiar, but no less natural abelian categories of left and right
\emph{$\C$\+contramodules} $\C\contra$ and $\contrar\C$ \cite{Prev}.

 The abelian category of $\C$\+comodules has exact functors of infinite
coproducts and enough injective objects, while the abelian category of
$\C$\+contramodules has exact functors of infinite products and enough
projective objects.
 There is a fundamental homological phenomenon of
\emph{comodule-contramodule correspondence}, meaning a natural
triangulated equivalence between the coderived category of comodules
and the contraderived category of
contramodules~\cite[Sections~0.2.6\+-0.2.7]{Psemi},
and~\cite[Sections~4.4 and~5.2]{Pkoszul}
\begin{equation} \label{derived-co-contra}
 \sD^\co(\C\comodl)\simeq\Hot(\C\comodl_\inj)\simeq
 \Hot(\C\contra_\proj)\simeq\sD^\ctr(\C\contra).
\end{equation}

 The triangulated equivalence~\eqref{derived-co-contra} is induced by
an equivalence between the additive categories of injective left
$\C$\+comodules and projective left $\C$\+contramodules,
$\C\comodl_\inj\simeq\C\contra_\proj$.
 The latter equivalence is provided by the adjoint functors of left
$\C$\+comodule homomorphisms from and the \emph{contratensor product}
with the $\C$\+$\C$\+bicomodule~$\C$,
\begin{equation} \label{underived-co-contra}
 \Hom_\C(\C,{-})\:\C\comodl_\inj\simeq
 \C\contra_\proj\,:\!\C\ocn_\C{-}.
\end{equation}
 Thus the one-term complex of $\C$\+$\C$\+bicomodules $\C$ is
the simplest example of a dualizing complex of bicomodules.

 Let $\C$ and $\D$ be two coalgebras over the same ground field~$k$.
 The definition of a \emph{dedualizing complex} of
$\C$\+$\D$\+bicomodules was given in the paper~\cite[Section~3]{Pmc}.
 According to~\cite[Theorem~3.6]{Pmc}, the datum of a dedualizing
complex of bicomodules $\B^\bu$ for a left cocoherent coalgebra $\C$
and a right cocoherent coalgebra $\D$ over a field~$k$ induces
an equivalence of derived categories
\begin{equation} \label{dedualizing-bicomodule-equivalences}
\sD^\star(\C\comodl)\simeq\sD^\star(\D\contra)
\end{equation}
for any conventional derived category symbol $\star=\b$, $+$, $-$,
or~$\varnothing$.
 Moreover, similar triangulated equivalences can be constructed for any
\emph{absolute} derived category symbol $\star=\abs+$, $\abs-$,
or~$\abs$.
 The triangulated
equivalences~\eqref{dedualizing-bicomodule-equivalences} are provided
by the mutually inverse right derived functor of comodule homomorphisms
$\boR\Hom_\C(\B^\bu,{-})$ and left derived functor of
contratensor product $\B^\bu\ocn_\D^\boL{-}$.

\subsection{{}} \label{introd-finiteness}
 All the definitions of a dualizing complex of
(bi)modules~\cite{Har,Yek,YZ,Miy,CFH,Pfp} involve three kinds of
conditions: (i)~finite injective dimension, (ii)~finite generatedness,
and (iii)~homothety isomorphisms.
 Analogously, the definitions of a dedualizing complex
in~\cite{Pmgm,Pmc} involve (roughly) three kinds of conditions:
(i)~finite projective dimension, (ii)~finite (co)generatedness,
and (iii)~homothety isomorphisms.

 The definition of a pseudo-dualizing complex~\cite{Pps} (or, in
the more traditional terminology, a ``semi-dualizing
complex''~\cite{Chr,HW}) is obtained from that of a (de)dualizing
complex by dropping the finite injective/projective dimension
condition~(i), while retaining the finite generatedness and homothety
isomorphism conditions~(ii\+-iii).

 In all these situations, the (derived) homothety isomorphism
conditions are rather straightforward to formulate, but the finite
(co)generatedness conditions are complicated, with many alternative
versions of them considered in various papers.
 In particular, the paper~\cite{Pmc} starts with a discussion of
finite (co)generatedness and (co)presentability conditions in comodule
and contramodule categories in~\cite[Section~2]{Pmc}.
 As we mentioned in Section~\ref{introd-dedualizing-bicomod},
the construction of the derived equivalence in~\cite[Theorem~3.6]{Pmc}
(see~\eqref{dedualizing-bicomodule-equivalences} above) was given in
the assumption of cocoherence conditions on the coalgebras $\C$ and~$\D$.

 Finite cogeneratedness and finite copresentability conditions are
not very natural for coalgebras, though, as they are not invariant
under the Morita--Takeuchi equivalences of coalgebras.
 Quasi-finite cogeneratedness and quasi-finite copresentability
conditions, going back to Takeuchi's classical paper~\cite{Tak},
are generally preferable.
 So this paper starts with a discussion of quasi-finitely cogenerated
and copresented comodules and quasi-finitely generated and presented
contramodules in Section~\ref{quasi-finite-secn}.

 We strived to relax the finite generatedness/presentability conditions
as much as possible in the paper~\cite{Pps}, and we do likewise in
the present paper.
 The result is that no coherence assumptions about the rings $A$ and $B$
are used in the main results of~\cite{Pps}, and no cocoherence or
quasi-cocoherence assumptions about the coalgebras $\C$ and $\D$ are
made in the main results of the present paper.
 Instead, we imposed appropriate finite presentability conditions
on the pseudo-dualizing complex of bimodules $L^\bu$ in
the paper~\cite{Pps}, and we impose appropriate quasi-finite
copresentability conditions on the pseudo-dualizing complex of
bicomodules $\L^\bu$ in this paper.

\subsection{{}} \label{introd-main-results}
 Let $\C$ and $\D$ be coassociative coalgebras over a fixed field~$k$.
 A \emph{pseudo-dualizing complex} $\L^\bu$ for the coalgebras $\C$
and $\D$ (cf.\ the discussion of ``semidualizing bicomodules''
in~\cite{GLT}) is a finite complex of $\C$\+$\D$\+bicomodules satisfying
the following two conditions:
\begin{enumerate}
\renewcommand{\theenumi}{\roman{enumi}}
\setcounter{enumi}{1}
\item as a complex of left $\C$\+comodules, $\L^\bu$ is quasi-isomorphic
to a bounded below complex of quasi-finitely cogenerated injective
$\C$\+comodules, and similarly, as a complex of right $\D$\+comodules,
$\L^\bu$ is quasi-isomorphic to a bounded below complex of
quasi-finitely cogenerated injective $\D$\+comodules;
\item the homothety maps
$\C^*\rarrow\Hom_{\sD^\b(\comodr\D)}(\L^\bu,\L^\bu[*])$ and
$\D^*{}^\rop\rarrow\Hom_{\sD^\b(\C\comodl)}(\L^\bu,\L^\bu[*])$
are isomorphisms of graded rings. \hbadness=1700
\end{enumerate}
 This definition is obtained by dropping the finite projective and
contraflat dimension condition~(i) from the definition of
a \emph{dedualizing} complex of $\C$\+$\D$\+bicomodules $\B^\bu$
in~\cite[Section~3]{Pmc}, removing the cocoherence conditions on
the coalgebras, and rewriting the finite copresentability
condition~(ii) accordingly.
 Here the quasi-finite cogeneratedness is a natural weakening
of the finite cogeneratedness condition on comodules, having
the advantage of being Morita-invariant~\cite{Tak}, as
discussed above in Section~\ref{introd-finiteness}.

 The main result of this paper provides the following diagram of
triangulated functors associated with a pseudo-dualizing complex {of
$\C$\+$\D$\+bicomodules~$\L^\bu$ (cf.~\cite{Pps}): \hfuzz=6.5pt
\begin{equation} \label{main-results-diagram}
\!\!\!\begin{tikzcd}
\Hot(\C\comodl) \arrow[d, two heads]
\arrow[dddd, two heads, bend right=90] &&&&&
\Hot(\D\contra) \arrow[d, two heads]
\arrow[dddd, two heads, bend left=90] \\
\sD^\co(\C\comodl) \arrow[d] 
\arrow[u, tail, bend right=70]
\arrow[ddd, two heads, bend right=80] &&&&&
\sD^\ctr(\D\contra) \arrow[d] 
\arrow[u, tail, bend left=70]
\arrow[ddd, two heads, bend left=80] \\
\sD^{\L^\bu}_{\prime}(\C\comodl) \arrow[d]
\arrow[rrrrr, Leftrightarrow, no head, no tail]
\arrow[dd, two heads, bend right=72] &&&&&
\sD^{\L^\bu}_{\prime\prime}(\D\contra) \arrow[d]
\arrow[dd, two heads, bend left=72] \\
\sD'_{\L^\bu}(\C\comodl) \arrow[d, two heads]
\arrow[rrrrr, Leftrightarrow, no head, no tail] &&&&&
\sD''_{\L^\bu}(\D\contra) \arrow[d, two heads] \\
\sD(\C\comodl) \arrow[uuuu, tail, bend right=102]
\arrow[uuu, tail, bend right=90]
\arrow[uu, tail, bend right=78]
\arrow[u, tail, bend right=70] &&&&&
\sD(\D\contra) \arrow[uuuu, tail, bend left=102]
\arrow[uuu, tail, bend left=90]
\arrow[uu, tail, bend left=78]
\arrow[u, tail, bend left=70]
\end{tikzcd}
\end{equation}
 Here} the functors shown by arrows with two heads are Verdier quotient
functors, the functors shown by arrows with a tail are fully faithful,
and double lines show triangulated equivalences.
 The outer, downwards directed curvilinear arrows (with two heads) are
the compositions of the vertical straight arrows.
 The inner, upwards directed curvilinear arrows (with a tail) are
adjoint on the right (in the comodule part of the diagram) and on
the left (in the contramodule part of the diagram) to the compositions
of the vertical straight arrows.

 In particular, when $\L^\bu=\B^\bu$ is a dedualizing complex for
a pair of coalgebras $\C$ and $\D$, i.~e., the finite
projective/contraflat dimension condition~(i)
of~\cite[Section~3]{Pmc} is satisfied, one has
$\sD'_{\L^\bu}(\C\comodl)=\sD(\C\comodl)$ and $\sD''_{\L^\bu}(\D\contra)
=\sD(\D\contra)$.
 In other words, the lower two vertical arrows in
the diagram~\eqref{main-results-diagram}
are isomorphisms of triangulated categories.
 The lower triangulated equivalence in
the diagram~\eqref{main-results-diagram} coincides with the one
provided by~\cite[Theorem~3.6]{Pmc} in this case.

 When $\L^\bu=\C=\D$, one has $\sD_\prime^{\L^\bu}(\C\comodl)=
\sD^\co(\C\comodl)$ and $\sD_{\prime\prime}^{\L^\bu}(\D\contra)=
\sD^\ctr(\D\contra)$, that is the next-to-upper two vertical arrows
in the diagram~\eqref{main-results-diagram}
are isomorphisms of triangulated categories.
 The upper triangulated equivalence in 
the diagram~\eqref{main-results-diagram} is the derived
comodule-contramodule correspondence~\eqref{derived-co-contra}
in this case.
 More generally, the upper triangulated equivalence in
the diagram~\eqref{main-results-diagram} corresponding to
a \emph{dualizing complex} $\L^\bu=\K^\bu$ for a pair of coalgebras
$\C$ and $\D$ can be thought of as a part of derived
Morita--Takeuchi equivalence (cf.~\cite{Far}).

\subsection{{}}
 Among the five pairs of triangulated categories on
the diagram~\eqref{main-results-diagram}, there are two pairs which
depend on the pseudo-dualizing complex~$\L^\bu$.
 These four triangulated categories $\sD^{\L^\bu}_{\prime}(\C\comodl)$,
\,$\sD^{\L^\bu}_{\prime\prime}(\D\contra)$,
\,$\sD'_{\L^\bu}(\C\comodl)$, and $\sD''_{\L^\bu}(\D\contra)$
are constructed in the following way.

 Suppose that the finite complex $\L^\bu$ is situated in
the cohomological degrees $-d_1\le m\le d_2$.
 Then there are two pairs of sequences of full subcategories
$$
 \dotsb\subset\sE^{d_2+2}\subset\sE^{d_2+1}\subset\sE^{d_2}
 \subset\sE_{d_1}\subset\sE_{d_1+1}\subset\sE_{d_1+2}\subset\dotsb
 \subset\sA
$$
and
$$
 \dotsb\subset\sF^{d_2+2}\subset\sF^{d_2+1}\subset\sF^{d_2}
 \subset\sF_{d_1}\subset\sF_{d_1+1}\subset\sF_{d_1+2}\subset\dotsb
 \subset\sB
$$
in the abelian categories $\sA=\C\comodl$ and $\sB=\D\contra$.
 The subcategories with lower indices form increasing sequences,
while the subcategories with upper indices form decreasing sequences.
 The full subcategories $\sE_{l_1}$ and $\sE^{l_2}\subset\sA$,
where $l_1\ge d_1$ and $l_2\ge d_2$, are coresolving, while
the full subcategories $\sF_{l_1}$ and $\sF^{l_2}\subset\sB$
are resolving.

 For any two integers $l''_1\ge l'_1\ge d_1$, the category $\sE_{l_1''}$
has finite $\sE_{l_1'}$\+coresolution dimension, while the category
$\sF_{l_1''}$ has finite $\sF_{l_1'}$\+resolution dimension.
 Therefore, the derived category $\sD(\sE_{l_1})$ of the exact
category $\sE_{l_1}$ does not depend on the choice of an integer
$l_1\ge d_1$, and similarly, the derived category $\sD(\sF_{l_1})$ of
the exact category $\sF_{l_1}$ does not depend on the choice of~$l_1$.
 We set
$$
 \sD'_{\L^\bu}(\C\comodl)=\sD(\sE_{l_1})
 \quad\text{and}\quad
 \sD''_{\L^\bu}(\D\contra)=\sD(\sF_{l_1}).
$$

 For any two integers $l''_2\ge l'_2\ge d_2$, the category $\sE^{l_2'}$
has finite $\sE^{l_2''}$\+coresolution dimension, while the category
$\sF^{l_2'}$ has finite $\sF^{l_2''}$\+resolution dimension.
 Therefore, the derived category $\sD(\sE^{l_2})$ does not depend on
the choice of an integer $l_2\ge d_2$, and similarly, the derived
category $\sD(\sF^{l_2})$ does not depend on the choice of~$l_2$.
 We set
$$
 \sD_{\prime}^{\L^\bu}(\C\comodl)=\sD(\sE^{l_2})
 \quad\text{and}\quad
 \sD_{\prime\prime}^{\L^\bu}(\D\contra)=\sD(\sF^{l_2}).
$$

 Let us now briefly explain where the full subcategories
$\sE_{l_1}$, \,$\sE^{l_2}$, \,$\sF_{l_1}$, and $\sF^{l_2}$ come from.
 The full subcategories $\sE_{l_1}\subset\C\comodl$ and
$\sF_{l_1}\subset\D\contra$ are our analogues of what are
known as the \emph{Auslander} and \emph{Bass classes} in
the literature~\cite{Chr,FJ,CFH,HW,GLT}.
 So they are defined as the classes of all left $\C$\+comodules and
left $\D$\+contramodules satisfying certain conditions with respect to
the derived functors $\boR\Hom_\C(\L^\bu,{-})$ and
$\L^\bu\ocn_\D^\boL{-}$, with the parameter~$l_1$ meaning a certain
(co)homological degree.
 The full subcategory $\sF_{l_1}\subset\D\contra$ is an analogue of
the Auslander class and the full subcategory $\sE_{l_1}\subset\C\comodl$
is a version of the Bass class.
 These are the \emph{maximal corresponding classes} of objects in
the categories $\sA=\C\comodl$ and $\sB=\D\contra$ with respect to
the covariant duality defined by the pseudo-dualizing complex~$\L^\bu$
(cf.\ the discussions of the Auslander and Bass classes
in~\cite[Sections~0.7 and~3]{Pps} and the maximal $\infty$\+tilting
and $\infty$\+cotilting classes in~\cite[Sections~2\+-3]{PS2}).

 The full subcategories $\sE^{l^2}\subset\C\comodl$ and
$\sF^{l^2}\subset\D\contra$ are the \emph{minimal corresponding classes}
in the categories $\sA=\C\comodl$ and $\sB=\D\contra$.
 They are defined by a certain iterative generation procedure, starting
from the full subcategory of injectives in $\sA$ and the full
subcategory of projectives in $\sB$, and proceeding using the derived
functors $\boR\Hom_\C(\L^\bu,{-})$ and $\L^\bu\ocn_\D^\boL{-}$, with
the parameter~$l_2$, once again, meaning a certain
(co)homological degree.
 These are the analogues of the minimal corresponding classes
from~\cite[Sections~0.7 and~5]{Pps} and of the minimal $\infty$\+tilting
and $\infty$\+cotilting classes from~\cite[Lemma~3.6
and Example~3.7]{PS2}.

 The derived equivalences
$$
 \sD^\star(\sE_{l_1})\simeq\sD^\star(\sF_{l_1})
 \quad\text{and}\quad
 \sD^\star(\sE^{l_2})\simeq\sD^\star(\sF^{l_2})
$$
(including, in particular, the triangulated equivalences in
the diagram~\eqref{main-results-diagram} arising as the particular
cases for $\star=\varnothing$) are provided by \emph{appropriately
constructed} derived functors of comodule homomorphisms and contratensor
product $\boR\Hom_\C(\L^\bu,{-})$ and $\L^\bu\ocn_\D^\boL{-}$.
 The rather complicated constructions of these derived functors are
based on the technique developed in~\cite[Appendix~A]{Pps}.

 Following the discussion in Section~\ref{introd-t-structures},
each of the triangulated categories
$$
 \sD'_{\L^\bu}(\C\comodl)\simeq\sD''_{\L^\bu}(\D\contra)
 \quad\text{and}\quad
 \sD_{\prime}^{\L^\bu}(\C\comodl)\simeq
 \sD_{\prime\prime}^{\L^\bu}(\D\contra)
$$
carries two (very possibly degenerate) t\+structures of the derived type,
whose abelian hearts are the categories of left $\C$\+comodules
and left $\D$\+contramodules $\sA=\C\comodl$ and $\sB=\D\contra$.

\medskip
\textbf{Acknowledgement.} 
 I~am grateful to Jan \v St\!'ov\'\i\v cek for helpful discussions.
 The author's research is supported by the GA\v CR project 20-13778S
and research plan RVO:~67985840.

\Section{Homotopy Adjusted Complexes and (Co)Resolving Subcategories}
\label{homotopy-adjusted-coresolving-secn}

 In this section we prove the results promised in
Section~\ref{introd-homotopy-adjusted} of the introduction.

 Following the terminology in~\cite{Pps} (originating from
the assumptions of~\cite[Sections~A.3 and~A.5]{Pcosh}), which differs
slightly from the standard terminology, we do \emph{not} include
closedness under direct summands into the definition of a (co)resolving
subcategory (see Section~\ref{introd-pseudo-derived}).
 This allows us to avoid the unnecessary closure under direct summands
in the conditions~(I\+-IV) of Section~\ref{abstract-classes-secn} below.
 That is why we sometimes need to assume separately that a coresolving
subcategory contains the injective objects, or that a resolving
subcategory contains the projective objects.

 Let $\sA$ be an exact category (the reader will loose little by
assuming that $\sA$ is abelian).
 An (unbounded) complex $J^\bu$ in $\sA$ is said to be \emph{homotopy
injective} if for any acyclic complex $X^\bu$ in $\sA$ the complex of
abelian groups $\Hom_\sA(X^\bu,J^\bu)$ is acyclic.
 We say that an exact category $\sA$ has \emph{enough homotopy
injective complexes} if for any complex $M^\bu$ in $\sA$ there exists
a homotopy injective complex $J^\bu$ together with a quasi-isomorphism
$M^\bu\rarrow J^\bu$ of complexes in~$\sA$.

 Consider the canonical Verdier quotient functor from the homotopy
category to the derived category of unbounded complexes,
\begin{equation} \label{homotopy-derived-A}
 \Hot(\sA)\lrarrow\sD(\sA).
\end{equation}
 An exact category $\sA$ has enough homotopy injective complexes if and
only if the functor~\eqref{homotopy-derived-A} has a right adjoint.
 If this is the case, such a right adjoint functor assigns to
any complex $M^\bu$ in $\sA$ its homotopy injective resolution, i.~e.,
a homotopy injective complex $J^\bu$ endowed with a quasi-isomorphism
$M^\bu\rarrow J^\bu$.

 Moreover, we will say that an exact category $\sA$ has \emph{enough
homotopy injective complexes of injectives} if for any complex $M^\bu$
in $\sA$ there exists a homotopy injective complex \emph{of injective
objects} $J^\bu$ together with a quasi-isomorphism $M^\bu\rarrow J^\bu$
of complexes in $\sA$.
 If this is the case, then the right adjoint functor
$\sD(\sA)\rarrow\Hot(\sA)$ to the functor~\eqref{homotopy-derived-A}
factorizes through the homotopy category of complexes of injective
objects, $\sD(\sA)\rarrow\Hot(\sA_\inj)\rarrow\Hot(\sA)$.
 We will denote the resulting functor by $\theta\:\sD(\sA)\rarrow
\Hot(\sA_\inj)$.

 For example, any Grothendieck abelian category $\sA$ has enough
homotopy injective complexes~\cite[Theorem~5.4]{AJS}; moreover,
it has enough homotopy injective complexes of injective
objects~\cite[Theorem~3.13 and Lemma~3.7(ii)]{Ser},
\cite[Corollary~7.1]{Gil}.
 In particular, for any coassociative coalgebra $\C$ over a field~$k$,
the abelian category of left $\C$\+comodules $\sA=\C\comodl$ has
enough homotopy injective complexes of injectives~\cite[Sections~2.4
and~5.5]{Pkoszul} (see also~\cite[Theorem~1.1(c)]{Pmc}).

 Dually, let $\sB$ be an exact category (which is also going to be
abelian in most applications).
 An (unbounded) complex $P^\bu$ in $\sB$ is said to be \emph{homotopy
projective} if for any acyclic complex $Y^\bu$ in $\sB$ the complex
of abelian groups $\Hom_\sB(P^\bu,Y^\bu)$ is acyclic.
 We say that an exact category $\sB$ has \emph{enough homotopy
projective complexes} if for any complex $T^\bu$ in $\sB$ there exists
a homotopy projective complex $P^\bu$ together with a quasi-isomorphism
$P^\bu\rarrow T^\bu$ of complexes in~$\sB$.

 An exact category $\sB$ has enough homotopy projective complexes if
and only if the canonical Verdier quotient functor $\Hot(\sB)\rarrow
\sD(\sB)$ has a left adjoint (which then assigns to a complex $T^\bu$
in $\sB$ its homotopy projective resolution).

 Moreover, we will say that an exact category $\sB$ has \emph{enough
homotopy projective complexes of projectives} if for any complex $T^\bu$
in $\sB$ there exists a homotopy projective complex \emph{of projective
objects} $P^\bu$ together with a quasi-isomorphism $P^\bu\rarrow T^\bu$.
 If this is the case, then the above-mentioned left adjoint functor
$\sD(\sB)\rarrow\Hot(\sB)$ to the canonical Verdier quotient functor
factorizes through the homotopy category of complexes of projective
objects, $\sD(\sB)\rarrow\Hot(\sB_\proj)\rarrow\Hot(\sB)$.
 We will denote the resulting functor by $\varkappa\:\sD(\sB)\rarrow
\Hot(\sB_\proj)$.

 For example, any locally presentable abelian category $\sB$ with
enough projective objects has enough homotopy projective complexes of
projectives~\cite[Lemma~6.1 and Corollary~6.7]{PS4}.
 In particular, for any coassociative coalgebra $\D$ over a field~$k$,
the abelian category of left $\D$\+contramodules $\sB=\D\contra$ has
enough homotopy projective complexes of projective
objects~\cite[Sections~2.4 and~5.5]{Pkoszul}
(see also~\cite[Theorem~1.1(a)]{Pmc}).

\begin{thm} \label{weak-pseudoderived}
\textup{(a)} Let\/ $\sA$ be an exact category with enough homotopy
injective complexes of injectives, and let\/ $\sE\subset\sA$ be
a coresolving subcategory containing the injective objects.
 Then the triangulated functor\/ $\sD(\sE)\rarrow\sD(\sA)$ induced
by the inclusion of exact categories\/ $\sE\rarrow\sA$ is a Verdier
quotient functor, and it has a right adjoint functor
$\rho\:\sD(\sA)\rarrow\sD(\sE)$.
 Moreover, there is a diagram of triangulated functors
$$
\begin{tikzcd}
\Hot(\sA_\inj) \arrow[d] \arrow[dd, two heads, bend right = 50] \\
\sD(\sE) \arrow[d, two heads] \\
\sD(\sA) \arrow[u, tail, bend right = 51, "\rho"]
\arrow[uu, tail, bend right = 66, "\theta"]
\end{tikzcd}
$$
where the functor\/ $\Hot(\sA_\inj)\rarrow\sD(\sE)$ is induced by
the inclusion\/ $\sA_\inj\rarrow\sE$, the functors shown by two-headed
arrows are Verdier quotient functors, the long downwards directed
curvilinear arrow is the composition of two straight downwards directed
arrows, and the upwards directed curvilinear arrows with a tail show
fully faithful right adjoint functors to the downwards directed arrows.
 The functor $\rho\:\sD(\sA)\rarrow\sD(\sE)$ is the composition\/
$\sD(\sA)\overset\theta\rarrow \sD(\sA_\inj)\rarrow\sD(\sE)$. \par
\textup{(b)} Let\/ $\sB$ be an exact category with enough homotopy
projective complexes of projectives, and let\/ $\sF\subset\sB$ be
a resolving subcategory containing the projective objects.
 Then the triangulated functor\/ $\sD(\sF)\rarrow\sD(\sB)$ induced
by the inclusion of exact categories\/ $\sF\rarrow\sB$ is a Verdier
quotient functor, and it has a left adjoint functor
$\lambda\:\sD(\sB)\rarrow\sD(\sF)$.
 Moreover, there is a diagram of triangulated functors
$$
\begin{tikzcd}
\Hot(\sB_\proj) \arrow[d] \arrow[dd, two heads, bend left = 50] \\
\sD(\sF) \arrow[d, two heads] \\
\sD(\sB) \arrow[u, tail, bend left = 51, "\lambda"']
\arrow[uu, tail, bend left = 66, "\varkappa"']
\end{tikzcd}
$$
where the functor\/ $\Hot(\sB_\proj)\rarrow\sD(\sF)$ is induced by
the inclusion\/ $\sB_\proj\rarrow\sF$, the functors shown by two-headed
arrows are Verdier quotient functors, the long downwards directed
curvilinear arrow is the composition of two straight downwards directed
arrows, and the upwards directed curvilinear arrows with a tail show
fully faithful left adjoint functors to the downwards directed arrows.
 The functor $\lambda\:\sD(\sB)\rarrow\sD(\sF)$ is the composition\/
$\sD(\sB)\overset\varkappa\rarrow\sD(\sB_\proj)\rarrow\sD(\sF)$.
\end{thm}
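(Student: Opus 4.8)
The two parts are dual to one another, so I will describe the plan for part~(a); part~(b) is obtained by passing to the opposite categories, replacing injectives, homotopy injective complexes and the right adjoint $\theta$ by projectives, homotopy projective complexes and the left adjoint $\varkappa$, and setting $\lambda=q_\sF\circ\varkappa$. The guiding idea is that, although $\sE$ need not be closed under coproducts and may therefore fail to have enough homotopy injective complexes of injectives of its own, one can still \emph{borrow} the homotopy injective resolutions supplied by the ambient category~$\sA$, precisely because the injective objects of $\sA$ already lie in~$\sE$.

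First I would record the basic compatibilities. Since $\sE$ contains the injective objects, we have $\sA_\inj\subset\sE$, so the inclusion $\sA_\inj\rarrow\sE$ induces a triangulated functor $q_\sE\:\Hot(\sA_\inj)\rarrow\sD(\sE)$; write $q_\sA\:\Hot(\sA_\inj)\rarrow\sD(\sA)$ for the analogous functor and $\bar q\:\sD(\sE)\rarrow\sD(\sA)$ for the functor induced by $\sE\rarrow\sA$, so that $q_\sA=\bar q\circ q_\sE$ on the nose. The crucial observation is that every admissibly acyclic complex in the exact category $\sE$ is acyclic in~$\sA$; hence $\Hot(\sE)\rarrow\Hot(\sA)$ carries $\Acycl(\sE)$ into $\Acycl(\sA)$, and consequently any complex with terms in $\sE$ that is homotopy injective in $\sA$ is also homotopy injective in~$\sE$. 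In particular, for each $M^\bu\in\sD(\sA)$ the complex $\theta M^\bu$, which has its terms in $\sA_\inj\subset\sE$ and is homotopy injective in~$\sA$, is homotopy injective in~$\sE$ as well.

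Next I would define the candidate right adjoint as $\rho=q_\sE\circ\theta\:\sD(\sA)\rarrow\Hot(\sA_\inj)\rarrow\sD(\sE)$, exactly as in the statement, and verify the adjunction $\bar q\dashv\rho$ by a direct computation of morphism groups. For $E^\bu\in\sD(\sE)$ and $M^\bu\in\sD(\sA)$, writing $J^\bu=\theta M^\bu$, the chain
\begin{align*}
\Hom_{\sD(\sE)}(E^\bu,\rho M^\bu)
&=\Hom_{\sD(\sE)}(E^\bu,q_\sE J^\bu) \\
&=\Hom_{\Hot(\sE)}(E^\bu,J^\bu) \\
&=\Hom_{\Hot(\sA)}(E^\bu,J^\bu) \\
&=\Hom_{\sD(\sA)}(E^\bu,M^\bu) \\
&=\Hom_{\sD(\sA)}(\bar q E^\bu,M^\bu)
\end{align*}
does the job: the first equality is the definition of~$\rho$; the second uses that $J^\bu$ is homotopy injective in~$\sE$; the third that $\Hot(\sE)\rarrow\Hot(\sA)$ is fully faithful; the fourth that $J^\bu$ is homotopy injective in~$\sA$ and represents $M^\bu$ there (so $q_\sA J^\bu\cong M^\bu$); and the last is the definition of~$\bar q$. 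Naturality in both arguments is clear. Notably, this computation never requires $\sE$ to carry homotopy injective resolutions internally — it is exactly here that the hypothesis on $\sA$, transported along $\sA_\inj\subset\sE$, is exploited.

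Finally I would deduce the remaining assertions formally. The counit $\bar q\circ\rho\rarrow\Id$ evaluated at $M^\bu$ is the morphism $q_\sA J^\bu\rarrow M^\bu$, i.e.\ the homotopy injective resolution of~$M^\bu$, which is an isomorphism in~$\sD(\sA)$; hence $\rho$ is fully faithful, and a triangulated functor admitting a fully faithful right adjoint is a Verdier quotient functor, which settles the claim about~$\bar q$. The identical reasoning applied to $q_\sA$ with right adjoint $\theta$ (whose counit is again a homotopy injective resolution) shows that $\theta$ is fully faithful and that $\Hot(\sA_\inj)\rarrow\sD(\sA)$ is a Verdier quotient, equal by construction to $\bar q\circ q_\sE$, and that $\rho=q_\sE\circ\theta$ is the asserted composition. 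I expect the only genuinely delicate point to be the transfer-of-homotopy-injectivity step of the second paragraph — the inclusion $\Acycl(\sE)\subset\Acycl(\sA)$ and its consequence for right orthogonality — since the naive alternative, namely proving directly that $\sE$ has enough homotopy injective complexes of injectives by checking that an $\sA$-injective resolution of a complex over~$\sE$ is an $\sE$\+quasi-isomorphism, founders on the fact that the cocycles of the cone need not lie in~$\sE$ for unbounded complexes; the Hom-computation above circumvents this obstruction entirely.
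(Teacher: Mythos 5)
Your argument is correct and is essentially the paper's own proof: the paper merely packages your Hom-group computation into a general lemma about nested triangulated subcategories (applied with $\sH=\Hot(\sA)$, $\sG=\Hot(\sE)$, $\sX=\Acycl(\sA)$, $\sY=\Acycl(\sE)$), where your transfer-of-homotopy-injectivity step appears as the observation that the image of $\theta$ is right orthogonal to $\sY$ because $\sY\subset\sX$. Nothing is missing.
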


\begin{proof}
 We will prove part~(a), as part~(b) is dual.
 We have already explained how the functors are constructed.
 Any triangulated functor with a fully faithful adjoint is a Verdier
quotient functor, and conversely any triangulated functor adjoint
to a Verdier quotient functor is
fully faithful~\cite[Proposition~I.1.3]{GZ}.

 The adjunction between the two functors $\Hot(\sA_\inj)\rarrow
\sD(\sA)$ and $\sD(\sA)\overset\theta\rarrow\Hot(\sA_\inj)$ is obtained
by restricting the adjunction between the two functors $\Hot(\sA)
\rarrow\sD(\sA)$ and $\sD(\sA)\rarrow\Hot(\sA)$ to the full
subcategory $\Hot(\sA_\inj)\subset\Hot(\sA)$.
 Since the image of the functor $\sD(\sA)\rarrow\Hot(\sA)$ is
contained in $\Hot(\sA_\inj)\subset\Hot(\sA)$, such restriction
makes sense.
 The functor $\Hot(\sA)\rarrow\sD(\sA)$ is a Verdier quotient functor,
hence the functor $\sD(\sA)\rarrow\Hot(\sA)$ is fully faithful, so
the functor $\sD(\sA)\overset\theta\rarrow\Hot(\sA_\inj)$ is fully
faithful as well, and it follows that the functor $\Hot(\sA_\inj)
\rarrow\sD(\sA)$ is a Verdier quotient functor. {\hbadness=1100\par}

 It remains to show that the functor $\rho\:\sD(\sA)\rarrow\sD(\sE)$
is fully faithful and right adjoint to the triangulated functor
$\sD(\sE)\rarrow \sD(\sA)$ induced by the inclusion $\sE\rarrow\sA$;
then it will follow that the latter functor is a Verdier quotient
functor.
 For this purpose, we decompose the functor $\Hot(\sA_\inj)\rarrow
\sD(\sE)$ as $\Hot(\sA_\inj)\rarrow\Hot(\sE)\rarrow\sD(\sE)$, where
$\Hot(\sA_\inj)\rarrow\Hot(\sE)$ is the functor induced by
the inclusion of additive categories $\sA_\inj\rarrow\sE$ and
$\Hot(\sE)\rarrow\sD(\sE)$ is the canonical Verdier quotient functor.
 Then the functor~$\rho$ decomposes as
$$
 \sD(\sA)\overset\theta\lrarrow\Hot(\sA_\inj)\lrarrow
 \Hot(\sE)\lrarrow\sD(\sE).
$$

 For simplicity, in the following lemma we denote the compositions
$\sD(\sA)\overset\theta\rarrow\Hot(\sA_\inj)\rarrow\Hot(\sE)$ and
$\sD(\sA)\overset\theta\rarrow\Hot(\sA_\inj)\rarrow\Hot(\sA)$
also by~$\theta$.

\begin{lem} \label{triangulated-lemma}
 Let\/ $\sH$ be a triangulated category and\/ $\sY\subset\sG\subset
\sH$, \,$\sY\subset\sX\subset\sH$ be its full triangulated subcategories.
 Suppose that the Verdier quotient functor\/ $\sH\rarrow\sH/\sX$ has
a right adjoint $\theta\:\sH/\sX\rarrow\sH$, whose image is contained
in\/~$\sG$.
 Then the triangulated functor\/ $\sG/\sY\rarrow\sH/\sX$ induced by
the inclusion $(\sG,\sY)\hookrightarrow(\sH,\sX)$ is a Verdier quotient
functor with a right adjoint functor~$\rho$, which can be computed as
the composition\/ $\sH/\sX\overset\theta\rarrow\sG\rarrow\sG/\sY$.
\end{lem}

\begin{proof}
 The functor~$\theta$ is fully faithful as an adjoint to a Verdier
quotient functor.
 Denote by~$\rho$ the composition $\sH/\sX\rarrow\sG\rarrow\sG/\sY$;
so $\rho(C)=\theta(C)/\sY\in\sG/\sY$ for all $C\in\sH/\sX$.
 To prove that the functor~$\rho$ is fully faithful, it
suffices to check that all the objects in the image of~$\theta$ are
right orthogonal to $\sY$, that is $\Hom_\sG(Y,\theta(C))=0$
for all $C\in\sH/\sX$ and $Y\in\sY$.
 Indeed, we have $\Hom_\sH(X,\theta(C))=0$ for all $X\in\sX$, as
the image of $X$ in $\sH/\sX$ vanishes.

 It remains to show that~$\rho$ is right adjoint to the functor
$\sG/\sY\rarrow\sH/\sX$; it will then follow that the latter is
a Verdier quotient functor.
 Let $E\in\sG$ and $C\in\sH/\sX$ be two objects.
 Then we have
\begin{multline*}
 \Hom_{\sG/\sY}(E/\sY,\rho(C))=\Hom_{\sG/\sY}
 (E/\sY,\theta(C)/\sY)\simeq\Hom_\sG(E,\theta(C)) \\ = 
 \Hom_\sH(E,\theta(C))\simeq\Hom_{\sH/\sX}(E/\sX,C),
\end{multline*}
since $\theta(C)$ is right orthogonal to~$\sY$.
\end{proof}

 To finish the proof of part~(a), it remains to set
$$
 \sH=\Hot(\sA)\,\supset\,\sG=\Hot(\sE)
$$
and denote further by $\sX\subset\sH$ the full subcategory of acyclic
complexes in the exact category $\sA$ and by $\sY\subset\sX\cap\sG$
the full subcategory of acyclic complexes in the exact category~$\sE$.
 So $\sH/\sX=\sD(\sA)$ and $\sG/\sY=\sD(\sE)$.
 The right adjoint functor to the Verdier quotient functor
$\Hot(\sA)\rarrow\sD(\sA)$ lands inside $\Hot(\sA_\inj)\subset
\Hot(\sE)\subset\Hot(\sA)$, as required in the lemma.
\end{proof}

\Section{Quasi-Finiteness Conditions for Coalgebras}
\label{quasi-finite-secn}

 We refer to the classical book~\cite{Swe}, the introductory section
and appendix~\cite[Section~0.2 and Appendix~A]{Psemi},
the memoir~\cite{Pkoszul}, the overview~\cite{Prev},
the paper~\cite{Pmc}, and the references therein for a general
discussion of coassociative coalgebras over fields and module objects
(comodules and contramodules) over them.

 Let $k$~be a fixed ground field and $\C$ be a coassociative
coalgebra (with counit) over~$k$.
 We denote by $\C\comodl$ and $\comodr\C$ the abelian categories of
left and right $\C$\+comodules.
 The abelian category of left $\C$\+contramodules is denoted by
$\C\contra$.

 For any two left $\C$\+comodules $\M$ and $\N$, we denote by
$\Hom_\C(\M,\N)$ the $k$\+vector space of left $\C$\+comodule morphisms
$\M\rarrow\N$.
 For any two left $\C$\+contramodules $\fS$ and $\fT$, we denote by
$\Hom^\C(\fS,\fT)$ the $k$\+vector space of left $\C$\+contramodule
morphisms $\fS\rarrow\fT$.
 The coalgebra opposite to $\C$ is denoted by $\C^\rop$; so a right
$\C$\+comodule is the same thing as a left $\C^\rop$\+comodule.

 We recall that for any right $\C$\+comodule $\N$ and $k$\+vector
space $V$ the vector space $\Hom_k(\N,V)$ has a natural left
$\C$\+contramodule structure~\cite[Sections~1.1\+-2]{Prev}.
 We refer to~\cite[Section~2]{Pmc}, \cite[Section~3.1]{Prev},
\cite[Section~2.2]{Pkoszul}, or~\cite[Sections~0.2.6
and~5.1.1\+-2]{Psemi} for the definition and discussion of
the functor of \emph{contratensor product} $\N\ocn_\C\fT$ of
a right $\C$\+comodule $\N$ and a left $\C$\+contramodule~$\fT$.

 The construction of the \emph{cotensor product} $\N\oc_\C\M$ of
a right $\C$\+comodule $\N$ and a left $\C$\+comodule $\M$ goes
back at least to the paper~\cite[Section~2]{MM}.
 The dual-analogous construction involving contramodules is
the vector space of \emph{cohomomorphisms} $\Cohom_\C(\M,\fT)$
from a left $\C$\+comodule $\M$ to a left $\C$\+contramodule~$\fT$.
 We refer to~\cite[Section~2]{Pmc}, \cite[Sections~2.5\+-6]{Prev},
or~\cite[Sections~0.2.1, 0.2.4, 1.2.1, and~3.2.1]{Psemi} for
the definitions and discussion of these constructions.

\medskip

 Finiteness and quasi-finiteness conditions for coalgebras and
comodules were studied in~\cite{Tak,WW,GTNT,Pmc} and many other papers.
 The next two lemmas are well-known and included here for the reader's
convenience.

 Given a subcoalgebra $\B\subset\C$ and a left $\C$\+comodule $\M$,
let ${}_\B\M\subset\M$ denote the maximal $\C$\+subcomodule in $\M$
whose $\C$\+comodule structure comes from a $\B$\+comodule structure.
 The $\B$\+comodule ${}_\B\M$ can be computed as the full preimage
of the subcomodule $\B\ot_k\M\subset\C\ot_k\M$ under the coaction map
$\M\rarrow\C\ot_k\M$, or as the cotensor product $\B\oc_\C\M$\,
\cite[Section~2]{Pmc}.

\begin{lem} \label{qf-cogenerated-comodule-definition-lem}
 Let\/ $\C$ be a coassociative coalgebra over~$k$ and\/ $\M$ be
a left\/ $\C$\+comodule.
 Then the following four conditions are equivalent:
\begin{itemize}
\item for any finite-dimensional subcoalgebra\/ $\B\subset\C$,
the $k$\+vector space\/ ${}_\B\M$ is finite-dimensional;
\item for any cosimple subcoalgebra\/ $\A\subset\C$,
the $k$\+vector space\/ ${}_\A\M$ is finite-dimensional;
\item for any finite-dimensional left\/ $\C$\+comodule\/ $\K$,
the $k$\+vector space\/ $\Hom_\C(\K,\M)$ is finite-dimensional.
\item for any irreducible left\/ $\C$\+comodule\/ $\I$,
the $k$\+vector space\/ $\Hom_\C(\I,\M)$ is finite-dimensional.
\end{itemize}
\end{lem}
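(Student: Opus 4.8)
The plan is to prove the four conditions equivalent through a single cyclic chain of implications; I label them \textup{(i)}, \textup{(ii)}, \textup{(iii)}, \textup{(iv)} in the order listed and establish $\textup{(i)}\Rightarrow\textup{(iii)}\Rightarrow\textup{(iv)}\Rightarrow\textup{(ii)}\Rightarrow\textup{(i)}$. Three standard structural facts about coalgebras over a field underlie everything: every $\C$\+comodule is the directed union of its finite-dimensional subcomodules; every nonzero coalgebra contains a nonzero finite-dimensional subcoalgebra, so that in particular every cosimple subcoalgebra and every irreducible comodule is finite-dimensional; and a finite-dimensional coalgebra $\B$ has only finitely many cosimple subcoalgebras. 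I would also freely use the identification, recalled before the lemma, of ${}_\B\M$ with $\B\oc_\C\M$.

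First I would dispatch the three easier implications. For $\textup{(i)}\Rightarrow\textup{(iii)}$, a finite-dimensional comodule $\K$ is a comodule over a finite-dimensional \emph{coefficient subcoalgebra} $\B_\K\subset\C$; since the image of any morphism $\K\rarrow\M$ is a $\B_\K$\+comodule, it lands in ${}_{\B_\K}\M$, so that $\Hom_\C(\K,\M)=\Hom_{\B_\K}(\K,{}_{\B_\K}\M)$ embeds into $\Hom_k(\K,{}_{\B_\K}\M)$, which is finite-dimensional by~\textup{(i)}. The implication $\textup{(iii)}\Rightarrow\textup{(iv)}$ is immediate, as an irreducible comodule is finite-dimensional. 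For $\textup{(iv)}\Rightarrow\textup{(ii)}$, a cosimple subcoalgebra $\A$ has (up to isomorphism) a unique irreducible comodule $\I_\A$, and the category of $\A$\+comodules is semisimple; hence ${}_\A\M$, being an $\A$\+comodule, is a direct sum $\I_\A^{(\mu)}$, and the computation $\Hom_\C(\I_\A,\M)=\Hom_\A(\I_\A,\I_\A^{(\mu)})=\operatorname{End}_\A(\I_\A)^{(\mu)}$ shows that finite-dimensionality of the left-hand side forces $\mu<\infty$, i.e.\ ${}_\A\M$ is finite-dimensional.

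The implication $\textup{(ii)}\Rightarrow\textup{(i)}$ is where the real work lies, and I expect it to be the main obstacle. Fix a finite-dimensional subcoalgebra $\B\subset\C$ and set $N={}_\B\M$. Every simple subcomodule of $N$ has a cosimple coefficient coalgebra $\A\subset\B$ and hence lies in ${}_\A\M$, while each ${}_\A\M$ is semisimple; distinct cosimple $\A$ give isotypic components of different types, so the socle of $N$ is the direct sum $\bigoplus_{\A\subset\B}{}_\A\M$ over the finitely many cosimple subcoalgebras of $\B$, and this is finite-dimensional by~\textup{(ii)}. Now $\B$ is a finite-dimensional injective cogenerator of the category of $\B$\+comodules, so the socle of $N$ is essential in $N$ and the injective envelope of a finite-dimensional $\B$\+comodule, being a direct summand of a finite power of $\B$, is again finite-dimensional. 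Therefore $N$ embeds into the injective envelope of its (essential) socle, which is finite-dimensional, whence $N={}_\B\M$ is finite-dimensional. The points needing careful verification are the identification of the socle of ${}_\B\M$ with $\bigoplus_\A{}_\A\M$ and the finite-dimensionality of injective envelopes over a finite-dimensional coalgebra; granting these, the chain closes and all four conditions are equivalent.
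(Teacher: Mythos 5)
Your proof is correct and follows essentially the same route as the paper, which merely sketches the argument by reducing to the statement that a module over a finite-dimensional algebra is finite-dimensional if and only if its socle is; your implication \textup{(ii)}$\Rightarrow$\textup{(i)} via the essential socle and finite-dimensional injective envelopes is a standard fleshing-out of exactly that reduction, and the remaining implications match the structural facts the paper cites from~\cite{Swe}.
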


\begin{proof}
 This is essentially a statement about comodules over finite-dimensional
coalgebras, or which is the same thing, modules over finite-dimensional
algebras.
 Basically, the assertion is that a module over a finite-dimensional
algebra is finite-dimensional if and only if its socle is
finite-dimensional.

 We refer to~\cite[Section~2]{Swe} for the background material.
 In particular, one should keep in mind that $\C$ is the union of its
finite-dimensional subcoalgebras, all $\C$\+comodules are the unions of
their finite-dimensional subcomodules, and all finite-dimensional
$\C$\+comodules are comodules over finite-dimensional subcoagebras of
$\C$; so irreducible left $\C$\+comodules correspond bijectively to
cosimple subcoalgebras in~$\C$.
\end{proof}

 We will say that a left $\C$\+comodule $\M$ is \emph{quasi-finitely
cogenerated} if it satisfies the equivalent conditions of
Lemma~\ref{qf-cogenerated-comodule-definition-lem}.
 (Such comodules were called ``quasi-finite'' in~\cite{Tak,GTNT}.)
 Recall that a left $\C$\+comodule is said to be
\emph{finitely cogenerated}~\cite{Tak,WW,Pmc} if it is a subcomodule of
a cofree left $\C$\+comodule $\C\ot_k V$ with a finite-dimensional
vector space of cogenerators~$V$.
 Any finitely cogenerated $\C$\+comodule is quasi-finitely cogenerated,
while the cofree left $\C$\+comodule $\C\ot_kV$ cogenerated by
an infinite-dimensional $k$\+vector space $V$ is not quasi-finitely
cogenerated when $\C\ne0$.

 One can see from~\cite[Lemma~2.2(e)]{Pmc} that the classes of
finitely cogenerated and quasi-finitely cogenerated left
$\C$\+comodules coincide if and only if the maximal cosemisimple
subcoalgebra $\C^\ss$ of the coalgebra $\C$ is finite-dimensional
(cf.~\cite[Proposition~1.6 in journal version or Proposition~2.5 in
\texttt{arXiv} version]{GTNT}).
 Unlike the finite cogeneratedness condition, the quasi-finite
cogeneratedness condition on comodules is \emph{Morita invariant},
i.~e., it is preserved by equivalences of the categories of comodules
$\C\comodl\simeq\D\comodl$ over different coalgebras $\C$ and
$\D$\,~\cite{Tak}.

\begin{lem} \label{qf-cogenerated-comodules-properties}
\textup{(a)} The class of all quasi-finitely cogenerated left\/
$\C$\+comodules is closed under extensions and the passages to
arbitrary subcomodules. \par
\textup{(b)} Any quasi-finitely cogenerated\/ $\C$\+comodule is
a subcomodule of a quasi-finitely cogenerated injective\/
$\C$\+comodule. \qed
\end{lem}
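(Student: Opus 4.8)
The plan is to verify both parts by working with the fourth of the equivalent conditions of Lemma~\ref{qf-cogenerated-comodule-definition-lem}, namely finite-dimensionality of $\Hom_\C(\I,\M)$ for every irreducible left $\C$\+comodule~$\I$, and to reduce every such $\Hom$ space to the socle (maximal semisimple subcomodule) of the comodule involved. The pivotal observation throughout is that $\Hom_\C(\I,\M)=\Hom_\C(\I,\operatorname{soc}(\M))$: since the image of a morphism from an irreducible comodule is either zero or irreducible, it always lands in $\operatorname{soc}(\M)$, so the inclusion $\operatorname{soc}(\M)\hookrightarrow\M$ induces an isomorphism on $\Hom_\C(\I,{-})$.

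For part~(a) I would use the left exactness of the functors $\Hom_\C(\I,{-})$. Given a subcomodule $\M'\subset\M$ with $\M$ quasi-finitely cogenerated, the monomorphism $\M'\hookrightarrow\M$ induces an injection $\Hom_\C(\I,\M')\hookrightarrow\Hom_\C(\I,\M)$ for each irreducible~$\I$, so $\Hom_\C(\I,\M')$ is finite-dimensional and $\M'$ is quasi-finitely cogenerated. Given a short exact sequence $0\rarrow\M'\rarrow\M\rarrow\M''\rarrow0$ with $\M'$ and $\M''$ quasi-finitely cogenerated, left exactness produces an exact sequence $0\rarrow\Hom_\C(\I,\M')\rarrow\Hom_\C(\I,\M)\rarrow\Hom_\C(\I,\M'')$, whence $\dim_k\Hom_\C(\I,\M)\le\dim_k\Hom_\C(\I,\M')+\dim_k\Hom_\C(\I,\M'')<\infty$ for every~$\I$. (One could argue equivalently with the left exact functors ${}_\A({-})=\A\oc_\C{-}$ attached to the cosimple subcoalgebras $\A\subset\C$, using the second condition of Lemma~\ref{qf-cogenerated-comodule-definition-lem}.)

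For part~(b) I would take the injective envelope $E$ of $\M$ in the (locally finite) Grothendieck category $\C\comodl$, which is an injective $\C$\+comodule containing $\M$ as an essential subcomodule. Essentiality forces $\operatorname{soc}(E)=\operatorname{soc}(\M)$: every irreducible subcomodule of $E$ is nonzero, hence meets $\M$ nontrivially, and therefore, being irreducible, is contained in $\M$ and so in $\operatorname{soc}(\M)$; the reverse inclusion is clear. Combining this with the socle reduction above gives $\Hom_\C(\I,E)=\Hom_\C(\I,\operatorname{soc}(E))=\Hom_\C(\I,\operatorname{soc}(\M))=\Hom_\C(\I,\M)$, which is finite-dimensional for every irreducible~$\I$. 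Thus $E$ is quasi-finitely cogenerated by Lemma~\ref{qf-cogenerated-comodule-definition-lem}, it is injective, and $\M\subset E$ is the desired embedding.

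The left-exactness bookkeeping in~(a) and the existence of injective envelopes in $\C\comodl$ are routine; the single step that carries the argument of~(b) is the combination of the reduction $\Hom_\C(\I,{-})=\Hom_\C(\I,\operatorname{soc}({-}))$ with the identity $\operatorname{soc}(E)=\operatorname{soc}(\M)$. The point is precisely that passing to the injective envelope does not enlarge the socle, so no control over the multiplicities of $\M$ beyond what the hypothesis already supplies is needed.
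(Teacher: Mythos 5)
Your proof is correct and follows essentially the same route as the paper: left exactness of the relevant functor for part~(a), and the observation that passing to the injective envelope does not enlarge the socle for part~(b). The only cosmetic difference is that you phrase everything via the $\Hom_\C(\I,{-})$ form of the quasi-finiteness condition, whereas the paper uses the equivalent ${}_\B\M$ form — an equivalence you yourself note.
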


\begin{proof}
 To prove part~(a), notice that the functor $\M\longmapsto{}_\B\M$ is
left exact for any subcoalgebra $\B\subset\C$.
 In part~(b), it suffices to say that the injective envelope of
a quasi-finitely cogenerated comodule is quasi-finitely cogenerated.
 Indeed, if $\M$ is a left $\C$\+comodule, $\J$ is an injective
envelope of $\M$, and $\A$ is a co(semi)simple subcoalgebra in $\C$,
then any $\A$\+subcomodule in $\J$ is contained in~$\M$.
\end{proof}

 The following result can be found in~\cite[Theorem~2.1 in journal
version or Theorem~3.1 in \texttt{arXiv} version]{GTNT}.

\begin{prop} \label{quasi-co-Noetherian-prop}
 Let\/ $\C$ be a coassociative coalgebra over a field~$k$.
 Then the following four conditions are equivalent:
\begin{itemize}
\item any quotient comodule of a quasi-finitely cogenerated
left\/ $\C$\+comodule is quasi-finitely cogenerated;
\item any quotient comodule of a quasi-finitely cogenerated injective
left\/ $\C$\+comodule is quasi-finitely cogenerated;
\item any quotient comodule of a finitely cogenerated left\/
$\C$\+comodule is quasi-finitely cogenerated;
\item any quotient comodule of the left\/ $\C$\+comodule $\C$ is
quasi-finitely cogenerated.
\end{itemize}
\end{prop}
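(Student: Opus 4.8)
The plan is to deduce the four conditions from one another, keeping the formal reductions strictly separate from the one genuinely infinite-dimensional step. Label the four assertions (I)--(IV) in the order listed. Three implications are immediate from the material already recorded: since a quasi-finitely cogenerated injective comodule is in particular quasi-finitely cogenerated, (I)$\Rightarrow$(II); since a finitely cogenerated comodule is quasi-finitely cogenerated, (I)$\Rightarrow$(III); and since the left $\C$\+comodule $\C=\C\ot_k k$ is the cofree comodule on a one-dimensional space, hence finitely cogenerated, (III)$\Rightarrow$(IV). The two tools I would use for the remaining implications are the pushout square in an abelian category (the pushout of a monomorphism is a monomorphism and the pushout of an epimorphism is an epimorphism) and the closure of quasi-finitely cogenerated comodules under extensions and subcomodules from Lemma~\ref{qf-cogenerated-comodules-properties}(a).

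For (II)$\Rightarrow$(I): given a quasi-finitely cogenerated $\M$ and a quotient $\M\twoheads\N$, embed $\M\hookrightarrow\J$ into a quasi-finitely cogenerated injective comodule by Lemma~\ref{qf-cogenerated-comodules-properties}(b) and form the pushout $P$ of $\J\hookleftarrow\M\twoheads\N$. Then $\J\twoheads P$ is a quotient, so $P$ is quasi-finitely cogenerated by (II), while $\N\hookrightarrow P$ is a monomorphism, so $\N$ is quasi-finitely cogenerated by Lemma~\ref{qf-cogenerated-comodules-properties}(a). For (IV)$\Rightarrow$(III): an easy induction using closure under extensions upgrades (IV) to the statement that every quotient of $\C^{\oplus n}$ is quasi-finitely cogenerated (split off the first summand, whose image is a quotient of $\C$, and note the remaining quotient is a quotient of $\C^{\oplus(n-1)}$); then, for a finitely cogenerated $\M\hookrightarrow\C\ot_k V$ with $\dim_k V=n$, the same pushout argument embeds any quotient of $\M$ into a quotient of $\C^{\oplus n}$. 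At this point (I)$\Leftrightarrow$(II) and (III)$\Leftrightarrow$(IV), and it remains to bridge the two groups by proving, say, (III)$\Rightarrow$(II).

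The bridge is the crux. Using that $\C\comodl$ is locally finite, write a quasi-finitely cogenerated injective $\J$ as a direct sum of indecomposable injectives and group them by the isomorphism type of their simple socles; quasi-finite cogeneratedness means each group is finite, so $\J=\bigoplus_s\J_s$ with every $\J_s$ a finitely cogenerated injective. For a quotient $\N=\J/\K$ and an irreducible comodule $T$, the injectivity of $\J$ and the compactness of $T$ give the four-term exact sequence
\[
0\rarrow\Hom_\C(T,\K)\rarrow\Hom_\C(T,\J)\rarrow\Hom_\C(T,\N)\rarrow\Ext^1_\C(T,\K)\rarrow0,
\]
in which $\Hom_\C(T,\J)$ reduces to the single finitely cogenerated summand $\J_s$ whose socle is of type $T$, and is therefore finite-dimensional. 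By the last criterion of Lemma~\ref{qf-cogenerated-comodule-definition-lem}, $\N$ is quasi-finitely cogenerated if and only if $\Hom_\C(T,\N)$ is finite-dimensional for every $T$, which by the sequence is equivalent to finite-dimensionality of $\Ext^1_\C(T,\K)$ for every $T$. Here (III) controls the finite stages: since $T$ is finite-dimensional and the category is locally finite, $\Ext^1_\C(T,-)$ commutes with the filtered colimit $\K=\varinjlim_{S_0}\bigl(\K\cap\bigoplus_{s\in S_0}\J_s\bigr)$ over finite index sets $S_0$, and each $\K\cap\bigoplus_{s\in S_0}\J_s$ sits inside the finitely cogenerated injective $\bigoplus_{s\in S_0}\J_s$, whose corresponding quotient is quasi-finitely cogenerated by (III); hence each $\Ext^1_\C\bigl(T,\K\cap\bigoplus_{s\in S_0}\J_s\bigr)$ is finite-dimensional. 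The main obstacle is passing from these finite stages to the colimit: one needs a bound on these dimensions that is \emph{uniform} in $S_0$ (equivalently, a uniform bound on the multiplicity of $T$ in the socle of the quotients of the finite partial sums), so that the socle of a quotient cannot grow without bound as the infinite direct sum is exhausted. This uniform finiteness --- the only step that is not a formal manipulation --- is precisely the content of the cited theorem of~\cite{GTNT}, and it is what I would expect to occupy the real work of the proof.
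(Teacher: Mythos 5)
Your formal reductions are all fine and amount to what the paper dismisses as the implications that are ``obvious in view of Lemma~\ref{qf-cogenerated-comodules-properties}(b)'': the pushout trick for (II)$\Rightarrow$(I), the induction for (IV)$\Rightarrow$(III), and the trivial implications among (I), (III), (IV). The bridge between the two groups is where the proposition actually lives, and there you have a genuine gap. You reduce (III)$\Rightarrow$(II) to a uniform bound on $\dim_k\Ext^1_\C\bigl(T,\,\K\cap\bigoplus_{s\in S_0}\J_s\bigr)$ over the finite index sets $S_0$, and then assert that this uniform finiteness ``is precisely the content of the cited theorem of~\cite{GTNT}.'' But the cited theorem of~\cite{GTNT} \emph{is} the present proposition, so this is circular: nothing in your argument closes the colimit, and a filtered colimit of finite-dimensional $\Ext^1$ spaces can of course be infinite-dimensional.

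The paper closes exactly this gap by a different mechanism, proving (IV)$\Rightarrow$(II) rather than (III)$\Rightarrow$(II). One decomposes a quasi-finitely cogenerated injective $\J$ into indecomposable injectives $\J_a$, well-orders the summands, and uses the induced ordinal-indexed filtration on a quotient $\M=\J/\L$ (together with left exactness of $\M\longmapsto{}_\B\M$) to reduce to quotients of the form $\bigoplus_a\M_a$ with each $\M_a$ a quotient of~$\J_a$. Condition~(II) then becomes equivalent to the conjunction of: (${*}$)~for each cosimple $\B$ and each quotient $\M$ of an indecomposable injective $\J_i$, the space ${}_\B\M$ is finite-dimensional; and (${*}{*}$)~for each cosimple $\B$, only finitely many isomorphism classes of indecomposable injectives $\J_i$ admit a quotient $\M$ with ${}_\B\M\ne0$. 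Condition~(${*}{*}$) is exactly the uniformity you are missing. The key observation supplying it is that the direct sum $\bigoplus_i\J_i$ of \emph{all} indecomposable injective left $\C$\+comodules, one copy of each isomorphism class, is a direct summand of the comodule $\C$ itself; hence if either (${*}$) or (${*}{*}$) failed, one could assemble a single quotient comodule of $\C$ that is not quasi-finitely cogenerated, contradicting~(IV). This is the step your Ext-colimit argument cannot reach: working inside one fixed injective $\J$ never lets you play the different socle types off against each other inside a single quotient of~$\C$, which is what forces the multiplicities to stay bounded.
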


\begin{proof}
 We will prove that the fourth condition implies the second one
(the other implications being obvious in view of
Lemma~\ref{qf-cogenerated-comodules-properties}(b)).

 As in any locally Noetherian Grothendieck abelian category, every
injective $\C$\+comodule is a direct sum of indecomposable injectives.
 In fact, the category $\C\comodl$ is even locally finite (any object
is the union of its subobjects of finite length); hence an injective
$\C$\+comodule is indecomposable if and only if its socle is irreducible.
 The correspondence assigning to an injective left $\C$\+comodule $\J$
its socle $\K$ restricts to a bijection between the isomorphism classes
of indecomposable injective left $\C$\+comodules $\J_i$ and
the isomorphism classes of irreducible left $\C$\+comodules~$\I_i$.
 The latter corerespond bijectively to the cosimple subcoalgebras
$\A_i\subset\C$.
 It follows that an injective left $\C$\+comodule $\J$ is quasi-finitely
cogenerated if and only if its socle $\K$ contains any irreducible
left $\C$\+comodule $\I_i$ with at most finite multiplicity.

 Let $\J$ be a quasi-finitely cogenerated injective left $\C$\+comodule.
 Consider a decomposition of $\J$ ito a direct sum of indecomposable
injective left $\C$\+comodules $\J_a$, choose a well-ordering of
the set of indices~$a$, and consider the ordinal-indexed increasing
filtration of $\J$ associated with this direct sum decomposition and
this ordering of the summands.
 Then any subcomodule $\L$ and any quotient comodule $\M$ of
the $\C$\+comodule $\J$ acquires the induced increasing filtration
indexed by the same ordinal.
 The successive quotient comodules $\M_a$ of the induced filtration
on $\M$ are certain quotients of the indecomposable injectives~$\J_a$.

 This argument shows that it suffices to check quasi-finite
cogeneratedness of the quotient comodules $\M=\J/\L$ of the form
$\M=\bigoplus_a\M_a$, where $\M_a$ are certain quotient comodules
of~$\J_a$.
 We arrive to the following criterion.
 All quotient comodules of quasi-finitely cogenerated injective left
$\C$\+comodules are quasi-finitely cogenerated if and only if both
of the next two conditions hold:
\begin{itemize}
\item[(${*}$)] for any cosimple subcoalgebra $\B\subset\C$ and
any quotient comodule $\M$ of an indecomposable injective
left  $\C$\+comodule $\J_i$, the subcomodule ${}_\B\M\subset\M$ is
finite-dimensional;
\item[(${*}{*}$)] for any cosimple subcoalgebra $\B\subset\C$, there
exist at most finite number of isomorphism classes of indecomposable
injective left $\C$\+comodules $\J_i$ for which $\J_i$ has a quotient
comodule $\M$ with ${}_\B\M\ne0$.
\end{itemize}

 Now let us consider the direct sum $\J=\bigoplus_i\J_i$ of all
the indecomposable injective left $\C$\+comodules, exactly one copy
of each.
 Then $\J$ is a direct summand of the left $\C$\+comodule~$\C$.
 If at least one of the conditions~(${*}$) and~(${*}{*}$) is \emph{not}
satisfied, then $\J$ has a quotient comodule $\M=\bigoplus_i\M_i$
which is not quasi-finitely cogenerated.
 This observation finishes the proof.
\end{proof}

 A coalgebra $\C$ is said to be \emph{left quasi-co-Noetherian} if it
satisfies the equivalent conditions of
Proposition~\ref{quasi-co-Noetherian-prop}, i.~e., if any quotient
comodule of a quasi-finitely cogenerated left $\C$\+comodule is
quasi-finitely cogenerated.
 (Such coalgebras were called ``left strictly quasi-finite''
in~\cite{GTNT}.)
 Over a left quasi-co-Noetherian coalgebra $\C$, quasi-finitely
cogenerated left $\C$\+comodules form an abelian category.
 Recall that a coalgebra $\C$ is said to be \emph{left
co-Noetherian}~\cite{WW,GTNT,Pmc} if any quotient comodule of a finitely
cogenerated left $\C$\+comodule is finitely cogenerated.
 It is clear from Proposition~\ref{quasi-co-Noetherian-prop} that
any left co-Noetherian coalgebra is left quasi-co-Noetherian.

 An example of a quasi-co-Noetherian coalgebra that is not
co-Noetherian is given in~\cite[Example~1.5 in journal version or
Example~2.3 in \texttt{arXiv} version]{GTNT}.
 An example of a right co-Noetherian coalgebra that is not
left quasi-co-Noetherian can be found in~\cite[Example~2.7 in journal
version or Example~3.6 in \texttt{arXiv} version]{GTNT}. 

 A $\C$\+comodule $\M$ is said to be \emph{quasi-finitely
copresented} if it is isomorphic to the kernel of a morphism of
quasi-finitely cogenerated injective $\C$\+comodules.
 Any finitely copresented $\C$\+comodule in the sense of~\cite{WW}
and~\cite[Section~2]{Pmc} is quasi-finitely copresented.
 Any quasi-finitely copresented $\C$\+comodule is quasi-finitely
cogenerated.

 An example of a quasi-finitely cogenerated comodule that is not
quasi-finitely copresented can be easily extracted
from~\cite[Example~2.1 in \texttt{arXiv} version]{GTNT}
using part~(c) of the next lemma.

\begin{lem}
\textup{(a)} The kernel of a morphism from a quasi-finitely
copresented\/ $\C$\+comodule to a quasi-finitely cogenerated one is
quasi-finitely copresented. \par
\textup{(b)} The class of quasi-finitely copresented\/ $\C$\+comodules
is closed under extensions. \par
\textup{(c)} The cokernel of an injective morphism from
a quasi-finitely copresented\/ $\C$\+comodule to a quasi-finitely
cogenerated one is quasi-finitely cogenerated.
\end{lem}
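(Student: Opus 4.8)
The plan is to deduce all three parts from two standing facts about quasi-finitely cogenerated comodules: that this class is closed under extensions and passages to subcomodules (Lemma~\ref{qf-cogenerated-comodules-properties}(a)), and that every quasi-finitely cogenerated comodule embeds into a quasi-finitely cogenerated injective one (Lemma~\ref{qf-cogenerated-comodules-properties}(b)). I will use throughout that a finite direct sum of quasi-finitely cogenerated injective comodules is again quasi-finitely cogenerated (being an iterated extension) and injective, and that a direct summand of a quasi-finitely cogenerated comodule, being a subcomodule, is quasi-finitely cogenerated. The guiding idea is that a quasi-finitely copresented comodule $\M$ comes with an inclusion $\M\hookrightarrow\J^0$ into a quasi-finitely cogenerated injective whose cokernel $\J^0/\M$ is a \emph{subcomodule} of the quasi-finitely cogenerated $\J^1$, hence itself quasi-finitely cogenerated. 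The main obstacle to watch for is precisely that in each part the object one wants to control appears most naturally as a \emph{quotient} of a quasi-finitely cogenerated comodule, and such quotients are quasi-finitely cogenerated only under a quasi-co-Noetherian hypothesis (Proposition~\ref{quasi-co-Noetherian-prop}), which must \emph{not} be assumed here. The entire point is to realize each relevant cokernel either as a subcomodule of a copresenting injective, or as a direct summand split off by injectivity, never as an uncontrolled quotient.

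For part~(a), let $f\colon\M\rarrow\N$ with $\M$ quasi-finitely copresented, say $\M=\ker(d\colon\J^0\rarrow\J^1)$, and $\N$ quasi-finitely cogenerated. First I would embed $\N$ into a quasi-finitely cogenerated injective $\J$; since $\N\hookrightarrow\J$ is a monomorphism this does not change the kernel, so I may assume $\N=\J$ is injective. Then the injectivity of $\J$ lets me extend $f$ along $\M\hookrightarrow\J^0$ to a map $g\colon\J^0\rarrow\J$, and I compute $\ker f=\ker d\cap\ker g=\ker\bigl((d,g)\colon\J^0\rarrow\J^1\oplus\J\bigr)$. This exhibits $\ker f$ as the kernel of a morphism between the quasi-finitely cogenerated injectives $\J^0$ and $\J^1\oplus\J$, so $\ker f$ is quasi-finitely copresented.

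For part~(c), given a monomorphism $f\colon\M\hookrightarrow\N$ with $\M=\ker(d\colon\J^0\rarrow\J^1)$ quasi-finitely copresented and $\N$ quasi-finitely cogenerated, I would form the pushout $P$ of $f$ along the inclusion $\iota\colon\M\hookrightarrow\J^0$. As both maps are monomorphisms, $P$ sits in two short exact sequences $0\rarrow\N\rarrow P\rarrow\J^0/\M\rarrow0$ and $0\rarrow\J^0\rarrow P\rarrow\N/\M\rarrow0$. The first, together with the fact that $\J^0/\M\cong\operatorname{im}d\subset\J^1$ is quasi-finitely cogenerated, shows by closure under extensions that $P$ is quasi-finitely cogenerated. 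The decisive point, and the crux of the whole lemma, is that in the second sequence the subobject $\J^0$ is injective, so the sequence splits; hence $\N/\M=\operatorname{coker}f$ is a direct summand of $P$ and therefore quasi-finitely cogenerated.

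For part~(b), I would run a horseshoe argument. Writing $\M'=\ker(d'\colon{\J'}^0\rarrow{\J'}^1)$ and $\M''=\ker(d''\colon{\J''}^0\rarrow{\J''}^1)$, I extend the inclusion $\M'\hookrightarrow{\J'}^0$ along $\M'\hookrightarrow\M$ (injectivity of ${\J'}^0$) to a map $p\colon\M\rarrow{\J'}^0$, and set $\alpha=(p,\,\iota''\pi)\colon\M\rarrow{\J'}^0\oplus{\J''}^0$, where $\pi\colon\M\rarrow\M''$ is the quotient map and $\iota''\colon\M''\hookrightarrow{\J''}^0$ the copresentation inclusion; one checks $\alpha$ is a monomorphism. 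Comparing the defining short exact sequence of $\M$ with the split sequence $0\rarrow{\J'}^0\rarrow{\J'}^0\oplus{\J''}^0\rarrow{\J''}^0\rarrow0$ through the snake lemma produces a short exact sequence $0\rarrow{\J'}^0/\M'\rarrow\operatorname{coker}\alpha\rarrow{\J''}^0/\M''\rarrow0$; as the two outer terms are subcomodules of ${\J'}^1$ and ${\J''}^1$, they are quasi-finitely cogenerated, and hence so is $\operatorname{coker}\alpha$. Embedding $\operatorname{coker}\alpha$ into a quasi-finitely cogenerated injective $\J$ then presents $\M=\ker({\J'}^0\oplus{\J''}^0\rarrow\J)$ as quasi-finitely copresented, completing the argument.
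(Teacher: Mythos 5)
Your proof is correct, and it is exactly the argument the paper intends: the paper's proof consists of the single line ``Follows from Lemma~\ref{qf-cogenerated-comodules-properties} (cf.\ the proof of \cite[Lemma~2.8(a)]{Pmc})'', and your injectivity-extension, pushout, and horseshoe arguments are the standard details being elided there, all correctly reduced to closure of quasi-finitely cogenerated comodules under extensions and subobjects together with embeddability into quasi-finitely cogenerated injectives. In particular you rightly identify and avoid the trap of invoking closure under quotients, which would require the quasi-co-Noetherian hypothesis.
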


\begin{proof}
 Follows from Lemma~\ref{qf-cogenerated-comodules-properties}
(cf.\ the proof of~\cite[Lemma~2.8(a)]{Pmc}).
\end{proof}

 Given a subcoalgebra $\B\subset\C$ and a left $\C$\+contramodule $\fT$,
we denote by ${}^\B\fT$ the maximal quotient contramodule of $\fT$
whose $\C$\+contramodule structure comes from a $\B$\+contramodule
structure.
 The $\B$\+contramodule ${}^\B\fT$ can be computed as the cokernel of
the composition $\Hom_k(\C/\B,\fT)\rarrow\fT$ of the natural embedding
$\Hom_k(\C/\B,\fT)\rarrow\Hom_k(\C,\fT)$ with the contraaction map
$\Hom_k(\C,\fT)\rarrow\fT$, or as the space of cohomomorphisms
${}^\B\fT=\Cohom_\C(\B,\fT)$\, \cite[Section~2]{Pmc}.

\begin{lem} \label{qf-generated-contramodule-definition-lem}
 Let\/ $\C$ be a coassociative coalgebra over~$k$ and\/ $\fT$ be
a left\/ $\C$\+contra\-module.
 Then the following four conditions are equivalent:
\begin{itemize}
\item for any finite-dimensional subcoalgebra\/ $\B\subset\C$,
the $k$\+vector space\/ ${}^\B\fT$ is finite-dimensional;
\item for any cosimple subcoalgebra\/ $\A\subset\C$,
the $k$\+vector space\/ ${}^\A\fT$ is finite-dimensional;
\item for any finite-dimensional left\/ $\C$\+contramodule\/ $\fK$,
the $k$\+vector space\/ $\Hom^\C(\fT,\fK)$ is finite-dimensional;
\hbadness=2825
\item for any irreducible left\/ $\C$\+contramodule\/ $\fI$,
the $k$\+vector space\/ $\Hom^\C(\fT,\fI)$ is finite-dimensional.
\end{itemize}
\end{lem}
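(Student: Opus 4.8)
The plan is to prove this exactly dually to Lemma~\ref{qf-cogenerated-comodule-definition-lem}, replacing the socle of a comodule by the \emph{top} (maximal semisimple quotient contramodule) of a contramodule. As in the comodule case, everything reduces to the finite-dimensional situation: one uses that $\C$ is the union of its finite-dimensional subcoalgebras, that every irreducible left $\C$\+contramodule is a contramodule over a (unique) cosimple subcoalgebra $\A\subset\C$ and is finite-dimensional, that irreducible left $\C$\+contramodules correspond bijectively to cosimple subcoalgebras of $\C$ just as in the comodule case, and that every finite-dimensional left $\C$\+contramodule is a contramodule over some finite-dimensional subcoalgebra $\B\subset\C$, i.e.\ a module over the finite-dimensional algebra $\B^*$. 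The one computational input I would record first is the maximality/adjunction property of the functor ${}^\B({-})=\Cohom_\C(\B,{-})$: since $\fT\twoheadrightarrow{}^\B\fT$ is the maximal $\B$\+contramodule quotient of $\fT$, every contramodule morphism from $\fT$ into a $\B$\+contramodule $\fK$ factors uniquely through it, so that $\Hom^\C(\fT,\fK)=\Hom^\B({}^\B\fT,\fK)$ for any $\B$\+contramodule $\fK$; and for $\A\subset\B$ cosimple one has ${}^\A\fT={}^\A({}^\B\fT)$ by the same maximality.

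Granting this, I would arrange the four conditions into $(1)\Rightarrow(2)\Leftrightarrow(4)$, \ $(2)\Rightarrow(1)$, and $(4)\Leftrightarrow(3)$. The implications $(1)\Rightarrow(2)$ and $(3)\Rightarrow(4)$ are immediate, a cosimple subcoalgebra being a finite-dimensional subcoalgebra and an irreducible contramodule being a finite-dimensional one. For $(2)\Leftrightarrow(4)$ I would use that, for an irreducible $\fI$ sitting over the cosimple subcoalgebra $\A$, the algebra $\A^*$ is simple, so the $\A^*$\+module ${}^\A\fT$ is semisimple, hence a direct sum of copies of the unique simple $\fI$; thus $\Hom^\C(\fT,\fI)=\Hom_{\A^*}({}^\A\fT,\fI)$ is finite-dimensional precisely when ${}^\A\fT$ is finite-dimensional. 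The implication $(4)\Rightarrow(3)$ follows by left exactness of $\Hom^\C(\fT,{-})$ and induction on the length of a composition series of the finite-dimensional $\fK$, bounding $\dim_k\Hom^\C(\fT,\fK)$ by the sum of the $\dim_k\Hom^\C(\fT,\fI_j)$ over the composition factors $\fI_j$.

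The only substantive step is $(2)\Rightarrow(1)$, which is the contramodule incarnation of the statement that a module over a finite-dimensional algebra with finite-dimensional top is finite-dimensional. Fixing a finite-dimensional subcoalgebra $\B$, I would view $M={}^\B\fT$ as a $\B^*$\+module; its maximal semisimple quotient $M/\mathrm{rad}(\B^*)M$ decomposes into the finitely many isotypic components indexed by the cosimple subcoalgebras $\A_i\subset\B$, and the $\fI_{\A_i}$\+isotypic component is exactly ${}^{\A_i}M={}^{\A_i}\fT$. By $(2)$ each of these is finite-dimensional, so the top of $M$ is finite-dimensional; since $\mathrm{rad}(\B^*)$ is nilpotent, Nakayama's lemma shows $M$ is finitely generated, hence finite-dimensional, which is $(1)$.

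I expect the main obstacle to be expository rather than mathematical: in contrast to comodules, a $\C$\+contramodule is \emph{not} in general the limit of its finite-dimensional quotient contramodules, so the reduction to finite-dimensional subcoalgebras must be justified only for the finite-dimensional contramodules $\fK$, $\fI$ and for the finite-dimensional quotients ${}^\B\fT$ that actually appear, never for $\fT$ itself. Once this is kept in mind, the argument is the exact dual of Lemma~\ref{qf-cogenerated-comodule-definition-lem}, with the requisite background on contramodules, the functor ${}^\B({-})$, and the classification of irreducibles taken from \cite[Section~2]{Pmc} and \cite{Prev,Psemi,Pkoszul}.
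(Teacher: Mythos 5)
Your argument follows the same route as the paper's: reduce everything to the statement that a module over a finite-dimensional algebra is finite-dimensional if and only if its cosocle (maximal semisimple quotient) is, proved via nilpotency of the radical and Nakayama. The details you supply --- the adjunction $\Hom^\C(\fT,\fK)=\Hom^\B({}^\B\fT,\fK)$ for $\B$\+contramodules $\fK$, the identification of the isotypic components of the cosocle of ${}^\B\fT$ with the ${}^{\A_i}\fT$, and the composition-series induction for $(4)\Rightarrow(3)$ --- are all sound. One background assertion in your list is, however, false, and the paper's proof explicitly warns against it: it is \emph{not} true in general that every finite-dimensional left $\C$\+contramodule is a contramodule over some finite-dimensional subcoalgebra $\B\subset\C$; only the irreducible ones are guaranteed to be (each living over its cosimple subcoalgebra). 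Fortunately your argument never actually invokes this claim for a general finite-dimensional $\fK$: the adjunction is applied only to irreducibles, and the step $(4)\Rightarrow(3)$ uses only the existence of a finite composition series of $\fK$ in $\C\contra$, which follows from finite dimensionality alone. So the proof survives, but that sentence should be deleted or restricted to irreducible contramodules. You correctly anticipated the other contramodule-specific pitfall (a contramodule need not embed into the projective limit of its finite-dimensional quotients), which is precisely the second caveat the paper records.
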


\begin{proof}
 This is also essentially a statement about contramodules over
finite-dimensional coalgebras, or which is the same thing, modules
over finite-dimensional algebras.
 Basically, the assertion is that a module over a finite-dimensional
algebra is finite-dimensional if and only if its quotient module by
its cosocle is finite-dimensional. {\hbadness=1800\par}

 We refer to~\cite[Appendix~A]{Psemi} and~\cite[Section~1]{Prev}
for the background material.
 In particular, one has to be careful in that \emph{not} every
$\C$\+contramodule embeds into the projective limit of its
finite-dimensional quotient contramodules; nevertheless, any
nonzero $\C$\+contramodule has a nonzero finite-dimensional
quotient contramodule, and therefore all irreducible $\C$\+contramodules
are finite-dimensional.
 Furthermore, \emph{not} every finite-dimensional $\C$\+contramodule
is a contramodule over a finite-dimensional subcoalgebra of $\C$,
generally speaking; but every irreducible $\C$\+contramodule is,
so irreducible left $\C$\+contramodules still correspond bijectively
to cosimple subcoalgebras in~$\C$.
\end{proof}

 We will say that a left $\C$\+contramodule $\fT$ is
\emph{quasi-finitely generated} if it satisfies the equivalent
conditions of Lemma~\ref{qf-generated-contramodule-definition-lem}.
 Recall that a left $\C$\+contramodule is said to be \emph{finitely
generated}~\cite[Section~2]{Pmc} if it is a quotient contramodule of
a free left $\C$\+contramodule $\Hom_k(\C,V)$ with a finite-dimensional
space of generators~$V$.
 According to~\cite[Lemma~2.5(a) and the proof of Lemma~2.5(b)]{Pmc},
any finitely generated $\C$\+contramodule is quasi-finitely generated,
while the free left $\C$\+contramodule $\Hom_k(\C,V)$ generated by
an infinite-dimensional vector space $V$ is not quasi-finitely
generated when $\C\ne0$.

 One can see from~\cite[Lemma~2.5(e)]{Pmc} that the classes of
finitely generated and quasi-finitely generated left $\C$\+contramodules
coincide if and only if the maximal cosemisimple subcoalgebra $\C^\ss$
of the coalgebra $\C$ is finite-dimensional.
 Unlike the finite generatedness condition, the quasi-finite
generatedness condition on contramodules is Morita invariant, i.~e.,
it is preserved by equivalences of the categories of contramodules
$\C\contra\simeq\D\contra$ over different coalgebras $\C$ and $\D$
(see~\cite[Section~7.5.3]{Psemi} for a discussion).

\begin{lem} \label{qf-generated-contramodules-properties}
\textup{(a)} The class of quasi-finitely generated left\/
$\C$\+contramodules is closed under extensions and the passages to
arbitrary quotient contramodules. \par
\textup{(b)} Any quasi-finitely generated left\/ $\C$\+contramodule
is a quotient contramodule of a quasi-finitely generated projective\/
$\C$\+contramodule.
\end{lem}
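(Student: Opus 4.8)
The plan is to dualize the proof of Lemma~\ref{qf-cogenerated-comodules-properties}, exchanging the left exact functor $\M\mapsto{}_\B\M$ on comodules for the right exact functor $\fT\mapsto{}^\B\fT$ on contramodules, and the injective envelope for the projective cover. I would take the criteria of Lemma~\ref{qf-generated-contramodule-definition-lem} as the working description of quasi-finite generatedness.

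For part~(a), I would first record that for every subcoalgebra $\B\subset\C$ the functor $\fT\mapsto{}^\B\fT$ is right exact. This is immediate from the construction recalled above: ${}^\B\fT$ is the cokernel of the natural map $\Hom_k(\C/\B,\fT)\rarrow\fT$, and both its source and its target are exact functors of $\fT$, so the cokernel is right exact. Given a surjection $\fT\twoheads\fS$ with $\fT$ quasi-finitely generated, right exactness gives a surjection ${}^\B\fT\twoheads{}^\B\fS$ for every~$\B$; hence ${}^\B\fS$ is finite-dimensional whenever ${}^\B\fT$ is, and $\fS$ is quasi-finitely generated by the finite-dimensional-subcoalgebra criterion of Lemma~\ref{qf-generated-contramodule-definition-lem}. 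Given a short exact sequence $0\rarrow\fT'\rarrow\fT\rarrow\fT''\rarrow0$ with quasi-finitely generated outer terms, right exactness yields an exact sequence ${}^\B\fT'\rarrow{}^\B\fT\rarrow{}^\B\fT''\rarrow0$, whence $\dim_k{}^\B\fT\le\dim_k{}^\B\fT'+\dim_k{}^\B\fT''<\infty$ for every finite-dimensional~$\B$; so $\fT$ is quasi-finitely generated as well.

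For part~(b), I would reduce the claim to the assertion that the projective cover of a quasi-finitely generated $\C$-contramodule is again quasi-finitely generated. Granting this, if $\fT$ is quasi-finitely generated and $\P\twoheads\fT$ is a projective cover, then $\P$ is a quasi-finitely generated projective contramodule having $\fT$ as a quotient, which is exactly what is claimed. To prove the assertion, fix a cosimple subcoalgebra $\A\subset\C$. As $\A$ is cosimple, the algebra $\A^*$ is simple and every $\A$-contramodule is a semisimple $\A^*$-module of a single isomorphism type; hence ${}^\A\P$ is the $\A$-isotypic component of the top (maximal semisimple quotient) of~$\P$. Since $\P\twoheads\fT$ is a projective cover, its kernel is superfluous, i.e.\ contained in the radical of~$\P$, so every semisimple quotient of $\P$ factors through~$\fT$; the tops of $\P$ and $\fT$ therefore coincide, and passing to $\A$-isotypic components gives ${}^\A\P\cong{}^\A\fT$. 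As ${}^\A\fT$ is finite-dimensional for every cosimple $\A$, so is ${}^\A\P$, and $\P$ is quasi-finitely generated by the cosimple-subcoalgebra criterion of Lemma~\ref{qf-generated-contramodule-definition-lem}.

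The main obstacle will be the contramodule-specific input in part~(b): one must know that $\C$-contramodules admit projective covers and that the kernel of such a cover lies in the radical, so that it is annihilated upon passage to the $\A$-quotients. These are the contramodule analogues of the existence and essentiality of injective envelopes, together with the local finiteness of $\C\comodl$, that underlie the comodule case. Given the cautions noted in the proof of Lemma~\ref{qf-generated-contramodule-definition-lem}—for instance, that a finite-dimensional contramodule need not come from a finite-dimensional subcoalgebra—this duality must be set up with some care; it is helpful that only the \emph{irreducible} contramodules, which do correspond bijectively to cosimple subcoalgebras, enter the isotypic bookkeeping of the argument.
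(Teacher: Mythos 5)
Your proposal is correct and follows essentially the same route as the paper: part~(a) is exactly the right-exactness of $\fT\mapsto{}^\B\fT$, and part~(b) is the paper's argument in its projective-cover formulation --- the paper first constructs a projective $\P$ with ${}^{\C^\ss}\P\simeq{}^{\C^\ss}\fT$ and applies the contramodule Nakayama lemma, and then itself remarks that this $\P$ is precisely a projective cover of~$\fT$ (citing \cite{Pproperf} for existence), which is the form you take as your starting point. The contramodule-specific inputs you flag (existence of projective covers, superfluousness of the kernel, and the restriction of the isotypic bookkeeping to cosimple subcoalgebras) are exactly the ones the paper relies on.
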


\begin{proof}
 To prove part~(a), notice that the functor $\fT\longmapsto{}^\B\fT$ is
right exact for any subcoalgebra $\B\subset\C$.

 The proof of part~(b) is based on the arguments in the first half of
the proof of \cite[Lemma~A.3]{Psemi}.
 Given a left $\C$\+contramodule $\fT$, one considers the left
$\C^\ss$\+contramodule $\fK={}^{\C^\ss}\fT$.
 The key step is to construct for any left $\C^\ss$\+contramodule $\fK$
a projective left $\C$\+contramodule $\P$ such that the left
$\C^\ss$\+contramodule ${}^{\C^\ss}\P$ is isomorphic to~$\fK$.
 Then one applies the contramodule Nakayama lemma~\cite[Lemma~A.2.1]{Psemi}
in order to show that $\fT$ is a quotient $\C$\+contramodule of~$\P$.

 The above proof of part~(b) looks different from the proof of
Lemma~\ref{qf-cogenerated-comodules-properties}(b), but in fact they
are very similar (or dual-analogous).
 Any $\C$\+contramodule has a projective
cover~\cite[Example~11.3]{Pproperf}, and the $\C$\+contramodule $\P$ in
the above argument is a projective cover of the $\C$\+contramodule $\fT$
(cf.\ the construction in~\cite[Section~9]{Pproperf}).
\end{proof}

 A $\C$\+contramodule $\fT$ is said to be \emph{quasi-finitely
presented} if it is isomorphic to the cokernel of a morphism of
quasi-finitely generated projective $\C$\+contramodules.
 Any finitely presented contramodule in the sense
of~\cite[Section~2]{Pmc} is quasi-finitely presented.
 Any quasi-finitely presented $\C$\+contramodule is quasi-finitely
generated.

\begin{lem}
\textup{(a)} The cokernel of a morphism from a quasi-finitely
generated\/ $\C$\+contramodule to a quasi-finitely presented one
is quasi-finitely presented. \par
\textup{(b)} The class of quasi-finitely presented\/ $\C$\+contramodules
is closed under extensions. \par
\textup{(c)} The kernel of a surjective morphism from a quasi-finitely
generated\/ $\C$\+contra\-module to a quasi-finitely presented one is
quasi-finitely generated.
\end{lem}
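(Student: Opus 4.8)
The plan is to deduce all three parts from Lemma~\ref{qf-generated-contramodules-properties}, dually to the corresponding assertions about quasi-finitely copresented comodules proved above (cf.\ the proof of~\cite[Lemma~2.8(a)]{Pmc}). The one structural feature to keep in mind throughout is that quasi-finite generatedness of left $\C$\+contramodules is only known to pass to \emph{quotient} contramodules and to be stable under \emph{extensions} (Lemma~\ref{qf-generated-contramodules-properties}(a)), but \emph{not} to pass to subcontramodules; so each argument must be set up so that the contramodule one wishes to control appears as a quotient or as an extension of quasi-finitely generated contramodules. The remaining two ingredients are the existence of quasi-finitely generated projective covers (Lemma~\ref{qf-generated-contramodules-properties}(b)) and the projectivity of such covers. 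For this reason I expect part~(c) to be the only delicate point, since there the object in question, $\ker f$, is naturally a subcontramodule; the device to get around this will be a fiber product realizing $\ker f$ as a direct summand.

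For part~(a), let $f\:\fS\rarrow\fT$ be a morphism from a quasi-finitely generated $\fS$ to a quasi-finitely presented $\fT$, and fix a presentation $\P_1\rarrow\P_0\twoheads\fT$ of $\fT$ by quasi-finitely generated projectives. Using Lemma~\ref{qf-generated-contramodules-properties}(b), choose a quasi-finitely generated projective $\fK$ with a surjection $\fK\twoheads\fS$. As $\fK$ is projective and $\P_0\twoheads\fT$ is an epimorphism, the composite $\fK\twoheads\fS\overset{f}{\rarrow}\fT$ lifts to a morphism $\fK\rarrow\P_0$. Then $\operatorname{coker}(f)$ is $\fT$ modulo the image of $\fK$, which is precisely the cokernel of the combined morphism $\P_1\oplus\fK\rarrow\P_0$ (the presentation map together with the chosen lift). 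Since $\P_1\oplus\fK$ and $\P_0$ are quasi-finitely generated projectives, this presents $\operatorname{coker}(f)$ and shows it is quasi-finitely presented.

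For part~(b), I would run the contramodule horseshoe argument. Given $0\rarrow\fT'\rarrow\fT\rarrow\fT''\rarrow0$ with $\fT'$ and $\fT''$ quasi-finitely presented, choose presentations $\P'_1\rarrow\P'_0\twoheads\fT'$ and $\P''_1\rarrow\P''_0\twoheads\fT''$. Lifting $\P''_0\twoheads\fT''$ along $\fT\twoheads\fT''$ and combining with $\P'_0\twoheads\fT'\rarrow\fT$ yields a surjection $\P'_0\oplus\P''_0\twoheads\fT$; let $\F$ be its kernel. The horseshoe lemma gives a short exact sequence $0\rarrow\F'\rarrow\F\rarrow\F''\rarrow0$, in which $\F'$ and $\F''$ are the kernels of $\P'_0\twoheads\fT'$ and $\P''_0\twoheads\fT''$, hence quotients of $\P'_1$ and $\P''_1$ and therefore quasi-finitely generated by Lemma~\ref{qf-generated-contramodules-properties}(a). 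Being an extension of quasi-finitely generated contramodules, $\F$ is itself quasi-finitely generated; covering $\F$ by a quasi-finitely generated projective then completes a quasi-finite presentation of $\fT$.

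For part~(c), let $f\:\fS\twoheads\fT$ be a surjection with $\fS$ quasi-finitely generated and $\fT$ quasi-finitely presented, and fix a presentation whose degree-zero term $\P_0\twoheads\fT$ is a quasi-finitely generated projective whose kernel~$\F$ is quasi-finitely generated (being a quotient of the first term of the presentation). I would form the pullback $\fK=\fS\times_\fT\P_0$. Its projection onto $\P_0$ is surjective with kernel $\ker f$, and this short exact sequence splits because $\P_0$ is projective, so $\fK\cong\ker f\oplus\P_0$; its projection onto $\fS$ is surjective with kernel $\F$, exhibiting $\fK$ as an extension $0\rarrow\F\rarrow\fK\rarrow\fS\rarrow0$ of quasi-finitely generated contramodules. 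Hence $\fK$ is quasi-finitely generated by Lemma~\ref{qf-generated-contramodules-properties}(a), and so is its direct summand $\ker f$, which is a quotient of $\fK$. This resolves the delicate point flagged above and finishes the proof.
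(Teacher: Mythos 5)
Your proof is correct, and it follows exactly the route the paper intends: the paper's own proof is a one-line reduction to Lemma~\ref{qf-generated-contramodules-properties} (citing the analogous arguments in~\cite[Lemma~1.1]{Pfp} and~\cite[Lemma~2.8(b)]{Pmc}), and your lifting, horseshoe, and pullback arguments are precisely the standard way to carry out that reduction. In particular, your care in part~(c) to realize $\ker f$ as a direct summand (hence a quotient) of the fiber product, rather than merely a subcontramodule, correctly addresses the one genuinely delicate point.
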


\begin{proof}
 Follows from Lemma~\ref{qf-generated-contramodules-properties}
(cf.~\cite[Lemma~1.1]{Pfp} and~\cite[Lemma~2.8(b)]{Pmc}).
\end{proof}

 The following proposition is our version
of~\cite[Proposition~2.9]{Pmc}.

\begin{prop} \label{coalgebra-dualization-isomorphism-prop}
\textup{(a)} The functor\/ $\N\longmapsto\N^*=\Hom_k(\N,k)$ restricts
to an anti-equivalence between the additive category of
quasi-finitely copresented right\/ $\C$\+comodules and the additive
category of quasi-finitely presented left\/ $\C$\+contramodules. \par
\textup{(b)} For any right\/ $\C$\+comodule\/ $\M$, any quasi-finitely
cogenerated right\/ $\C$\+comodule\/ $\N$, and any $k$\+vector space
$V$, the natural $k$\+linear map\/ $\Hom_k(\M,\>\N\ot_kV)\rarrow
\Hom_k(\N^*,\Hom_k(\M,V))$ restricts to an isomorphism of the Hom
spaces in the categories of right\/ $\C$\+comodules and left\/
$\C$\+contramodules
$$
 \Hom_{\C^\rop}(\M,\>\N\ot_kV)\.\simeq\.\Hom^\C(\N^*,\Hom_k(\M,V)).
$$ \par
\textup{(c)} For any right\/ $\C$\+comodule\/ $\M$, any quasi-finitely
cogenerated right\/ $\C$\+comodule\/ $\N$, and any $k$\+vector space
$V$, the natural $k$\+linear map\/ $(\M\ot_k\N^*)\ot_kV\rarrow
\M\ot_k\Hom_k(\N,V)$ induces an isomorphism of the (contra)tensor
product spaces
$$
 (\M\ocn_\C\N^*)\ot_kV\.\simeq\.\M\ocn_\C\Hom_k(\N,V).
$$
\end{prop}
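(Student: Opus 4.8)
The plan is to prove parts~(b) and~(c) first, reducing each to the case of a finite-dimensional subcoalgebra, where the corresponding statement for finitely cogenerated comodules and finitely presented contramodules is already available from~\cite[Proposition~2.9]{Pmc}, and then to deduce part~(a) from part~(b). The technical heart of the reduction is a pair of localization identities linking the comodule and contramodule dualizations: for any finite-dimensional subcoalgebra $\B\subset\C$ and any right $\C$\+comodule $\N$, there is a natural isomorphism ${}^\B(\N^*)\simeq({}_\B\N)^*$, obtained from the cotensor--cohomomorphism duality $\Cohom_\C(\B,\Hom_k(\N,V))\simeq\Hom_k(\N\oc_\C\B,\>V)$ specialized to $V=k$ (see~\cite[Section~2]{Pmc}, \cite[Sections~2.5--6]{Prev}). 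Since $\N$ is quasi-finitely cogenerated, ${}_\B\N$ is finite-dimensional for every finite-dimensional $\B$, hence so is ${}^\B(\N^*)$; this is exactly what makes the reduction land in the finitely (co)generated world.

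For part~(b), I would first reduce to a finite-dimensional comodule $\M$. Writing $\M=\varinjlim_\beta\M_\beta$ as the filtered colimit of its finite-dimensional subcomodules (the category $\comodr\C$ is locally finite), the left-hand side becomes $\varprojlim_\beta\Hom_{\C^\rop}(\M_\beta,\>\N\ot_kV)$, while $\Hom_k(\M,V)=\varprojlim_\beta\Hom_k(\M_\beta,V)$ and $\Hom^\C(\N^*,{-})$ carries this limit of contramodules to the limit of Hom spaces; compatibility of the natural map with these (co)limits reduces the claim to finite-dimensional~$\M$. Fixing such an $\M$ and a finite-dimensional subcoalgebra $\B$ over which it is a comodule, a homomorphism out of a $\B$\+comodule factors through the maximal $\B$\+subcomodule, and ${}_\B(\N\ot_kV)\simeq({}_\B\N)\ot_kV$ (as ${}_\B({-})$ is left exact and commutes with $\ot_kV$), so the left-hand side becomes $\Hom_{\B^\rop}(\M,\>{}_\B\N\ot_kV)$; dually, $\Hom_k(\M,V)$ is a $\B$\+contramodule, a contramodule homomorphism into it factors through ${}^\B(\N^*)\simeq({}_\B\N)^*$, and the right-hand side becomes $\Hom^\B(({}_\B\N)^*,\>\Hom_k(\M,V))$. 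As ${}_\B\N$ is finite-dimensional and $\B^\ss$ is finite-dimensional, this is the finitely copresented/presented case covered by~\cite[Proposition~2.9]{Pmc}. Part~(c) is parallel: the contratensor product commutes with filtered colimits in its comodule argument and with $\ot_kV$, reducing to finite-dimensional~$\M$; then the identity $\M\ocn_\C\fT\simeq\M\ocn_\B{}^\B\fT$ (valid for arbitrary $\fT$), together with ${}^\B\Hom_k(\N,V)\simeq\Hom_k({}_\B\N,V)$ and ${}^\B(\N^*)\simeq({}_\B\N)^*$, turns both sides into $(\M\ocn_\B({}_\B\N)^*)\ot_kV$, using only that $\ocn_\B$ commutes with $\ot_kV$ over the finite-dimensional coalgebra~$\B$.

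For part~(a), I would observe that $\Hom_k({-},k)$ is an exact contravariant functor (as $k$ is a field), so a quasi-finitely copresented comodule $\N=\ker(\J^0\rarrow\J^1)$ with $\J^i$ quasi-finitely cogenerated injective is sent to $\N^*=\coker((\J^1)^*\rarrow(\J^0)^*)$; since each $(\J^i)^*$ is quasi-finitely generated projective (its ${}^\A((\J^i)^*)\simeq({}_\A\J^i)^*$ is finite-dimensional), the object $\N^*$ is quasi-finitely presented, so the functor is well defined. Full faithfulness is part~(b) with $V=k$, giving $\Hom_{\C^\rop}(\N_1,\N_2)\simeq\Hom^\C(\N_2^*,\N_1^*)$ for quasi-finitely cogenerated $\N_2$. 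For essential surjectivity, given a quasi-finitely presented contramodule $\fT=\coker(\P^1\rarrow\P^0)$ with $\P^i$ quasi-finitely generated projective, I would lift each $\P^i$ to a quasi-finitely cogenerated injective comodule $\J^i$ with $(\J^i)^*\simeq\P^i$, lift the map $\P^1\rarrow\P^0$ to a comodule homomorphism $\J^0\rarrow\J^1$ via full faithfulness on injectives/projectives, and set $\N=\ker(\J^0\rarrow\J^1)$, so that $\N^*\simeq\fT$.

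The main obstacle is precisely this last step: showing that $\Hom_k({-},k)$ restricts to an anti-equivalence between quasi-finitely cogenerated injective comodules and quasi-finitely generated projective contramodules, in particular its essential surjectivity there. This requires the structure theory matching indecomposable injective $\C$\+comodules, irreducible comodules, cosimple subcoalgebras, and indecomposable projective $\C$\+contramodules: each indecomposable injective $\J_i$ (the injective envelope of an irreducible comodule $\I_i$) must be shown to dualize to the projective cover $\P_i$ of the irreducible contramodule $\I_i^*$, and every quasi-finitely (co)generated injective or projective must be identified with a product of these indecomposables with finite multiplicities indexed by the cosimple subcoalgebras of~$\C$. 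I would carry this out using the existence of projective covers and the description of projectives in~\cite[Sections~9 and~11]{Pproperf}, together with the finite-dimensional matching in~\cite[Proposition~2.9 and Lemma~2.5]{Pmc}; once the indecomposables are matched and finite multiplicities are seen to be preserved under $\Hom_k({-},k)$, essential surjectivity follows.
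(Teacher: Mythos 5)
Your proposal is correct and follows essentially the same route as the paper: parts~(b) and~(c) are reduced, via the identifications ${}^\A\Hom_k(\M,V)=\Hom_k(\M_\A,V)$ and ${}^\A(\N^*)=(\N_\A)^*$ and a (co)limit over finite-dimensional subcoalgebras $\A\subset\C$, to the finite-dimensional case where quasi-finite cogeneratedness guarantees $\dim_k\N_\A<\infty$; and part~(a) is deduced from~(b) with $V=k$ plus essential surjectivity on injectives/projectives followed by passage to kernels/cokernels. The only cosmetic difference is in that essential surjectivity step: the paper argues that a projective contramodule is determined by its $\C^\ss$\+reduction ${}^{\C^\ss}\P$ and that all quasi-finitely generated $\C^\ss$\+contramodules are dual to $\C^\ss$\+comodules, whereas you propose the equivalent explicit matching of indecomposable injective envelopes with projective covers via \cite{Pproperf} --- both rest on the same structure theory.
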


\begin{proof}
 Part~(b): for a right $\C$\+comodule $\M$ and a subcoalgebra
$\B\subset\C$, we denote the maximal subcomodule of $\M$ whose
$\C$\+comodule structure comes from a $\B$\+comodule structure
by~$\M_\B$.
 Then for any $k$\+vector space $V$ we have $\.{}^\B\!\Hom_k(\M,V)
\allowbreak=\Hom_k(\M_\B,V)$.
 Since any right $\C$\+comodule $\M$ is the union of its subcomodules
$\M_\A$ over the finite-dimensional subcoalgebras $\A\subset\C$, it
follows that
$$
 \Hom_k(\M,V)=\varprojlim\nolimits_\A {}^\A\!\Hom_k(\M,V).
$$
 Therefore,
\begin{multline*}
 \Hom^\C(\N^*,\Hom_k(\M,V))=
 \varprojlim\nolimits_\A\.\Hom^\C(\N^*,{}^\A\!\Hom_k(\M,V)) \\
 =\varprojlim\nolimits_\A\.\Hom^\C({}^\A(\N^*),{}^\A\!\Hom_k(\M,V))
 =\varprojlim\nolimits_\A\.\Hom^\A((\N_\A)^*,\Hom_k(\M_\A,V))\\
 \simeq\varprojlim\nolimits_\A\.\Hom_{\A^\rop}(\M_\A,\>\N_\A\ot_kV)=
 \Hom_{\C^\rop}(\M,\>\N\ot_kV)
\end{multline*}
because the right $\A$\+comodule $\N_\A$ is finite-dimensional.

 To prove part~(a), we notice from the computations above that the left
$\C$\+contra\-module $\N^*$ is quasi-finitely generated if and only if
a right $\C$\+comodule $\N$ is quasi-finitely cogenerated.
 Furthermore, substituting $V=k$ into the assertion~(b), we see that
the dualization functor $\N\longmapsto\N^*\:\comodr\C\rarrow\C\contra$
is fully faithful on the full subcategory of quasi-finitely cogenerated
comodules in $\comodr\C$.
 It remains to prove the essential surjectivity.

 As the left $\C$\+contramodule $\J^*$ is projective for any injective
right $\C$\+comodule $\J$, the dualization functor takes quasi-finitely
cogenerated injective right $\C$\+comodules to quasi-finitely generated
projective left $\C$\+contramodules.
 A projective left $\C$\+contra\-module $\P$ is uniquely determined,
up to isomorphism, by the left $\C^\ss$\+contramodule ${}^{\C^\ss}\P$,
and all the quasi-finitely generated $\C^\ss$\+contramodules belong
to the image of the dualization functor; therefore so do all
the quasi-finitely generated projective $\C$\+contramodules
(cf.\ the proofs of
Lemmas~\ref{qf-cogenerated-comodules-properties}(b)
and~\ref{qf-generated-contramodules-properties}(b)).

 Finally, any quasi-finitely presented $\C$\+contramodule is
the cokernel of a morphism of quasi-finitely generated projective
$\C$\+contramodules, this morphism comes from a morphism of
quasi-finitely cogenerated injective $\C$\+comodules, the kernel
of the latter morphism is a quasi-finitely copresented $\C$\+comodule,
and the dualization functor takes the kernels to the cokernels.

 Part~(c): For any right $\C$\+comodule $\M$ and left $\C$\+contramodule
$\fT$, one has $\M\ocn_\C\fT=(\varinjlim_\A\M_A)\ocn_\C\fT=
\varinjlim_\A(\M_\A\ocn_\C\fT)=\varinjlim_\A(\M_\A\ocn_\A{}^\A\fT)$,
where the inductive limit is taken over all the finite-dimensional
subcoalgebras $\A\subset\C$.
 In particular,
\begin{multline*}
 \M\ocn_\C\Hom_k(\N,V)=
 \varinjlim\nolimits_\A(\M_\A\ocn_\C\Hom_k(\N_\A,V)) \\
 \simeq\varinjlim\nolimits_\A((\M_\A\ocn_\C\N_\A^*)\ot_kV)=
 (\M\ocn_\C\N^*)\ot_kV,
\end{multline*}
because $\N_\A$ is finite-dimensional.
\end{proof}

\begin{rem} \label{takeuchi-co-hom-explained}
 The following construction of a (partially defined) functor of two
comodule arguments plays a major role in the classical paper~\cite{Tak}.
 Let $\N$ be a quasi-finitely cogenerated right $\C$\+comodule.
 Then, for any right $\C$\+comodule $\M$, the vector space of comodule
homomorphisms $\Hom_{\C^\rop}(\M,\N)$ is the projective limit of
finite-dimensional vector spaces.
 Hence it is naturally the dual vector space to a certain vector space,
which is called ``co-hom'' and denoted by $h_{\textrm-\C}(\N,\M)$
in~\cite{Tak}.
 The vector space $h_{\textrm-\C}(\N,\M)$ is characterized by
the property that, for any $k$\+vector space $V$, there is a natural
adjunction isomorphism of $k$\+vector spaces $\Hom_{\C^\rop}(\M,\>
\N\ot_kV)\simeq\Hom_k(h_{\textrm-\C}(\N,\M),V)$.

 Proposition~\ref{coalgebra-dualization-isomorphism-prop} explains
that the functor $h_{\textrm-\C}$ can be expressed in our terms as
$h_{\textrm-\C}(\N,\M)=\M\ocn_\C\N^*$.
 Indeed, for any $k$\+vector space $V$ we have
$$
 \Hom_k(\M\ocn_\C\N^*,\>V)\simeq\Hom^\C(\N^*,\Hom_k(\M,V))
 \simeq\Hom_{\C^\rop}(\M,\>\N\ot_kV)
$$
by part~(b) of the proposition.
\end{rem}

 A coalgebra $\C$ is called \emph{right quasi-cocoherent} if any
quasi-finitely cogenerated quotient comodule of a quasi-finitely
copresented right $\C$\+comodule is quasi-finitely copresented, or
equivalently, if any quasi-finitely generated subcontramodule of
a quasi-finitely presented left $\C$\+contramodule is quasi-finitely
presented.
 Over a right co-coherent coalgebra $\C$, the categories of
quasi-finitely copresented right comodules and quasi-finitely
presented left contramodules are abelian.
 Any left quasi-co-Noetherian coalgebra is left quasi-cocoherent,
and any quasi-finitely cogenerated left comodule over such a coalgebra
is quasi-finitely copresented.

 Recall that a coalgebra $\C$ is said to be \emph{right
cocoherent}~\cite[Section~2]{Pmc} if any finitely cogenerated quotient
comodule of a finitely copresented right $\C$\+comodule is finitely
copresented, or equivalently, if any finitely generated subcontramodule
of a finitely presented left $\C$\+contramodule is finitely presented.
 We do \emph{not} know whether any right cocoherent coalgebra needs
to be right quasi-cocoherent.

 A further discussion of quasi-finiteness conditions for coalgebras,
comodules, and contramodules can be found
in~\cite[Sections~5.1\+-5.4]{Psm}.

\medskip
 A $\C$\+comodule is said to be \emph{strongly quasi-finitely
copresented} if it has an injective coresolution consisting of
quasi-finitely cogenerated injective $\C$\+comodules.
 Similarly one could define ``strongly quasi-finitely presented
contramodules''; and the following two lemmas have their obvious
dual-analogous contramodule versions.

\begin{lem} \label{sqfcp-comodules-lemma}
 Let\/ $0\rarrow\K\rarrow\L\rarrow\M\rarrow0$ be a short exact
sequence of\/ $\C$\+comodules.
 Then whenever two of the three comodules\/ $\K$, $\L$, $\M$ are
strongly quasi-finitely copresented, so is the third one.
\end{lem}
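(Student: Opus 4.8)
The plan is to reduce the statement to a clean homological characterization of strong quasi-finite copresentation and then run a mapping-cone argument. Throughout, write $\sA=\C\comodl$ and recall from Lemma~\ref{qf-cogenerated-comodules-properties} that the class of quasi-finitely cogenerated comodules is closed under extensions and subcomodules, and that the injective envelope of a quasi-finitely cogenerated comodule is again quasi-finitely cogenerated (this is the content of part~(b) and its proof). Consequently a finite direct sum of quasi-finitely cogenerated injective comodules is again such, and a direct summand of a quasi-finitely cogenerated comodule, being a subcomodule, is again such.

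First I would establish the key reformulation: a comodule $\M$ is strongly quasi-finitely copresented if and only if there exists a \emph{bounded below} complex of quasi-finitely cogenerated injective comodules that is quasi-isomorphic to $\M$ (placed in cohomological degree~$0$). One direction is the definition. For the converse I would use that $\sA$ is a locally finite, in particular locally Noetherian, Grothendieck category (see the proof of Proposition~\ref{quasi-co-Noetherian-prop}), so that $\M$ admits a minimal injective coresolution $E^\bu$. Any bounded below complex of injectives is homotopy injective, so a quasi-isomorphism to $\M$ makes the given complex homotopy equivalent to $E^\bu$; since $E^\bu$ is minimal, it is isomorphic to a termwise direct summand of the given complex. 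Hence each $E^n$ is a direct summand of a quasi-finitely cogenerated injective, so $E^n$ is quasi-finitely cogenerated, and $E^\bu$ witnesses that $\M$ is strongly quasi-finitely copresented.

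With this reformulation in hand, I would treat the three cases of the two-out-of-three by lifting the relevant map of the short exact sequence $0\rarrow\K\rarrow\L\rarrow\M\rarrow0$ to a chain map of injective coresolutions and passing to the mapping cone. When $\K$ and $\L$ are strongly quasi-finitely copresented, lift $\K\rarrow\L$ to $\tilde a\:J_\K^\bu\rarrow J_\L^\bu$ with all terms quasi-finitely cogenerated injective, and observe that $\operatorname{Cone}(\tilde a)$, whose degree-$n$ term is $J_\K^{n+1}\oplus J_\L^n$, is a bounded below complex of quasi-finitely cogenerated injectives quasi-isomorphic to $\M$; the reformulation then gives $\M$ strongly quasi-finitely copresented. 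The two remaining cases are analogous but easier: if $\L,\M$ are strongly quasi-finitely copresented, lift $\L\rarrow\M$ to $\tilde b$ and use $\operatorname{Cone}(\tilde b)[-1]$ (degree-$n$ term $J_\L^n\oplus J_\M^{n-1}$) to coresolve $\K$; if $\K,\M$ are strongly quasi-finitely copresented, lift the connecting morphism $\M\rarrow\K[1]$ and use the shifted cone (equivalently, apply the horseshoe lemma directly) to coresolve $\L$. In these last two cases the resulting complex is already a genuine nonnegatively graded injective coresolution, so the reformulation is not even needed there.

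The main obstacle is the case in which $\K$ and $\L$ are strongly quasi-finitely copresented and $\M$ is to be deduced, and this is exactly where the reformulation does real work. Naively one would embed $\M$ into an injective envelope and compute cosyzygies, but $\M$ is only a quotient of $\L$, and quotients of quasi-finitely cogenerated comodules need not be quasi-finitely cogenerated; one cannot even see a priori that $\M$ lies in that class. The mapping-cone construction circumvents this difficulty: it builds a coresolution of $\M$ out of the coresolutions of $\K$ and $\L$ without ever embedding $\M$, the price being that the cone protrudes into cohomological degree~$-1$, which is precisely what the ``bounded below complex'' reformulation is designed to absorb.
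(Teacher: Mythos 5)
Your proof is correct, but it takes a more self-contained route than the paper, whose entire proof is the single line ``Dual to the proof of~\cite[Lemma~2.2]{Pps} (see \emph{loc.\ cit.}\ for relevant references)'' --- i.e., a reduction to the classical two-out-of-three property for modules of type $FP_\infty$, which is traditionally proved by an inductive, degree-by-degree construction of compatible finite-type resolutions via the horseshoe and Schanuel lemmas. You instead isolate the one genuinely hard case ($\K$ and $\L$ strongly quasi-finitely copresented $\Rightarrow$ $\M$ is) and handle it by a global mapping-cone argument, absorbing the degree~$-1$ overhang of the cone through a reformulation proved with minimal injective coresolutions. This is a legitimate and arguably cleaner argument: it exploits structure specific to $\C\comodl$ (a locally finite Grothendieck category, so minimal injective coresolutions exist and are unique), and your reformulation is in substance the single-comodule case of the paper's own Lemma~\ref{sqfcp-complexes-lemma}(b), which you re-derive independently --- so there is no circularity. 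What the paper's (referenced) inductive approach buys is independence from minimal resolutions, making it portable to settings without Krull--Schmidt-type decompositions; what yours buys is transparency about where the difficulty sits. The one step you assert without justification is that the minimal coresolution $E^\bu$, being homotopy equivalent to the given bounded below complex $J^\bu$ of injectives, is a \emph{termwise direct summand} of it. This is true, but it rests on the fact that any endomorphism of a minimal bounded below complex of injectives that is homotopic to the identity is an automorphism (one checks by induction on the degree, using that $\ker d^n$ is essential in $E^n$ and that an injective subobject containing an essential subobject is the whole object), whence a one-sided homotopy inverse composite $g\circ f\:E^\bu\to E^\bu$ is invertible and splits $E^\bu$ off $J^\bu$. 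You should either supply this argument or cite it; as written it is the only load-bearing claim in the proof left unsupported.
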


\begin{proof}
 Dual to the proof of~\cite[Lemma~2.2]{Pps}
(see \emph{loc.\ cit.}\ for relevant references). 
\end{proof}

 Abusing terminology, we will say that a bounded below complex of
$\C$\+comodules is \emph{strongly quasi-finitely copresented} if it is
quasi-isomorphic to a bounded below complex of quasi-finitely
cogenerated injective $\C$\+comodules.
 Clearly, the class of all strongly quasi-finitely copresented complexes
is closed under shifts and cones in $\sD^+(\C\comodl)$.

\begin{lem} \label{sqfcp-complexes-lemma}
\textup{(a)} Any bounded below complex of strongly quasi-finitely
copresented\/ $\C$\+comodules is strongly quasi-finitely copresented.
\par
\textup{(b)} Let\/ $\M^\bu$ be a complex of\/ $\C$\+comodules
concentrated in the cohomological degrees~$\ge\nobreak n$, where
$n$~is a fixed integer.
 Then\/ $\M^\bu$ is strongly quasi-finitely copresented if and only if
it is quasi-isomorphic to a complex of strongly quasi-finitely
copresented\/ $\C$\+comodules concentrated in the cohomological
degrees~$\ge\nobreak n$. \par
\textup{(c)} Let\/ $\M^\bu$ be a finite complex of\/ $\C$\+comodules
concentrated in the cohomological degrees $n_1\le m\le n_2$.
 Then\/ $\M^\bu$ is strongly quasi-finitely copresented if and only if
it is quasi-isomorphic to a complex of\/ $\C$\+comodules\/ $\K^\bu$
concentrated in the cohomological degrees $n_1\le m\le n_2$ such that
the\/ $\C$\+comodules\/ $\K^m$ are quasi-finitely cogenerated and
injective for all $n_1\le m\le n_2-1$, while the\/ $\C$\+comodule\/
$\K^{n_2}$ is strongly quasi-finitely copresented.
\end{lem}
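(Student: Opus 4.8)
The plan is to derive all three parts from the two-out-of-three property of strongly quasi-finitely copresented comodules (Lemma~\ref{sqfcp-comodules-lemma}) and the two basic closure properties of quasi-finitely cogenerated comodules (Lemma~\ref{qf-cogenerated-comodules-properties}), by applying suitable truncations to complexes of quasi-finitely cogenerated injectives. Part~(a) is the foundation and carries the main difficulty, so I would prove it first. The naive idea of forming a Cartan--Eilenberg resolution of $\M^\bu$ does \emph{not} work here: the cohomology comodules $H^m(\M^\bu)$ and the cocycle comodules of a complex of strongly quasi-finitely copresented comodules need not be strongly quasi-finitely copresented, so resolving them would take us outside the class of quasi-finitely cogenerated injectives. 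The remedy is to resolve only the \emph{terms}, using the stupid filtration rather than the cohomology.

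Concretely, for part~(a), write $\sigma_{\le m}\M^\bu$ for the brutal truncation of $\M^\bu$ in the cohomological degrees $\le m$, and consider the short exact sequences of complexes $0\rarrow\M^m[-m]\rarrow\sigma_{\le m}\M^\bu\rarrow\sigma_{\le m-1}\M^\bu\rarrow0$. Each $\M^m$ is strongly quasi-finitely copresented, hence admits a coresolution by quasi-finitely cogenerated injectives; applying the horseshoe lemma along these sequences produces, by upward induction on~$m$ (starting from $\sigma_{\le n_0}\M^\bu=\M^{n_0}[-n_0]$, where $\M^\bu$ is concentrated in degrees $\ge n_0$), compatible coresolutions $R(\sigma_{\le m}\M^\bu)$ whose terms are finite direct sums of quasi-finitely cogenerated injectives, hence again quasi-finitely cogenerated injective by Lemma~\ref{qf-cogenerated-comodules-properties}(a) and the closure of injectives under finite direct sums. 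The induced maps $R(\sigma_{\le m}\M^\bu)\rarrow R(\sigma_{\le m-1}\M^\bu)$ are degreewise split surjections, so the inverse system stabilizes in each degree; its limit $J^\bu$ is a bounded below complex of quasi-finitely cogenerated injectives quasi-isomorphic to $\M^\bu=\varprojlim_m\sigma_{\le m}\M^\bu$. Boundedness below is exactly what guarantees that each total degree involves only finitely many summands, which keeps $J^\bu$ within the class and makes the unbounded-above case harmless. This is the step I expect to be the main obstacle.

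The \emph{if} directions of~(b) and~(c) are then immediate: any quasi-finitely cogenerated injective comodule is strongly quasi-finitely copresented (it is its own one-term injective coresolution), so a complex satisfying the right-hand condition is a bounded below complex of strongly quasi-finitely copresented comodules, hence strongly quasi-finitely copresented by~(a), and this property transfers along the quasi-isomorphism to~$\M^\bu$. For the \emph{only if} direction of~(b), take a quasi-isomorphism $\M^\bu\rarrow J^\bu$ with $J^\bu$ a bounded below complex of quasi-finitely cogenerated injectives, say concentrated in degrees $\ge n_0$. Since $H^m(J^\bu)=0$ for $m<n$, the canonical truncation $\tau_{\ge n}J^\bu$ is still quasi-isomorphic to~$\M^\bu$. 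The key observation is that a bounded below complex of injectives exact in degrees $<n$ splits at the bottom: in the short exact sequences $0\rarrow B^m\rarrow J^m\rarrow B^{m+1}\rarrow0$ (with $B^m$ the image of $d^{m-1}$ and $B^{n_0}=0$), each $B^m$ for $n_0<m\le n$ is successively a direct summand of the quasi-finitely cogenerated injective $J^{m-1}$, hence quasi-finitely cogenerated (Lemma~\ref{qf-cogenerated-comodules-properties}(a)) and injective; consequently $(\tau_{\ge n}J^\bu)^n=J^n/B^n$ is a direct summand of $J^n$, again quasi-finitely cogenerated injective. Thus $\tau_{\ge n}J^\bu$ is a complex of strongly quasi-finitely copresented comodules in degrees $\ge n$.

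Finally, for the \emph{only if} direction of~(c), I would combine this bottom truncation with a top truncation of the same $J^\bu$, treating the two ends asymmetrically as condition~(c) demands. The splitting just described shows that $\tau_{\ge n_1}J^\bu$ has quasi-finitely cogenerated injective terms in all degrees $\ge n_1$. Applying $\tau_{\le n_2}$ replaces the degree-$n_2$ term by the cocycle comodule $Z^{n_2}=\ker d^{n_2}$ and kills the higher degrees, without altering the quasi-isomorphism type since $H^m(J^\bu)=0$ for $m>n_2$. Because $J^\bu$ is exact in degrees $>n_2$, the sequence $0\rarrow Z^{n_2}\rarrow J^{n_2}\rarrow J^{n_2+1}\rarrow\dotsb$ is a coresolution of $Z^{n_2}$ by quasi-finitely cogenerated injectives, so $Z^{n_2}$ is strongly quasi-finitely copresented. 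Hence $\K^\bu=\tau_{\le n_2}\tau_{\ge n_1}J^\bu$ is concentrated in degrees $n_1\le m\le n_2$, has quasi-finitely cogenerated injective terms for $n_1\le m\le n_2-1$ and a strongly quasi-finitely copresented term in degree~$n_2$, and remains quasi-isomorphic to~$\M^\bu$, exactly as required.
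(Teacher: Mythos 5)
Your argument is correct and is precisely the dualization of the cited module-theoretic lemma that the paper's one-line proof (``Dual to \cite[Lemma~2.3]{Pps}'') appeals to: term-wise coresolutions totalized along the stupid filtration for part~(a), and canonical truncations combined with splitting off the injective coboundary subobjects for parts~(b) and~(c). The only point worth adding is the degenerate case $n_1=n_2$ in~(c), where the degree-$n_2$ term of $\tau_{\le n_2}\tau_{\ge n_1}J^\bu$ is $Z^{n_2}/B^{n_2}$ rather than $Z^{n_2}$; it is still strongly quasi-finitely copresented by Lemma~\ref{sqfcp-comodules-lemma} applied to $0\to B^{n_2}\to Z^{n_2}\to Z^{n_2}/B^{n_2}\to 0$.
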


\begin{proof}
 Dual to~\cite[Lemma~2.3]{Pps}.
\end{proof}

 Lemmas~\ref{sqfcp-comodules-lemma}\+-\ref{sqfcp-complexes-lemma} are
very similar to the respective results from~\cite[Section~2]{Pps}.
 The following examples of pseudo-derived categories of comodules
and contramodules, intended to illustrate the discussion in
Section~\ref{introd-pseudo-derived} of the introduction, are
different from~\cite[Examples~2.5\+-2.6]{Pps}, though, in that
no finiteness or quasi-finiteness conditions
(of the kind discussed above in this section) play any role in
Examples~\ref{pseudo-coderived-comodule-examples}\+-%
\ref{pseudo-contraderived-contramodule-examples}.

 Given a left $\C$\+comodule $\M$ and a right $\C$\+comodule $\N$,
the $k$\+vector spaces $\Cotor^\C_i(\N,\M)$, \ $i=0$, $-1$, $-2$,~\dots\
are defined as the right derived functors of the left exact functor of
cotensor product $\N\oc_\C\M$, constructed by replacing any one or both
of the comodules $\N$ and $\M$ by its injective coresolution, taking
the cotensor product and computing the cohomology~\cite[Sections~0.2.2
and~1.2.2]{Psemi}, \cite[Section~4.7]{Pkoszul}.

\begin{exs} \label{pseudo-coderived-comodule-examples}
 (1)~Let $\C$ be a coassociative coalgebra and $\sS$ be a class of
right $\C$\+comodules.
 Denote by $\sE\subset\sA=\C\comodl$ the full subcategory formed by
all the left $\C$\+comodules $\E$ such that $\Cotor^\C_i(\S,\E)=0$
for all $\S\in\sS$ and all $i<0$.
 Then the full subcategory $\sE\subset\C\comodl$ is a coresolving
subcategory closed under infinite direct sums.
 Thus the derived category $\sD(\sE)$ of the exact category $\sE$ is
a pseudo-coderived category of the abelian category $\C\comodl$, that
is an intermediate quotient category between the coderived category
$\sD^\co(\C\comodl)$ and the derived category $\sD(\C\comodl)$, as
explained in~\cite[Section~4]{PS2} or~\cite[Section~1]{Pps}.

\smallskip
 (2)~In particular, if $\sS=\varnothing$, then one has $\sE=\C\comodl$.
 On the other hand, if $\sS$ is the class of all right $\C$\+comodules,
then $\sE=\C\comodl_\inj$ is the full subcategory of all injective left
$\C$\+comodules.
 In fact, it suffices to take $\sS$ to be the set of all
finite-dimensional right $\C$\+comodules, or just irreducible right
$\C$\+comodules, to force $\sE=\C\comodl_\inj$
\cite[Lemma~3.1(a)]{Prev}.
 In this case, the derived category $\sD(\sE)=\Hot(\sE)$ of the (split)
exact category $\sE$ is equivalent to the coderived category of left
$\C$\+comodules $\sD^\co(\C\comodl)$
\cite[Theorem~5.4(a) or~5.5(a)]{Psemi}, \cite[Theorem~4.4(c)]{Pkoszul}.
\end{exs}

 Given a left $\C$\+comodule $\M$ and a left $\C$\+contramodule $\fT$,
the $k$\+vector spaces $\Coext_\C^i(\M,\fT)$, \ $i=0$, $-1$,
$-2$,~\dots\ are defined as the left derived functors of the right
exact functor of cohomomorphisms $\Cohom_\C(\M,\fT)$, constructed by
replacing either the comodule argument $\M$ by its injective
coresolution, or the contramodule argument $\fT$ by its projective
resolution, or both, taking the $\Cohom_\C$ and computing
the homology~\cite[Sections~0.2.5 and~3.2.2]{Psemi},
\cite[Section~4.7]{Pkoszul}.

\begin{exs} \label{pseudo-contraderived-contramodule-examples}
 (1)~Let $\D$ be a coassociative coalgebra over~$k$ and $\sS$ be
a class of left $\D$\+comodules.
 Denote by $\sF\subset\sB=\D\contra$ the full subcategory formed by
all the left $\D$\+contramodules $\F$ such that $\Coext_\D^i(\S,\F)=0$
for all $\S\in\sS$ and all $i<0$.
 Then the full subcategory $\sF\subset\D\contra$ is a resolving
subcategory closed under infinite products.
 Thus the derived category $\sD(\sF)$ of the exact category $\sF$ is
a pseudo-contraderived category of the abelian category $\D\contra$,
that is an intermediate quotient category between the contraderived
category $\sD^\ctr(\D\contra)$ and the derived category
$\sD(\D\contra)$, as explained in~\cite[Section~4]{PS2}
or~\cite[Section~1]{Pps}.

 \smallskip
 (2)~In particular, if $\sS=\varnothing$, then one has $\sF=\D\contra$.
 On the other hand, if $\sS$ is the class of all left $\D$\+comodules,
then $\sF=\D\contra_\proj$ is the full subcategory of all projective
left $\D$\+contramodules~\cite[Lemma~3.1(b)]{Prev}.
 In fact, it suffices to take $\S$ to be the set of all
finite-dimensional left $\D$\+comodules, or just irreducible left
$\D$\+comodules, to force $\sF=\D\contra_\proj$ \cite[Lemma~A.3]{Psemi}.
 In this case, the derived category $\sD(\sF)=\Hot(\sF)$ of the (split)
exact category $\sF$ is equivalent to the contraderived category of
left $\D$\+contramodules $\sD^\ctr(\D\contra)$ \cite[Theorem~5.4(b)
or~5.5(b)]{Psemi}, \cite[Theorem~4.4(d)]{Pkoszul},
\cite[Corollary~A.6.2]{Pcosh}.
\end{exs}

\Section{Auslander and Bass Classes} \label{auslander-bass-secn}

 We recall the definition of a pseudo-dualizing complex of bicomodules
from Section~\ref{introd-main-results}.
 Let $\C$ and $\D$ be coassociative coalgebras over a field~$k$.

 A \emph{pseudo-dualizing complex} $\L^\bu$ for the coalgebras $\C$ and
$\D$ is a finite complex of $\C$\+$\D$\+bicomodules satisfying
the following two conditions:
\begin{enumerate}
\renewcommand{\theenumi}{\roman{enumi}}
\setcounter{enumi}{1}
\item the complex $\L^\bu$ is strongly quasi-finitely copresented as
a complex of left $\C$\+comodules and as a complex of right
$\D$\+comodules;
\item the homothety maps
$\C^*\rarrow\Hom_{\sD^\b(\comodr\D)}(\L^\bu,\L^\bu[*])$ and
$\D^*{}^\rop\rarrow\Hom_{\sD^\b(\C\comodl)}(\L^\bu,\L^\bu[*])$
are isomorphisms of graded rings. \hbadness=1700
\end{enumerate}
 Here the condition~(ii) refers to the definition of a strongly
quasi-finitely copresented complex of comodules in
Section~\ref{quasi-finite-secn}.
 The complex $\L^\bu$ is viewed as an object of the bounded derived
category of $\C$\+$\D$\+bicomodules $\sD^\b(\C\bicomod\D)$.

 Given a $\C$\+$\D$\+bicomodule $\K$, the functor of contratensor
product $\K\ocn_\D{-}\:\D\contra\allowbreak\rarrow\C\comodl$ is left
adjoint to the functor of comodule homomorphisms $\Hom_\C(\K,{-})\:
\allowbreak\C\comodl\rarrow\D\contra$.
 Hence, in particular, the functor of contratensor product of complexes
$\L^\bu\ocn_\D{-}\:\Hot(\D\contra)\rarrow\Hot(\C\comodl)$ is left
adjoint to the functor $\Hom_\C(\L^\bu,{-})\:\Hot(\C\comodl)\rarrow
\Hot(\D\contra)$.

 We will use the existence theorem of homotopy injective resolutions
of complexes of comodules and homotopy projective resolutions of
complexes of contramodules~\cite[Theorem~2.4]{Pkoszul} in order to
work with the conventional unbounded derived categories of comodules
and contramodules $\sD(\C\comodl)$ and $\sD(\D\contra)$.
 Using the homotopy projective and homotopy injective resolutions of
the second arguments, one constructs the derived functors
$\L^\bu\ocn_\D^\boL{-}\:\sD(\D\contra)\rarrow\sD(\C\comodl)$ and
$\boR\Hom_\C(\L^\bu,{-})\:\sD(\C\comodl)\rarrow\sD(\D\contra)$.
 As a particular case of the general property of the left and right
derived functors (e.~g., in the sense of Deligne~\cite[1.2.1\+-2]{Del})
of left and right adjoint functors, the functor
$\L^\bu\ocn_\D^\boL{-}$ is left adjoint to the functor
$\boR\Hom_\C(\L^\bu,{-})$ \cite[Lemma~8.3]{Psemi}.

 We will use the following simplified notation.
 Given two complexes of left $\C$\+comod\-ules $\M^\bu$ and $\N^\bu$, we
denote by $\Ext_\C^n(\M^\bu,\N^\bu)$ the vector spaces $H^n\boR\Hom_\C
(\M^\bu,\N^\bu)\simeq\Hom_{\sD(\C\comodl)}(\M^\bu,\N^\bu[n])$ of
cohomology of the complex $\boR\Hom_\C(\M^\bu,\N^\bu)=
\Hom_\C(\M^\bu,\J^\bu)$, where $\N^\bu\rarrow\J^\bu$ is
a quasi-isomorphism of complexes of left $\C$\+comodules and $\J^\bu$
is a homotopy injective complex of left $\C$\+comodules.
 Given a complex of right $\D$\+comodules $\N^\bu$ and a complex of
left $\D$\+contramodules $\fT^\bu$, we denote by
$\Ctrtor_n^\D(\N^\bu,\fT^\bu)$ the vector spaces
$H^{-n}(\N^\bu\ocn_\D^\boL\fT^\bu)$ of cohomology of the complex
$\N^\bu\ocn_\D^\boL\fT^\bu=\N^\bu\ocn_\D\P^\bu$, where
$\P^\bu\rarrow\fT^\bu$ is a quasi-isomorphism of complexes of left
$\D$\+contramodules and $\P^\bu$ is a homotopy projective complex of
left $\D$\+contramodules. {\hbadness=2500\par}

 Suppose that the finite complex $\L^\bu$ is situated in
the cohomological degrees $-d_1\le m\le d_2$.
 Then one has $\Ext^n_\C(\L^\bu,\J)=0$ for all $n>d_1$ and all injective
left $\C$\+comodules~$\J$.
 Similarly, one has $\Ctrtor_n^\D(\L^\bu,\P)=0$ for all $n>d_1$ and all
projective left $\D$\+contramodules~$\P$.
 Choose an integer $l_1\ge d_1$ and consider the following full
subcategories in the abelian categories of left $\C$\+comodules and
$\D$\+contramodules:
\begin{itemize}
\item $\sE_{l_1}=\sE_{l_1}(\L^\bu)\subset\C\comodl$ is the full
subcategory consisting of all the $\C$\+comodules $\E$ such that
$\Ext^n_\C(\L^\bu,\E)=0$ for all $n>l_1$ and the adjunction morphism
$\L^\bu\ocn_\D^\boL\boR\Hom_\C(\L^\bu,\E)\rarrow\E$ is an isomorphism
in $\sD^-(\C\comodl)$;
\item $\sF_{l_1}=\sF_{l_1}(\L^\bu)\subset\D\contra$ is the full
subcategory consisting of all the $\D$\+contramodules $\F$ such that
$\Ctrtor_n^\D(\L^\bu,\F)=0$ for all $n>l_1$ and the adjunction morphism
$\F\rarrow\boR\Hom_\C(\L^\bu,\>\L^\bu\ocn_\D^\boL\F)$ is an isomorphism
in $\sD^+(\D\contra)$.
\end{itemize}
 Clearly, for any $l_1''\ge l_1'\ge d_1$, one has $\sE_{l_1'}\subset
\sE_{l_1''}\subset\C\comodl$ and $\sF_{l_1'}\subset\sF_{l_1''}\subset
\D\contra$.
 The category $\sF_{l_1}$ can be called the \emph{Auslander class of\/
contramodules} corresponding to a pseudo-dualizing complex $\L^\bu$,
while the category $\sE_{l_1}$ is the \emph{Bass class of comodules}.

\begin{lem} \label{coalgebra-E-F-closed-kernels-cokernels}
\textup{(a)} The full subcategory\/ $\sE_{l_1}\subset\C\comodl$ is
closed under the cokernels of injective morphisms, extensions, and
direct summands. \par
\textup{(b)} The full subcategory\/ $\sF_{l_1}\subset\D\contra$ is
closed under the kernels of surjective morphisms, extensions, and
direct summands.  \qed
\end{lem}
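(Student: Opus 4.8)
The plan is to exploit that membership in $\sE_{l_1}$ is cut out by two conditions of homological nature---the vanishing of $\Ext^n_\C(\L^\bu,{-})$ in degrees $n>l_1$, and the invertibility of the counit morphism $\L^\bu\ocn_\D^\boL\boR\Hom_\C(\L^\bu,\E)\rarrow\E$---and to verify the stability of each condition separately, using the long exact sequence for the former and the triangulated five-lemma for the latter. I would prove part~(a); part~(b) is entirely dual, interchanging $\boR\Hom_\C(\L^\bu,{-})$ with $\L^\bu\ocn_\D^\boL{-}$, the groups $\Ext^n_\C(\L^\bu,{-})$ with $\Ctrtor^\D_n(\L^\bu,{-})$, the counit with the unit, and cokernels of monomorphisms with kernels of epimorphisms.

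For the vanishing condition, a short exact sequence $0\rarrow\E'\rarrow\E\rarrow\E''\rarrow0$ of $\C$\+comodules yields the long exact sequence
$$
 \cdots\rarrow\Ext^n_\C(\L^\bu,\E')\rarrow\Ext^n_\C(\L^\bu,\E)
 \rarrow\Ext^n_\C(\L^\bu,\E'')\rarrow\Ext^{n+1}_\C(\L^\bu,\E')
 \rarrow\cdots.
$$
Reading off consecutive terms, vanishing in degrees $>l_1$ for $\E'$ and $\E''$ forces it for $\E$ (closure under extensions), and vanishing for $\E'$ and $\E$ forces it for $\E''$ (closure under cokernels of monomorphisms, since $n>l_1$ implies $n+1>l_1$). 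The remaining instance would demand $\Ext^{l_1}_\C(\L^\bu,\E'')=0$, which is unavailable---this is precisely why closure under kernels of monomorphisms is \emph{not} asserted. Closure under direct summands is immediate from additivity of $\Ext^n_\C(\L^\bu,{-})$.

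For the invertibility condition, write $\Phi=\L^\bu\ocn_\D^\boL\boR\Hom_\C(\L^\bu,{-})$ for the composite endofunctor of $\sD(\C\comodl)$ and $\varepsilon\:\Phi\rarrow\Id$ for the counit recalled before the lemma. Since $\boR\Hom_\C(\L^\bu,{-})$ and $\L^\bu\ocn_\D^\boL{-}$ are adjoint triangulated functors, $\Phi$ is triangulated and $\varepsilon$ is a morphism of triangulated functors. The short exact sequence becomes a distinguished triangle $\E'\rarrow\E\rarrow\E''\rarrow\E'[1]$, and applying $\Phi$ together with $\varepsilon$ produces a morphism of distinguished triangles with vertical arrows $\varepsilon_{\E'}$, $\varepsilon_\E$, $\varepsilon_{\E''}$; by the triangulated five-lemma, any two of these being isomorphisms forces the third, so this condition is in fact stable under all three of the operations. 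For direct summands one has $\varepsilon_{\E'\oplus\E''}=\varepsilon_{\E'}\oplus\varepsilon_{\E''}$, and a block-diagonal morphism in an additive category is invertible if and only if each block is.

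Intersecting the two conditions, $\sE_{l_1}$ is closed exactly under the operations stabilizing both, namely extensions, cokernels of monomorphisms, and direct summands, as claimed. The one point requiring care is the compatibility of the counit $\varepsilon$ with shifts and cones, so that its application to a distinguished triangle is a genuine morphism of triangles to which the triangulated five-lemma applies; this is the standard fact that the unit and counit of an adjunction between triangulated functors are themselves morphisms of triangulated functors.
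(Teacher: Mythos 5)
Your proof is correct, and it is precisely the standard argument the paper takes for granted: the lemma is stated with a \qed and no written proof, the two defining conditions of $\sE_{l_1}$ (vanishing of $\Ext^n_\C(\L^\bu,{-})$ for $n>l_1$, and invertibility of the adjunction counit) being handled exactly as you do, by the long exact cohomology sequence and by the triangulated five-lemma applied to the morphism of triangles induced by the counit of the adjoint pair $\L^\bu\ocn_\D^\boL{-}\dashv\boR\Hom_\C(\L^\bu,{-})$. The only implicit point worth noting is that ``isomorphism in $\sD^-(\C\comodl)$'' may be tested in $\sD(\C\comodl)$, where the five-lemma argument naturally lives, since the canonical functor $\sD^-\rarrow\sD$ is fully faithful.
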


 The formulation of the next lemma is similar to that
of~\cite[Lemma~3.2]{Pps}, but the proof is quite different.
 Rather, it resembles the related arguments in the proofs
of~\cite[Theorem~3.6]{Pmc} and~\cite[Theorem~4.9]{Pmgm}.

 This lemma shows, in particular, that in the case of a pseudo-dualizing
bicomodule (\,$=$~one-term complex) $\L^\bu=\L$, the pair of adjoint
functors $\Hom_\C(\L,{-})\:\C\comodl\allowbreak\rarrow\D\contra$ and
$\L\ocn_\D{-}\:\D\contra\rarrow\C\comodl$ is a ``left and right
semidualizing adjoint pair'' in the sense of~\cite[Definition~2.1]{GLT}.

\begin{lem} \label{coalgebra-contains-injectives-projectives}
\textup{(a)} The full subcategory\/ $\sE_{l_1}\subset\C\comodl$ contains
all the injective left\/ $\C$\+comodules. \par
\textup{(b)} The full subcategory\/ $\sF_{l_1}\subset\D\contra$ contains
all the projective left\/ $\D$\+contra\-modules.
\end{lem}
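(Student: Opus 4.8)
The plan is to check, for part (a), that every injective left $\C$\+comodule $\J$ satisfies the two defining conditions of $\sE_{l_1}$, and dually for part (b). I will describe (a) in detail. The first condition, $\Ext^n_\C(\L^\bu,\J)=0$ for $n>l_1$, is immediate: it was already observed that $\Ext^n_\C(\L^\bu,\J)=0$ for all $n>d_1$ and all injective $\J$, and $l_1\ge d_1$. So the whole problem is to prove that the adjunction counit $\epsilon_\J\colon\L^\bu\ocn_\D^\boL\boR\Hom_\C(\L^\bu,\J)\rarrow\J$ is an isomorphism in $\sD^-(\C\comodl)$.

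First I would reduce to cofree comodules. Since $\L^\bu\ocn_\D^\boL{-}$ and $\boR\Hom_\C(\L^\bu,{-})$ are additive and $\epsilon$ is a natural transformation, the counit of a direct summand is a direct summand of the counit of the whole; as every injective $\C$\+comodule is a direct summand of a cofree comodule $\C\ot_k V$ and $\sE_{l_1}$ is closed under direct summands by Lemma~\ref{coalgebra-E-F-closed-kernels-cokernels}(a), it suffices to treat $\J=\C\ot_k V$. For such $\J$ the cofree--forgetful adjunction together with the injectivity of $\C\ot_k V$ gives $\boR\Hom_\C(\L^\bu,\C\ot_k V)=\Hom_\C(\L^\bu,\C\ot_k V)=\Hom_k(\L^\bu,V)$ as a complex of $\D$\+contramodules. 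Representing $\L^\bu$ by a complex of bicomodules that is termwise quasi-finitely cogenerated injective \emph{both} as a complex of left $\C$\+comodules and as a complex of right $\D$\+comodules (this is what condition~(ii), strong quasi-finite copresentedness, provides), $\Hom_k(\L^\bu,V)$ becomes a bounded-below complex of quasi-finitely generated projective $\D$\+contramodules, hence homotopy projective, and $(\L^\bu)^*=\boR\Hom_\C(\L^\bu,\C)=\Hom_k(\L^\bu,k)$ likewise. Proposition~\ref{coalgebra-dualization-isomorphism-prop}(c) applied termwise then yields $\L^\bu\ocn_\D^\boL\Hom_k(\L^\bu,V)\cong(\L^\bu\ocn_\D^\boL(\L^\bu)^*)\ot_k V$, compatibly with the counits and natural in $V$. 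This reduces the assertion to the case $V=k$, i.e.\ to the counit $\epsilon_\C\colon\L^\bu\ocn_\D^\boL(\L^\bu)^*\rarrow\C$.

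The case $V=k$ is where the homothety condition enters. Here $\C$ is an injective cogenerator of $\C\comodl$ and $\Hom_\C({-},\C)={(-)}^*=\Hom_k({-},k)$ is exact and reflects acyclicity of complexes, so $\epsilon_\C$ is an isomorphism if and only if $\boR\Hom_\C(\epsilon_\C,\C)$ is. By the derived adjunction between $\L^\bu\ocn_\D^\boL{-}$ and $\boR\Hom_\C(\L^\bu,{-})$, one has $\boR\Hom_\C(\L^\bu\ocn_\D^\boL(\L^\bu)^*,\C)\cong\boR\Hom^\D((\L^\bu)^*,(\L^\bu)^*)$; dualizing a strong quasi-finitely copresented injective resolution $\L^\bu\rarrow\J^\bu$ of right $\D$\+comodules via Proposition~\ref{coalgebra-dualization-isomorphism-prop}(a)--(b) identifies this with $\boR\Hom_{\comodr\D}(\L^\bu,\L^\bu)$, whose cohomology is $\Hom_{\sD^\b(\comodr\D)}(\L^\bu,\L^\bu[*])$. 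Under these identifications the triangle identity for the adjunction makes $\boR\Hom_\C(\epsilon_\C,\C)$ into the homothety map $\C^*\rarrow\Hom_{\sD^\b(\comodr\D)}(\L^\bu,\L^\bu[*])$, which is an isomorphism by condition~(iii). Hence $\epsilon_\C$, and therefore every $\epsilon_\J$, is an isomorphism, proving~(a).

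The step I expect to be the main obstacle is not the homothety bookkeeping above but the compatibility of the two derived functors with \emph{both} comodule structures of $\L^\bu$ at once: one must choose a representative of $\L^\bu$ that is termwise adjusted for $\boR\Hom_\C$ and for $\ocn_\D^\boL$ simultaneously, so that the underived identities (cofree adjunction, Proposition~\ref{coalgebra-dualization-isomorphism-prop}(b)--(c), and dualization) actually assemble into the claimed derived isomorphisms; this is precisely the point handled by strong quasi-finite copresentedness and the homotopy-adjusted-complex machinery, and is the reason the argument cannot be purely formal. Part~(b) is dual: I would reduce to free contramodules $\Hom_k(\D,U)$ and then, via the contramodule analogue of Proposition~\ref{coalgebra-dualization-isomorphism-prop}(c), to the projective generator $\D^*=\Hom_k(\D,k)$, test the unit $\eta_\P\colon\F\rarrow\boR\Hom_\C(\L^\bu,\L^\bu\ocn_\D^\boL\F)$ by the exact forgetful functor $\Hom^\D(\D^*,{-})$, and use $\L^\bu\ocn_\D^\boL\D^*\cong\L^\bu$ together with the derived adjunction to reduce to the other homothety isomorphism $\D^*{}^\rop\cong\Hom_{\sD^\b(\C\comodl)}(\L^\bu,\L^\bu[*])$ of condition~(iii).
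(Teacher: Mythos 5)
Your overall route coincides with the paper's: reduce to the cofree comodule $\E=\C\ot_kV$, use Proposition~\ref{coalgebra-dualization-isomorphism-prop}(c) to reduce to $V=k$, pass to $k$\+linear duals, and identify the dualized counit with the composition of the homothety map and the dualization isomorphism of Proposition~\ref{coalgebra-dualization-isomorphism-prop}(b), so that condition~(iii) finishes the argument; part~(b) is dual, again as in the paper.

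Two points need correction, and the first is the one you yourself single out as the crux. Condition~(ii) does \emph{not} furnish a single complex of bicomodules whose terms are simultaneously quasi-finitely cogenerated injective left $\C$\+comodules and right $\D$\+comodules; it gives two separate quasi-isomorphisms, one in the category of complexes of right $\D$\+comodules ($\L^\bu\rarrow\I^\bu$) and one in the category of complexes of left $\C$\+comodules ($\L^\bu\rarrow\J^\bu$). Such a simultaneous two-sided resolution is only hypothesized in a remark of the paper as a simplifying assumption for dualizing complexes, and it is not needed here: in part~(a) the functor $\boR\Hom_\C(\L^\bu,{-})$ is computed on the injective object $\E$ without replacing $\L^\bu$ at all, and $\L^\bu\ocn_\D^\boL{-}$ is computed by resolving its \emph{second} argument, for which the one-sided resolution suffices --- one contratensors $\L^\bu$ itself, carrying its left $\C$\+coaction, against the complex of projective contramodules $\Hom_k(\I^\bu,V)$. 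So the ``main obstacle'' you describe is not an obstacle, and the resolution you invoke to dispose of it is unavailable in general. Second, since $\I^\bu$ is bounded below and $\Hom_k({-},V)$ reverses the cohomological grading, $\Hom_k(\I^\bu,V)$ is a bounded \emph{above} complex of projective contramodules; that is what makes it homotopy projective (a bounded below complex of projectives need not be). With these repairs your argument is the paper's argument.
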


\begin{proof}
 Part~(a): we have to check that for any injective left $\C$\+comodule
$\E$ the adjunction morphism $\L^\bu\ocn_\D^\boL\boR\Hom_\C(\L^\bu,\E)=
\L^\bu\ocn_\D^\boL\Hom_\C(\L^\bu,\E)\rarrow\E$ is a quasi-isomorphism.
 It suffices to consider the case of a cofree left $\C$\+comodule
$\E=\C\ot_kV$, where $V$ is a $k$\+vector space.
 Then one has $\Hom_\C(\L^\bu,\E)\simeq\Hom_k(\L^\bu,V)$.

 According to the condition~(ii), there exists a bounded below complex
of quasi-finitely cogenerated injective right $\D$\+comodules $\I^\bu$
endowed with a quasi-isomorphism of complexes of right $\D$\+comodules
$\L^\bu\rarrow\I^\bu$.
 Then we have a quasi-isomorphism of complexes of left
$\D$\+contramodules $\Hom_k(\I^\bu,V)\rarrow\Hom_k(\L^\bu,V)$, and
$\Hom_k(\I^\bu,V)$ is a bounded above complex of projective left
$\D$\+contramodules.
 Hence $\L^\bu\ocn_\D^\boL\Hom_k(\L^\bu,V)=\L^\bu\ocn_\D
\Hom_k(\I^\bu,V)$, and it remains to show that the morphism of
complexes of left $\C$\+comodules
\begin{equation} \label{coalgebra-adjunction-counit-morphism}
 \L^\bu\ocn_\D\Hom_k(\I^\bu,V)\lrarrow\C\ot_kV
\end{equation}
is a quasi-isomorphism.
 The morphism~\eqref{coalgebra-adjunction-counit-morphism} is
constructed in terms of the morphism of complexes of right
$\D$\+comodules $\L^\bu\rarrow\I^\bu$ and the left $\C$\+coaction in
the complex~$\L^\bu$.

 In particular, substituting $V=k$
into~\eqref{coalgebra-adjunction-counit-morphism}, we have a morphism
of complexes of left $\C$\+comodules
\begin{equation} \label{coalgebra-antidualized-c-homothety-map}
 \L^\bu\ocn_\D\I^\bu{}^*\lrarrow\C.
\end{equation}
 Passing to the dual vector spaces
in~\eqref{coalgebra-antidualized-c-homothety-map}, we obtain a map
$\C^*\rarrow\Hom^\D(\I^\bu{}^*,\L^\bu{}^*)$, which is equal to
the composition of the homothety map $\C^*\rarrow\Hom_\D(\L^\bu,\I^\bu)
=\boR\Hom_\D(\L^\bu,\L^\bu)$ with the dualization map
$\Hom_\D(\L^\bu,\I^\bu)\rarrow\Hom^\D(\I^\bu{}^*,\L^\bu{}^*)$.

 As the homothety map is a quasi-isomorphism by the condition~(iii)
and the dualization map is an isomorphism of complexes by
Proposition~\ref{coalgebra-dualization-isomorphism-prop}(b)
(because $\I^\bu$ is a complex of quasi-finitely cogenerated
right $\D$\+comodules, while $\L^\bu$ is a finite complex),
it follows that passing to the dual vector spaces
in~\eqref{coalgebra-antidualized-c-homothety-map} produces
a quasi-isomorphism.
 Hence the map~\eqref{coalgebra-antidualized-c-homothety-map}
is a quasi-isomorphism, too.
 Finally, by
Proposition~\ref{coalgebra-dualization-isomorphism-prop}(c)
the natural map $(\L^\bu\ocn_\D\I^\bu{}^*)\ot_kV\rarrow
\L^\bu\ocn_\D\Hom_k(\I^\bu,V)$ is an isomorphism of complexes.
 Therefore, the map~\eqref{coalgebra-adjunction-counit-morphism}
is also a quasi-isomorphism.

 Part~(b): we have to check that for any projective left
$\D$\+contramodule $\F$ the adjunction morphism
$\F\rarrow\boR\Hom_\C(\L^\bu,\>\L^\bu\ocn_\D^\boL\F)=
\boR\Hom_\C(\L^\bu,\>\L^\bu\ocn_\D\F)$ is a quasi-isomorphism.
 It suffices to consider the case of a free left $\D$\+contramodule
$\F=\Hom_k(\D,V)$, where $V$ is a $k$\+vector space.
 Then one has $\L^\bu\ocn_\D\Hom_k(\D,V)\simeq\L^\bu\ot_kV$.

 According to the condition~(ii), there exists a bounded below complex
of quasi-finitely cogenerated injective left $\C$\+comodules $\J^\bu$
endowed with a quasi-isomorphism of complexes of left $\C$\+comodules
$\L^\bu\rarrow\J^\bu$.
 Then $\boR\Hom_\C(\L^\bu,\>\L^\bu\ot_kV)=\Hom_\C(\L^\bu,\>\J^\bu\ot_kV)$,
and it remains to show that the morphism of complexes of left
$\D$\+contramodules
\begin{equation} \label{coalgebra-adjunction-unit-morphism}
 \Hom_k(\D,V)\lrarrow\Hom_\C(\L^\bu,\>\J^\bu\ot_kV)
\end{equation}
is a quasi-isomorphism.
 The morphism~\eqref{coalgebra-adjunction-unit-morphism} is
constructed in terms of the morphism of complexes of left
$\C$\+comodules $\L^\bu\rarrow\J^\bu$ and the right $\D$\+coaction
in the complex~$\L^\bu$.

 In the same way as in the proof of part~(a), one deduces from
the condition~(iii) using
Proposition~\ref{coalgebra-dualization-isomorphism-prop}(b)
that the natural morphism of complexes of right $\D$\+comodules
\begin{equation} \label{coalgebra-antidualized-d-homothety-map}
 \L^\bu\ocn_{\C^\rop}\J^\bu{}^*\lrarrow\D
\end{equation}
is a quasi-isomorphism.
 Applying the functor $\Hom_k({-},V)$
to~\eqref{coalgebra-antidualized-d-homothety-map}, we see that
the natural map
\begin{equation} \label{coalgebra-adjunction-unit-morphism-rewritten}
\Hom_k(\D,V)\lrarrow\Hom_k(\L^\bu\ocn_{\C^\rop}\J^\bu{}^*,\>V)\.\simeq\.
\Hom^{\C^\rop}(\J^\bu{}^*,\Hom_k(\L^\bu,V))
\end{equation}
is a quasi-isomorphism, too.
 It remains to use 
Proposition~\ref{coalgebra-dualization-isomorphism-prop}(b) again
in order to identify the right-hand sides
of~\eqref{coalgebra-adjunction-unit-morphism}
and~\eqref{coalgebra-adjunction-unit-morphism-rewritten}.
\end{proof}

\begin{rem} \label{coalgebra-closed-sums-products-remark}
 It would be interesting to know whether the analogue
of~\cite[Lemma~3.3]{Pps} holds in our present context, i.~e., whether
the Bass class of comodules $\sE_{l_1}\subset\C\comodl$ is always closed
under infinite direct sums and whether the Auslander class of
contramodules $\sF_{l_1}\subset\D\contra$ is closed under
infinite products.
 A positive answer to these questions would allow to strengthen our
main results in the context of the diagram~\eqref{main-results-diagram}
in Section~\ref{introd-main-results}, as discussed
in Sections~\ref{introd-pseudo-derived}\+-\ref{introd-t-structures}
(cf.~\cite[Sections~0.5 and~0.8]{Pps}).
 Notice that the class of all injective objects in $\C\comodl$ is
always closed under direct sums, and the class of all projective objects
in $\D\contra$ is closed under products~\cite[Section~1.2]{Prev}.
 By the previous lemma, all such injective objects belong to $\sE_{l_1}$,
and all such projective objects belong to $\sF_{l_1}$.
\end{rem}

 In the rest of this section, as well as in the next
Sections~\ref{abstract-classes-secn}\+-\ref{minimal-classes-secn},
we largely follow the exposition in~\cite[Sections~3\+-5]{Pps} with
obvious minimal variations.
 Most proofs are omitted and replaced with references to~\cite{Pps}.

\begin{lem} \label{bass-auslander-complexes}
\textup{(a)} Let\/ $\M^\bu$ be a complex of left\/ $\C$\+comodules
concentrated in the cohomological degrees $-n_1\le m\le n_2$.
 Then\/ $\M^\bu$ is quasi-isomorphic to a complex of left\/
$\C$\+comodules concentrated in the cohomological degrees $-n_1\le m
\le n_2$ with the terms belonging to the full subcategory\/
$\sE_{l_1}\subset\C\comodl$ if and only if\/
$\Ext_\C^n(\L^\bu,\M^\bu)=0$ for $n>n_2+l$ and the adjunction morphism\/
$\L^\bu\ocn_\D^\boL\boR\Hom_\C(\L^\bu,\M^\bu)\rarrow\M^\bu$
is an isomorphism in\/ $\sD^-(\C\comodl)$. \par
\textup{(b)} Let\/ $\fT^\bu$ be a complex of left\/ $\D$\+contramodules
concentrated in the cohomological degrees\/ $-n_1\le m\le n_2$.
 Then\/ $\fT^\bu$ is quasi-isomorphic to a complex of left\/
$\D$\+contramodules concentrated in the cohomological degrees
$-n_1\le m\le n_2$ with the terms belonging to the full subcategory\/
$\sF_{l_1}\subset\D\contra$ if and only if\/
$\Ctrtor^\D_n(\L^\bu,\fT^\bu)=0$ for $n>n_1+l_1$ and the adjunction
morphism\/ $\fT^\bu\rarrow\boR\Hom_\C(\L^\bu,\>
\L^\bu\ocn^\boL_\D\fT^\bu)$ is an isomorphism in\/
$\sD^+(\D\contra)$.
\end{lem}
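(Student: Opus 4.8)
The plan is to prove part~(a) in detail and to deduce part~(b) by the evident dual argument, in which injective coresolutions of $\C$\+comodules are replaced by projective resolutions of $\D$\+contramodules, the functor $\boR\Hom_\C(\L^\bu,{-})$ by $\L^\bu\ocn_\D^\boL{-}$, and truncations from above by truncations from below. The two standing inputs are Lemma~\ref{coalgebra-contains-injectives-projectives}(a), by which every injective $\C$\+comodule belongs to $\sE_{l_1}$, together with the vanishing $\Ext_\C^n(\L^\bu,\J)=0$ for $n>d_1$ and injective~$\J$ recorded before the statement (so this holds a fortiori for $n>l_1\ge d_1$); and the fact that both $\boR\Hom_\C(\L^\bu,{-})$ and the counit transformation $\L^\bu\ocn_\D^\boL\boR\Hom_\C(\L^\bu,{-})\rarrow\Id$ are triangulated, hence carry distinguished triangles to distinguished triangles.

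For the ``only if'' direction, suppose $\M^\bu$ is represented by a complex $\E^\bu$ with terms $\E^m\in\sE_{l_1}$ in the degrees $-n_1\le m\le n_2$. Filtering $\E^\bu$ by its brutal truncations exhibits it as built from the one-term complexes $\E^m[-m]$ by finitely many cones. Since $\Ext_\C^n(\L^\bu,\E^m)=0$ for $n>l_1$, a count of degrees through the resulting long exact sequences gives $\Ext_\C^n(\L^\bu,\E^\bu)=0$ for $n>n_2+l_1$, the extreme contribution coming from the top term $\E^{n_2}[-n_2]$. The counit morphism is an isomorphism on each $\E^m$ by the definition of $\sE_{l_1}$, hence on each $\E^m[-m]$, and therefore on $\E^\bu$ by repeated application of the five lemma.

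The substance of the lemma is the ``if'' direction, which I would prove by a truncated injective coresolution. As $\M^\bu$ lives in the degrees $\ge-n_1$, choose a quasi-isomorphism $\M^\bu\rarrow\J^\bu$ into a complex of injective $\C$\+comodules with $\J^m=0$ for $m<-n_1$; since $H^m(\J^\bu)=H^m(\M^\bu)=0$ for $m>n_2$, the complex $\J^\bu$ is exact in the degrees $>n_2$. Put $\E=\ker(\J^{n_2}\rarrow\J^{n_2+1})$ and form the canonical truncation $[\,\J^{-n_1}\rarrow\dotsb\rarrow\J^{n_2-1}\rarrow\E\,]$, which is concentrated in the degrees $-n_1\le m\le n_2$ and still quasi-isomorphic to $\M^\bu$. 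Its terms $\J^m$ with $m<n_2$ are injective, hence lie in $\sE_{l_1}$, so everything reduces to proving $\E\in\sE_{l_1}$. The brutal truncation short exact sequence of this complex places $\E$ in degree $n_2$ inside a distinguished triangle $\E[-n_2]\rarrow\M^\bu\rarrow Q^\bu\rarrow\E[1-n_2]$, where $Q^\bu=[\,\J^{-n_1}\rarrow\dotsb\rarrow\J^{n_2-1}\,]$ is a bounded complex of injectives in the degrees $\le n_2-1$. Applying $\boR\Hom_\C(\L^\bu,{-})$, the complex $Q^\bu$ satisfies $\Ext_\C^n(\L^\bu,Q^\bu)=0$ for $n>n_2+l_1-1$, while the hypothesis gives $\Ext_\C^n(\L^\bu,\M^\bu)=0$ for $n>n_2+l_1$; since $\Ext_\C^n(\L^\bu,\E[-n_2])=\Ext_\C^{n-n_2}(\L^\bu,\E)$, the long exact sequence forces $\Ext_\C^m(\L^\bu,\E)=0$ for $m>l_1$, the first defining condition of $\sE_{l_1}$. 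Comparing counit morphisms along the same triangle---an isomorphism on $\M^\bu$ by hypothesis and on $Q^\bu$ by the ``only if'' argument already given---shows it is an isomorphism on $\E[-n_2]$, hence on $\E$, by the five lemma. Thus $\E\in\sE_{l_1}$, and this completes~(a).

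I expect the principal difficulty to be bookkeeping rather than conceptual: one must track the homological shifts so that the two vanishing bounds, namely $n>n_2+l_1$ for $\M^\bu$ and $n>n_2+l_1-1$ for the bounded complex of injectives, combine into the sharp range $m>l_1$ for the single syzygy comodule~$\E$; and dually in part~(b) one must arrange the projective resolution and its truncation from below so that the bound $n>n_1+l_1$ on $\Ctrtor_n^\D(\L^\bu,\fT^\bu)$ descends to $n>l_1$ for the bottom syzygy contramodule (the cokernel of the differential into the term in degree $-n_1$). It is also worth checking at the outset that a bounded-below complex in $\C\comodl$ admits an injective coresolution with the same lower bound, which is standard in this Grothendieck category.
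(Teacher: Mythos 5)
Your argument is correct, and it is essentially the proof the paper has in mind: the paper simply defers to the dual statement \cite[Lemma~3.4]{Pps}, whose argument is exactly this combination of Lemma~\ref{coalgebra-contains-injectives-projectives}, the canonical truncation of a bounded-below injective coresolution, and the two-out-of-three property for the counit along the brutal-truncation triangle isolating the top syzygy object. Your degree bookkeeping (the bound $n>n_2+l_1-1$ for the injective part versus $n>n_2+l_1$ for $\M^\bu$, yielding $m>l_1$ for the syzygy comodule $\E$) is accurate, as is the dualization for part~(b).
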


\begin{proof}
 Similar to~\cite[Lemma~3.4]{Pps}.
\end{proof}

 It follows from Lemma~\ref{bass-auslander-complexes} that the full
subcategory $\sD^\b(\sE_{l_1})\subset\sD(\C\comodl)$ consists of all
the complexes of left $\C$\+comodules $\M^\bu$ with bounded cohomology
such that the complex $\boR\Hom_\C(\L^\bu,\M^\bu)$ also has bounded 
cohomology and the adjunction morphism $\L^\bu\ocn_\D^\boL
\boR\Hom_\C(\L^\bu,\M^\bu)\rarrow\M^\bu$ is an isomorphism.
 Similarly, the full subcategory $\sD^\b(\sF_{l_1})\subset
\sD(\D\contra)$ consists of all the complexes of
left $\D$\+contramodules $\fT^\bu$ with bounded cohomology such that
the complex $\L^\bu\ocn_\D^\boL\fT^\bu$ also has bounded cohomology
and the adjunction morphism $\fT^\bu\rarrow\boR\Hom_\C(\L^\bu,\>
\L^\bu\ocn^\boL_\D\fT^\bu)$ is an isomorphism.

 These two full subcategories can be called the \emph{derived Bass class
of comodules} and the \emph{derived Auslander class of contramodules}.
 General category-theoretic consideration with adjoint
functors~\cite[Theorem~1.1]{FJ} (cf.~\cite[Proposition~2.1]{GLT})
show that the functors
$\boR\Hom_\C(\L^\bu,{-})$ and $\L^\bu\ocn_\D^\boL{-}$ restrict to
a triangulated equivalence between the derived Auslander
and Bass classes,
\begin{equation} \label{bounded-derived-bass-auslander-equi}
 \sD^\b(\sE_{l_1})\simeq\sD^\b(\sF_{l_1}).
\end{equation}

\begin{lem} \label{bass-auslander-taken-into-each-other}
\textup{(a)} For any\/ $\C$\+comodule\/ $\E\in\sE_{l_1}$, the object\/
$\boR\Hom_\C(\L^\bu,\E)\in\sD^\b(\D\contra)$ can be represented by
a complex of\/ $\D$\+contramodules concentrated in the cohomological
degrees $-d_2\le m\le l_1$ with the terms belonging to\/~$\sF_{l_1}$.
\par
\textup{(b)} For any\/ $\D$\+contramodule\/ $\F\in\sF_{l_1}$,
the object\/ $\L^\bu\ocn_\D^\boL\F\in\sD^\b(\C\comodl)$ can be
represented by a complex of $\C$\+comodules concentrated in
the cohomological degrees $-l_1\le m\le d_2$ with the terms belonging
to\/~$\sE_{l_1}$.
\end{lem}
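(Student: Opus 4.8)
The plan is to prove part~(a), part~(b) being dual. The idea is to compute the cohomological amplitude of $\boR\Hom_\C(\L^\bu,\E)$ directly, observe that this object lies in the derived Auslander class, and then invoke Lemma~\ref{bass-auslander-complexes}(b) to replace it by an honest complex with terms in $\sF_{l_1}$ sitting in the prescribed degrees. Throughout I will use the adjoint pair $\L^\bu\ocn_\D^\boL{-}\dashv\boR\Hom_\C(\L^\bu,{-})$, with unit $\eta$ and counit $\epsilon$.

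First I would pin down the cohomology bounds. Using an injective coresolution $\E\rarrow\J^\bu$ concentrated in degrees~$\ge0$ and the fact that $\L^\bu$ lives in degrees $-d_1\le m\le d_2$, the total complex $\Hom_\C(\L^\bu,\J^\bu)$ vanishes in degrees below $-d_2$; hence $\Ext^n_\C(\L^\bu,\E)=0$ for $n<-d_2$. On the other hand, the hypothesis $\E\in\sE_{l_1}$ gives $\Ext^n_\C(\L^\bu,\E)=0$ for $n>l_1$. Thus $\boR\Hom_\C(\L^\bu,\E)\in\sD(\D\contra)$ has cohomology concentrated in the degrees $-d_2\le m\le l_1$, and by the standard canonical (co)truncation it is represented by an actual complex $\fT^\bu$ of $\D$-contramodules concentrated in these very degrees.

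Next I would verify the two hypotheses of Lemma~\ref{bass-auslander-complexes}(b) for $\fT^\bu$ (with $n_1=d_2$, $n_2=l_1$). Since $\E\in\sE_{l_1}$, the counit $\epsilon_\E\:\L^\bu\ocn_\D^\boL\boR\Hom_\C(\L^\bu,\E)\rarrow\E$ is an isomorphism, so $\L^\bu\ocn_\D^\boL\fT^\bu\simeq\E$ is concentrated in degree~$0$; in particular $\Ctrtor^\D_n(\L^\bu,\fT^\bu)=0$ for all $n\ne0$, which covers the required vanishing for $n>d_2+l_1$. For the adjunction condition I would use the triangle identity $\boR\Hom_\C(\L^\bu,\epsilon_\E)\circ\eta_{\fT^\bu}=\id_{\fT^\bu}$: as $\epsilon_\E$ is an isomorphism, so is $\boR\Hom_\C(\L^\bu,\epsilon_\E)$, forcing the unit $\eta_{\fT^\bu}\:\fT^\bu\rarrow\boR\Hom_\C(\L^\bu,\>\L^\bu\ocn_\D^\boL\fT^\bu)$ to be an isomorphism as well. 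Lemma~\ref{bass-auslander-complexes}(b) then yields the desired representative with terms in $\sF_{l_1}$ concentrated in degrees $-d_2\le m\le l_1$. (Equivalently, one may simply note that $\boR\Hom_\C(\L^\bu,\E)$ lands in $\sD^\b(\sF_{l_1})$ under the equivalence~\eqref{bounded-derived-bass-auslander-equi}, which already encodes both conditions.)

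Part~(b) is entirely dual: for $\F\in\sF_{l_1}$ one bounds the cohomology of $\L^\bu\ocn_\D^\boL\F$ in the degrees $-l_1\le m\le d_2$, the upper bound coming from the amplitude of $\L^\bu$ against a projective resolution of $\F$ in degrees~$\le0$ and the lower bound from $\Ctrtor^\D_n(\L^\bu,\F)=0$ for $n>l_1$; then one uses $\F\in\sF_{l_1}$ (so $\eta_\F$ is an isomorphism) together with the complementary triangle identity $\epsilon_{\L^\bu\ocn_\D^\boL\F}\circ(\L^\bu\ocn_\D^\boL\eta_\F)=\id$, and concludes by Lemma~\ref{bass-auslander-complexes}(a). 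The conceptual content is delegated to Lemma~\ref{bass-auslander-complexes} and to the defining adjunction properties of $\sE_{l_1}$ and $\sF_{l_1}$; the only genuinely delicate point in either part is getting the numerical degree bounds to land exactly at $-d_2\le m\le l_1$ and $-l_1\le m\le d_2$, which is where the amplitude of $\L^\bu$ and the degree of the vanishing thresholds must be tracked carefully.
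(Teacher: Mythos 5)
Your proposal is correct and follows essentially the same route as the paper, whose proof defers to the analogous \cite[Lemma~3.5]{Pps}: one bounds the cohomological amplitude of $\boR\Hom_\C(\L^\bu,\E)$ (resp.\ $\L^\bu\ocn_\D^\boL\F$) using the amplitude of $\L^\bu$ together with the vanishing built into the definition of $\sE_{l_1}$ (resp.\ $\sF_{l_1}$), and then applies Lemma~\ref{bass-auslander-complexes}, checking its two hypotheses via the triangle identities for the adjunction $\L^\bu\ocn_\D^\boL{-}\dashv\boR\Hom_\C(\L^\bu,{-})$. The degree bookkeeping ($-d_2\le m\le l_1$ and $-l_1\le m\le d_2$) and the one-sided-inverse argument showing that the unit is an isomorphism whenever the counit is (and vice versa) are both carried out correctly.
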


\begin{proof}
 Similar to~\cite[Lemma~3.5]{Pps}.
\end{proof}

 The definitions and discussions of the \emph{coresolution dimension}
of objects of an exact category $\sA$ with respect to its coresolving
subcategory $\sE$ and the \emph{resolution dimension} of objects of
an exact category $\sB$ with respect to its resolving subcategory $\sF$
can be found in~\cite[Section~2]{St} or~\cite[Section~A.5]{Pcosh}
(the terminology in the latter reference is the \emph{right\/
$\sE$\+homological dimension} and \emph{left\/ $\sF$\+homological
dimension}).

\begin{lem} \label{auslander-bass-co-resolution-dimension}
\textup{(a)} For any integers $l_1''\ge l_1'\ge d_1$, the full
subcategory\/ $\sE_{l_1''}\subset\C\comodl$ consists precisely of all
the left\/ $\C$\+comodules whose\/ $\sE_{l_1'}$\+coresolution dimension
does not exceed $l_1''-l_1'$. \par
\textup{(b)} For any integers $l_1''\ge l_1'\ge d_1$, the full
subcategory\/ $\sF_{l_1''}\subset\D\contra$ consists precisely of all
the left\/ $\D$\+contramodules whose\/ $\sF_{l_1'}$\+resolution
dimension does not exceed $l_1''-l_1'$.
\end{lem}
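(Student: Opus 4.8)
The plan is to prove part~(a); part~(b) is dual, obtained by reversing all arrows, replacing injective $\C$\+comodules by projective $\D$\+contramodules, the groups $\Ext^n_\C(\L^\bu,{-})$ by $\Ctrtor_n^\D(\L^\bu,{-})$, and the counit adjunction by the unit adjunction. First I would record that $\sE_{l_1'}\subset\C\comodl$ is genuinely coresolving: it is closed under cokernels of admissible monomorphisms and extensions by Lemma~\ref{coalgebra-E-F-closed-kernels-cokernels}(a), contains the injectives by Lemma~\ref{coalgebra-contains-injectives-projectives}(a), and every comodule embeds into an injective one; hence the $\sE_{l_1'}$\+coresolution dimension is well defined (see~\cite{St,Pcosh}). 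The key observation is that the only difference between membership in $\sE_{l_1'}$ and in $\sE_{l_1''}$ is the degree bound in the vanishing condition $\Ext^n_\C(\L^\bu,{-})=0$; the adjunction-isomorphism condition does not refer to $l_1$ at all. The whole statement will therefore follow from a dimension-shifting induction on the difference $l_1''-l_1'$, applied to short exact sequences of comodules.

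For the ``if'' inclusion I would prove, by induction on $r\ge0$, that every comodule $\M$ of $\sE_{l_1'}$\+coresolution dimension $\le r$ lies in $\sE_{l_1'+r}$; taking $r=l_1''-l_1'$ then gives the claim. The case $r=0$ is immediate. For $r\ge1$ I break off a short exact sequence $0\rarrow\M\rarrow\E\rarrow\M'\rarrow0$ with $\E\in\sE_{l_1'}$ and $\M'$ of coresolution dimension $\le r-1$, so $\M'\in\sE_{l_1'+r-1}$ by the inductive hypothesis. The long exact sequence of $\Ext^n_\C(\L^\bu,{-})$ then forces $\Ext^n_\C(\L^\bu,\M)=0$ for $n>l_1'+r$, since the neighbouring terms $\Ext^{n-1}_\C(\L^\bu,\M')$ and $\Ext^n_\C(\L^\bu,\E)$ vanish in that range. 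For the adjunction condition I apply the triangulated functor $\L^\bu\ocn_\D^\boL\boR\Hom_\C(\L^\bu,{-})$ to the triangle coming from the short exact sequence; naturality of the counit yields a morphism of distinguished triangles onto $\M\rarrow\E\rarrow\M'\rarrow$, whose components at $\E$ and $\M'$ are isomorphisms, so the component at $\M$ is an isomorphism by two-out-of-three. Hence $\M\in\sE_{l_1'+r}$.

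For the ``only if'' inclusion I would argue, again by induction on $l_1''-l_1'$, that any $\M\in\sE_{l_1''}$ has $\sE_{l_1'}$\+coresolution dimension $\le l_1''-l_1'$. When $l_1''=l_1'$ this is just $\M\in\sE_{l_1'}$. Otherwise I choose an embedding $0\rarrow\M\rarrow\J\rarrow\M'\rarrow0$ into an injective comodule $\J$, so $\J\in\sE_{l_1'}$. The crucial numerical point is that $l_1''>d_1$ in this case (because $l_1''\ge l_1'+1\ge d_1+1$), so that $\Ext^n_\C(\L^\bu,\J)=0$ for all $n\ge l_1''$; the long exact sequence then gives $\Ext^n_\C(\L^\bu,\M')=0$ for $n>l_1''-1$, while the morphism-of-triangles argument again supplies the adjunction isomorphism for $\M'$. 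Thus $\M'\in\sE_{l_1''-1}$; by the inductive hypothesis $\M'$ has coresolution dimension $\le l_1''-1-l_1'$, and splicing in $\J$ produces an $\sE_{l_1'}$\+coresolution of $\M$ of length $\le l_1''-l_1'$.

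The routine part is the $\Ext$-bookkeeping, where one need only check that all relevant degrees fall into the vanishing ranges; here the hypothesis $l_1',l_1''\ge d_1$ is exactly what makes the injective term $\Ext^n_\C(\L^\bu,\J)$ disappear. The step I expect to require the most care is the handling of the adjunction-isomorphism condition: one must make sure that the counit $\L^\bu\ocn_\D^\boL\boR\Hom_\C(\L^\bu,{-})\rarrow\Id$ really is a natural transformation of triangulated functors, so that it carries the triangle of a short exact sequence to a morphism of distinguished triangles and the two-out-of-three principle applies. Granting this, the two inclusions combine into the asserted description of $\sE_{l_1''}$, and the dual argument with $\L^\bu\ocn_\D^\boL{-}$, projective contramodules, and $\Ctrtor_n^\D(\L^\bu,{-})$ yields part~(b).
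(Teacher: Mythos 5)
Your proposal is correct and follows essentially the same route as the paper, which simply defers to the module analogue \cite[Lemma~3.6]{Pps}: both the Ext-vanishing bookkeeping via long exact sequences and the two-out-of-three argument for the adjunction counit applied to the triangle of a short exact sequence are exactly the dimension-shifting mechanism used there. Your numerical checks (in particular that $l_1''\ge d_1+1$ makes $\Ext^n_\C(\L^\bu,\J)$ vanish in the relevant range, by Lemma~\ref{coalgebra-contains-injectives-projectives}) are the right ones, so nothing is missing.
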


\begin{proof}
 Similar to~\cite[Lemma~3.6]{Pps}.
\end{proof}

\begin{rem}
 It follows from Lemmas~\ref{coalgebra-contains-injectives-projectives}
and~\ref{auslander-bass-co-resolution-dimension} that, for any finite
$n\ge0$, all the left $\C$\+comodules of injective dimension~$\le n$
belong to $\sE_{d_1+n}$ and all the left $\D$\+contramodules of
projective dimension~$\le n$ belong to~$\sF_{d_1+n}$.
\end{rem}

 We refer to~\cite[Section~A.1]{Pcosh} or~\cite[Appendix~A]{Pmgm} for
the definitions of the absolute derived categories appearing in
the next proposition.

\begin{prop} \label{maximal-classes-essentially-the-same}
 For any $l_1''\ge l_1'\ge d_1$ and any conventional or absolute derived
category symbol\/ $\star=\b$, $+$, $-$, $\varnothing$, $\abs+$, $\abs-$,
or\/~$\abs$, the exact inclusion functors of the Bass/Auslander classes
of co/contramodules with varying parameter,
$\sE_{l_1'}\rarrow\sE_{l_1''}$ and\/ $\sF_{l_1'}\rarrow\sF_{l_1''}$,
induce triangulated equivalences
$$
 \sD^\star(\sE_{l_1'})\simeq\sD^\star(\sE_{l_1''})
 \quad\text{and}\quad
 \sD^\star(\sF_{l_1'})\simeq\sD^\star(\sF_{l_1''}).
$$
\end{prop}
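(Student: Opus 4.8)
The plan is to reduce the proposition to a single general statement about exact categories and then prove that statement by dévissage on the (co)resolution dimension. By Lemmas~\ref{coalgebra-E-F-closed-kernels-cokernels}(a) and~\ref{coalgebra-contains-injectives-projectives}(a), the full subcategory $\sE_{l_1'}$ is coresolving inside the exact category $\sE_{l_1''}$: it is closed under extensions and cokernels of admissible monomorphisms, and since it contains the injective $\C$\+comodules, every object of $\sE_{l_1''}$ embeds into an object of $\sE_{l_1'}$ with admissible cokernel in $\sE_{l_1''}$. Dually, $\sF_{l_1'}$ is resolving in $\sF_{l_1''}$. The decisive extra input is Lemma~\ref{auslander-bass-co-resolution-dimension}: every object of $\sE_{l_1''}$ has $\sE_{l_1'}$\+coresolution dimension at most $d:=l_1''-l_1'$, and every object of $\sF_{l_1''}$ has $\sF_{l_1'}$\+resolution dimension at most~$d$. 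It therefore suffices to prove the abstract assertion: if $\sU\subset\sV$ is a coresolving subcategory of exact categories such that every object of $\sV$ has $\sU$\+coresolution dimension bounded by a fixed integer~$d$, then the exact inclusion induces a triangulated equivalence $\sD^\star(\sU)\simeq\sD^\star(\sV)$ for every symbol $\star=\b,+,-,\varnothing,\abs+,\abs-,\abs$. I would prove the comodule ($\sE$) case and obtain the contramodule ($\sF$) case by the formally dual argument.

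The core is a dévissage lemma: a complex $U^\bu$ with terms in $\sU$ that is acyclic as a complex in the ambient category $\sV$ is already acyclic in $\sU$. Writing $Z^m=\mathrm{im}(U^{m-1}\rarrow U^m)$, acyclicity in $\sV$ gives admissible short exact sequences $0\rarrow Z^m\rarrow U^m\rarrow Z^{m+1}\rarrow0$. Splicing $d$ of them produces an exact sequence $0\rarrow Z^m\rarrow U^m\rarrow\dotsb\rarrow U^{m+d-1}\rarrow Z^{m+d}\rarrow0$, that is, a length\+$d$ coresolution of $Z^m$ by objects of $\sU$. Since $Z^m\in\sV$ has $\sU$\+coresolution dimension at most~$d$, the standard dimension\+shifting property of the coresolution dimension (see \cite[Section~2]{St} or \cite[Section~A.5]{Pcosh}) forces the $d$\+th cosyzygy $Z^{m+d}$ to lie in $\sU$. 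As $m$ is arbitrary, all cocycles $Z^j$ belong to $\sU$, so $U^\bu$ is acyclic in the exact category $\sU$; the converse implication is trivial. The same argument, applied to totalizations of short exact sequences, shows that the class of $\star$\+acyclic complexes in $\sU$ is exactly the preimage of the $\star$\+acyclic complexes in $\sV$ for the coacyclic and absolutely acyclic variants as well, which yields fully faithfulness of $\sD^\star(\sU)\rarrow\sD^\star(\sV)$.

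For essential surjectivity one must represent every object of $\sD^\star(\sV)$ by a complex with terms in $\sU$. For $\star=+$ this is a routine termwise coresolution. For the bounded\+above, unbounded, and absolute symbols, a naive coresolution would be unbounded on the wrong side, and this is precisely where the uniform bound~$d$ is indispensable: one truncates each termwise coresolution at length~$d$, using the dimension\+shifting property to place the $d$\+th cosyzygy back in $\sU$, and splices the resulting finite coresolutions into a total complex with terms in $\sU$ that is quasi\+isomorphic to the original complex in~$\sV$. Together with the dévissage of the preceding paragraph, this establishes that the inclusion induces the desired equivalences $\sD^\star(\sE_{l_1'})\simeq\sD^\star(\sE_{l_1''})$ and, dually, $\sD^\star(\sF_{l_1'})\simeq\sD^\star(\sF_{l_1''})$.

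The main obstacle is the treatment of the unbounded and absolute derived categories, where the classical bounded\+below coresolution machinery is unavailable. The entire argument rests on replacing infinite coresolutions by finite ones of length~$\le d$ and on verifying that this truncation\+and\+splicing procedure is compatible with the (co)totalization operations defining coacyclicity and absolute acyclicity. Making the dévissage precise for $\star=\abs$, $\abs+$, $\abs-$ — namely, showing that the class of absolutely acyclic complexes in $\sU$ is the \emph{full} preimage of the corresponding class in $\sV$ — is the most delicate bookkeeping, and it is exactly the point at which the \emph{uniform} bound on the coresolution dimension, rather than mere pointwise finiteness, is essential.
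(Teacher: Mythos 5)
Your reduction is exactly the paper's: Lemma~\ref{auslander-bass-co-resolution-dimension} identifies $\sE_{l_1''}$ with the class of left $\C$\+comodules of $\sE_{l_1'}$\+coresolution dimension at most $l_1''-l_1'$ (and dually for the $\sF$'s), after which the paper simply invokes the general statement \cite[Proposition~A.5.6]{Pcosh} about a (co)resolving subcategory of uniformly bounded (co)resolution dimension. Where you differ is that you undertake to reprove that cited general statement from scratch, and this is where the problems are. Even for the conventional symbols, the equality $\Acycl(\sE_{l_1'})=\Acycl(\sE_{l_1''})\cap\Hot(\sE_{l_1'})$ supplied by your d\'evissage lemma does not by itself ``yield fully faithfulness'' of a functor between Verdier quotients; it has to be combined with the truncated-coresolution construction via a roof/factorization argument (replace the target of any quasi-isomorphism out of a complex over $\sE_{l_1'}$ by its finite coresolution, which again has terms in $\sE_{l_1'}$, and only then apply the d\'evissage to the resulting cone). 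That is repairable with the ingredients you already have.

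The genuine gap is at $\star=\abs+$, $\abs-$, $\abs$. Your d\'evissage is an argument about cocycles, and absolute acyclicity is not a condition on cocycles: the absolutely acyclic complexes form the thick subcategory \emph{generated} by totalizations of short exact sequences of complexes, so an arbitrary complex over $\sE_{l_1'}$ that is absolutely acyclic over $\sE_{l_1''}$ is not presented to you as such a totalization, and ``the same argument, applied to totalizations'' says nothing about it. Showing that such a complex is already absolutely acyclic over $\sE_{l_1'}$ is precisely the delicate half of \cite[Proposition~A.5.6]{Pcosh}; it is not obtained by identifying the two classes of $\star$\+acyclic complexes, but rather by applying the finite termwise coresolution to whole complexes, observing that the cone of the map from a complex to the totalization of its length\+$\le d$ coresolution is a finite iterated extension of totalizations of short exact sequences (hence absolutely acyclic over $\sE_{l_1'}$ when the input complex already has terms in $\sE_{l_1'}$), and comparing both composite functors with the identities. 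You correctly flag this as the most delicate point, but you assert it rather than prove it. Citing \cite[Proposition~A.5.6]{Pcosh}, as the paper does, or supplying this comparison-of-composites argument, is needed to close the absolute cases.
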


\begin{proof}
 Follows from Lemma~\ref{auslander-bass-co-resolution-dimension}
and~\cite[Proposition~A.5.6]{Pcosh}.
\end{proof}

 In particular, the unbounded derived category of the Bass class of
left $\C$\+comodules $\sD(\sE_{l_1})$ is the same for all $l_1\ge d_1$
and the unbounded derived category of the Auslander class of left
$\D$\+contramodules $\sD(\sF_{l_1})$ is the same for all $l_1\ge d_1$.
 We put
$$
 \sD'_{\L^\bu}(\C\comodl)=\sD(\sE_{l_1})
 \quad\text{and}\quad
 \sD''_{\L^\bu}(\D\contra)=\sD(\sF_{l_1}).
$$

 According to the discussion in Section~\ref{introd-t-structures}, it
follows from Lemmas~\ref{coalgebra-E-F-closed-kernels-cokernels}
and~\ref{coalgebra-contains-injectives-projectives} by virtue
of~\cite[Proposition~5.5]{PS2} that the pair of full subcategories
$\sD_\sA^{\le0}(\sE_{l_1})$ and $\sD^{\ge0}(\sE_{l_1})$ forms
a t\+structure of the derived type with the heart $\sA=\C\comodl$ on
the triangulated category $\sD'_{\L^\bu}(\C\comodl)$, while the pair
of full subcategories $\sD^{\le0}(\sF_{l_1})$ and
$\sD^{\ge0}_\sB(\sF_{l_1})$ forms a t\+structure of the derived type
with the heart $\sB=\D\contra$ on the triangulated category
$\sD''_{\L^\bu}(\D\contra)$.
 It is easy to see that these t\+structures do not depend on
the choice of a parameter $l_1\ge d_1$.

 Moreover, following the discussion in
Section~\ref{homotopy-adjusted-coresolving-secn},
the canonical triangulated functor $\sD'_{\L^\bu}(\C\comodl)\rarrow
\sD(\C\comodl)$ induced by the inclusion of exact/abelian categories
$\sE_{l_1}\rarrow\sA$ is a Verdier quotient functor that has
a right adjoint, and similarly,
the canonical triangulated functor $\sD''_{\L^\bu}(\D\contra)\rarrow
\sD(\D\contra)$ induced by the inclusion of exact/abelian categories
$\sF_{l_1}\rarrow\sB$ is a Verdier quotient functor that has
a left adjoint.

 The next theorem, generalizing
the equivalence~\eqref{bounded-derived-bass-auslander-equi}, provides,
in particular, a triangulated equivalence
$$
 \sD'_{\L^\bu}(\C\comodl)=\sD(\sE_{l_1})\simeq
 \sD(\sF_{l_1})=\sD''_{\L^\bu}(\D\contra).
$$
 Thus we obtain a pair of t\+structures of the derived type with
the hearts $\sA$ and $\sB$ on one and the same triangulated category.

\begin{thm} \label{coalgebra-lower-main-thm}
 For any symbol\/ $\star=\b$, $+$, $-$, $\varnothing$, $\abs+$,
$\abs-$, or~$\abs$, there is a triangulated equivalence\/
$\sD^\star(\sE_{l_1})\simeq\sD^\star(\sF_{l_1})$ provided by
(appropriately defined) mutually inverse derived functors\/
$\boR\Hom_\C(\L^\bu,{-})$ and\/ $\L^\bu\ocn_\D^\boL{-}$.
\end{thm}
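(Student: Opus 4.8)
The plan is to produce $\boR\Hom_\C(\L^\bu,{-})$ and $\L^\bu\ocn_\D^\boL{-}$ as triangulated functors between $\sD^\star(\sE_{l_1})$ and $\sD^\star(\sF_{l_1})$ for each symbol~$\star$, and then to check that they are mutually inverse. The bounded symbol $\star=\b$ is essentially already in hand: the equivalence~\eqref{bounded-derived-bass-auslander-equi}, together with Lemma~\ref{bass-auslander-taken-into-each-other}, shows that the two functors carry the classes $\sE_{l_1}$ and $\sF_{l_1}$ into bounded complexes over each other, and the compositions are the identity on objects by the defining adjunction isomorphisms of the Bass and Auslander classes. So the real content lies in extending the construction to the symbols $\star=+$, $-$, $\varnothing$, $\abs+$, $\abs-$, and $\abs$, where bounded representatives are no longer available.

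First I would construct the right derived functor $\boR\Hom_\C(\L^\bu,{-})\colon\sD^\star(\sE_{l_1})\rarrow\sD^\star(\sF_{l_1})$ by the adjusted-complex technique of~\cite[Appendix~A]{Pps}. The key observation is that the injective $\C$\+comodules lie in $\sE_{l_1}$ by Lemma~\ref{coalgebra-contains-injectives-projectives}(a) and are adjusted for $\Hom_\C(\L^\bu,{-})$, because $\boR\Hom_\C(\L^\bu,\J)=\Hom_\C(\L^\bu,\J)$ for injective~$\J$; dually, the projective $\D$\+contramodules lie in $\sF_{l_1}$ by Lemma~\ref{coalgebra-contains-injectives-projectives}(b) and are adjusted for $\L^\bu\ocn_\D{-}$. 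By Lemma~\ref{bass-auslander-taken-into-each-other}, the termwise images of an adjusted complex over $\sE_{l_1}$ under $\Hom_\C(\L^\bu,{-})$ lie in $\sF_{l_1}$, so after totalization the functor lands in $\sD^\star(\sF_{l_1})$ and not merely in $\sD^\star(\D\contra)$. This is the step that invokes the abstract formalism of Section~\ref{abstract-classes-secn}: one checks that the pair $(\sE_{l_1},\sF_{l_1})$ with these functors satisfies the conditions~(I\+-IV) there, which guarantees that every object of $\sD^\star(\sE_{l_1})$ is represented by an adjusted complex and that the resulting triangulated functor is well defined for each symbol~$\star$, including the absolute ones.

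Next I would verify that the two functors are quasi-inverse. They are adjoint by construction, and for every $\E\in\sE_{l_1}$ and $\F\in\sF_{l_1}$ the counit $\L^\bu\ocn_\D^\boL\boR\Hom_\C(\L^\bu,\E)\rarrow\E$ and the unit $\F\rarrow\boR\Hom_\C(\L^\bu,\>\L^\bu\ocn_\D^\boL\F)$ are isomorphisms, since these are exactly the adjunction conditions built into the definitions of the Bass and Auslander classes. As the unit and counit of the derived adjunction are computed, through the adjusted-complex construction, from these object-level isomorphisms, the composite endofunctors are isomorphic to the identity on all of $\sD^\star(\sE_{l_1})$ and $\sD^\star(\sF_{l_1})$, which yields the asserted equivalence.

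The main obstacle will be the construction in the unbounded and absolute cases. For $\star=\b$, $+$, $-$ one can truncate and exploit the bounded cohomological concentration supplied by Lemma~\ref{bass-auslander-taken-into-each-other}, but for $\star=\varnothing$ and especially for $\star=\abs$, $\abs+$, $\abs-$ one must produce adjusted resolutions of arbitrary, possibly two-sided infinite, complexes while controlling the subcategory in which the terms lie and ensuring that (absolutely) acyclic complexes are carried to (absolutely) acyclic complexes. This delicate bookkeeping is precisely what the machinery of~\cite[Appendix~A]{Pps} is designed to handle; the crux of the argument is therefore not any new computation, but the verification that the hypotheses of that machinery are met by $(\sE_{l_1},\sF_{l_1})$ and the two derived functors.
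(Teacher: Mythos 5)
Your proposal is correct and follows essentially the same route as the paper: the paper proves this theorem as a particular case of Theorem~\ref{coalgebra-generalized-main-thm}, whose hypotheses (the conditions~(I--IV) of Section~\ref{abstract-classes-secn}) are verified for the Bass and Auslander classes via Lemmas~\ref{coalgebra-E-F-closed-kernels-cokernels}, \ref{coalgebra-contains-injectives-projectives}, and~\ref{bass-auslander-taken-into-each-other}, and whose proof invokes exactly the adjusted-complex machinery of~\cite[Appendix~A]{Pps} with $\sJ=\C\comodl_\inj$, $\sP=\D\contra_\proj$, deducing mutual inverseness for all symbols~$\star$ from the bounded case~\eqref{bounded-derived-bass-auslander-equi} by~\cite[Theorem~A.7]{Pps}. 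Your reduction of the unbounded and absolute cases to that external machinery, and of mutual inverseness to the object-level adjunction isomorphisms built into the definitions of $\sE_{l_1}$ and $\sF_{l_1}$, matches the paper's argument.
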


\begin{proof}
 This is a particular case of
Theorem~\ref{coalgebra-generalized-main-thm} below.
\end{proof}

\Section{Abstract Corresponding Classes}
\label{abstract-classes-secn}

 More generally, suppose that $\sE\subset\C\comodl$ and $\sF\subset
\D\contra$ are two full subcategories satisfying the following
conditions (for some fixed integers $l_1$ and~$l_2$):
\begin{enumerate}
\renewcommand{\theenumi}{\Roman{enumi}}
\item the class of objects $\sE$ is closed under extensions and
the cokernels of injective morphisms in $\C\comodl$, and contains
all the injective left $\C$\+comodules;
\item the class of objects $\sF$ is closed under extensions and
the kernels of surjective morphisms in $\D\contra$, and contains all
the projective left $\D$\+contramodules;
\item for any $\C$\+comodule $\E\in\sE$, the derived category object
$\boR\Hom_\C(\L^\bu,\E)\in\sD^+(\D\contra)$ can be represented by
a complex of $\D$\+contramodules concentrated in the cohomological
degrees $-l_2\le m\le l_1$ with the terms belonging to~$\sF$;
\item for any $\D$\+contramodule $\F\in\sF$, the derived category object
$\L^\bu\ocn_\D^\boL\F\in\sD^-(\C\comodl)$ can be represented by
a complex of $\C$\+comodules concentrated in the cohomological degrees
$-l_1\le m\le l_2$ with the terms belonging to~$\sE$.
\end{enumerate}
 Just as in~\cite[Section~4]{Pps}, one can see from the conditions~(I)
and~(III), or~(II) and~(IV), that $l_1\ge d_1$ and $l_2\ge d_2$ if
$H^{-d_1}(\L^\bu)\ne0\ne H^{d_2}(\L^\bu)$.

 According to Lemmas~\ref{coalgebra-E-F-closed-kernels-cokernels},
\ref{coalgebra-contains-injectives-projectives},
and~\ref{bass-auslander-taken-into-each-other}, the Bass and Auslander
classes $\sE=\sE_{l_1}$ and $\sF=\sF_{l_1}$ satisfy
the conditions~(I\+-IV) with $l_2=d_2$.
 The following lemma provides a kind of converse implication.

\begin{lem} \label{adjunction-isomorphisms-follow}
\textup{(a)} For any\/ $\C$\+comodule\/ $\E\in\sE$, the adjunction
morphism\/ $\L^\bu\ocn_\D^\boL\boR\Hom_\C(\L^\bu,\E)\rarrow\E$ is
an isomorphism in\/ $\sD^\b(\C\comodl)$. \par
\textup{(b)} For any\/ $\D$\+contramodule\/ $\F\in\sF$, the adjunction
morphism\/ $\F\rarrow\boR\Hom_\C(\L^\bu,\>\allowbreak
\L^\bu\ocn_\D^\boL\F)$ is an isomorphism in\/ $\sD^\b(\D\contra)$.
\end{lem}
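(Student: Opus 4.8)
The plan is to prove part~(a) in detail, part~(b) being dual (replace injective coresolutions of comodules by projective resolutions of contramodules, cokernels of monomorphisms by kernels of epimorphisms, and interchange conditions~(I)/(II) and~(III)/(IV)). Write $\Phi=\L^\bu\ocn_\D^\boL{-}$ and $\Psi=\boR\Hom_\C(\L^\bu,{-})$ for the adjoint pair of derived functors, let $\eta\:\Phi\Psi\rarrow\Id$ be the counit, and for $\M^\bu\in\sD(\C\comodl)$ let $C(\M^\bu)$ denote the cone of $\eta_{\M^\bu}\:\Phi\Psi(\M^\bu)\rarrow\M^\bu$. Proving~(a) amounts to showing $C(\E)=0$ for every $\E\in\sE$. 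First I would record the base case: for a cofree (hence injective) left $\C$\+comodule $\C\ot_kV$ the counit is already an isomorphism, this being exactly the content of the proof of Lemma~\ref{coalgebra-contains-injectives-projectives}(a); thus $C(\C\ot_kV)=0$.

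Next comes the dévissage. Given $\E=\E^{(0)}\in\sE$, embed it into the cofree comodule $\J^{(0)}=\C\ot_k\E$ via its coaction map; since $\sE$ contains all injective comodules and is closed under cokernels of monomorphisms (condition~(I)), the cokernel $\E^{(1)}$ again lies in $\sE$. Iterating, I obtain an injective (indeed cofree) coresolution $0\rarrow\E\rarrow\J^{(0)}\rarrow\J^{(1)}\rarrow\dotsb$ all of whose cosyzygies $\E^{(n)}$ lie in $\sE$. Each short exact sequence $0\rarrow\E^{(n)}\rarrow\J^{(n)}\rarrow\E^{(n+1)}\rarrow0$ gives a distinguished triangle in $\sD(\C\comodl)$; applying $\eta$ term by term and invoking the nine lemma for triangulated categories (existence of a distinguished triangle on the cones of a morphism of distinguished triangles) yields a triangle $C(\E^{(n)})\rarrow C(\J^{(n)})\rarrow C(\E^{(n+1)})\rarrow C(\E^{(n)})[1]$. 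As $C(\J^{(n)})=0$ by the base case, this gives $C(\E^{(n)})\simeq C(\E^{(n+1)})[-1]$, whence $C(\E)\simeq C(\E^{(n)})[-n]$ for all $n\ge0$.

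Finally, the boundedness estimate, which is the crux. By condition~(III), $\Psi(\E^{(n)})=\boR\Hom_\C(\L^\bu,\E^{(n)})$ is represented by a complex $\fF^\bu_n$ with terms in $\sF$ concentrated in degrees $-l_2\le m\le l_1$; by condition~(IV), $\Phi$ carries each such term into $\sD(\C\comodl)$ with cohomology in degrees $-l_1\le m\le l_2$. Applying the triangulated functor $\Phi$ to the stupid filtration of the bounded complex $\fF^\bu_n$ exhibits $\Phi\Psi(\E^{(n)})=\L^\bu\ocn_\D^\boL\fF^\bu_n$ as built by finitely many triangles out of shifts of these bounded objects, so its cohomology is concentrated in degrees $-l_1-l_2\le m\le l_1+l_2$, \emph{uniformly} in~$n$. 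Since $\E^{(n)}$ sits in degree~$0$, the long exact sequence of the triangle $\Phi\Psi(\E^{(n)})\rarrow\E^{(n)}\rarrow C(\E^{(n)})\rarrow\Phi\Psi(\E^{(n)})[1]$ shows $H^j(C(\E^{(n)}))=0$ for all $j<-l_1-l_2-1$, uniformly in~$n$. Combined with $C(\E)\simeq C(\E^{(n)})[-n]$, this gives $H^i(C(\E))=H^{i-n}(C(\E^{(n)}))=0$ whenever $n>i+l_1+l_2+1$; letting $n\to\infty$ for each fixed~$i$ forces $C(\E)=0$. Both $\E$ and $\Phi\Psi(\E)$ being cohomologically bounded, the counit is an isomorphism in $\sD^\b(\C\comodl)$.

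The one genuinely delicate point is the uniform boundedness in the last paragraph: it is precisely what allows the shift $[-n]$ to push the cohomology of $C(\E)$ off to infinity, and it rests squarely on conditions~(III) and~(IV), which force $\boR\Hom_\C(\L^\bu,{-})$ and $\L^\bu\ocn_\D^\boL{-}$ to have finite cohomological amplitude on $\sE$ and $\sF$. Everything else—the cofree coresolution with cosyzygies trapped inside $\sE$ by condition~(I) and the triangle dévissage—is formal. For part~(b) I would run the dual argument: resolve $\F\in\sF$ by free contramodules $\Hom_k(\D,V)$, on which the unit $\F\rarrow\Psi\Phi(\F)$ is an isomorphism by the proof of Lemma~\ref{coalgebra-contains-injectives-projectives}(b), keep the syzygies inside $\sF$ using condition~(II), reduce the cone of the unit to a positive shift of its value on the $n$\+th syzygy, and bound uniformly using conditions~(IV) and~(III).
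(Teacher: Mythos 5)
Your argument is correct and is essentially the direct proof the paper points to: the paper's own (two-line) proof refers to the argument of~\cite[Lemma~4.1]{Pps}, which is exactly your d\'evissage --- base case on (co)free objects supplied by Lemma~\ref{coalgebra-contains-injectives-projectives}, cosyzygies kept inside $\sE$ by condition~(I), and the cone of the adjunction morphism killed by combining the shift $C(\E)\simeq C(\E^{(n)})[-n]$ with the uniform cohomological amplitude bound coming from conditions~(III\+-IV). You have simply written out in full what the paper delegates to the reference.
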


\begin{proof}
 This can be proved directly in the way similar to the proof
of~\cite[Lemma~4.1]{Pps}, or obtained as a byproduct of the proof of
Theorem~\ref{coalgebra-generalized-main-thm} below.
 (In any case, the argument is based on
Lemma~\ref{coalgebra-contains-injectives-projectives}.)
\end{proof}

 Assuming that $l_1\ge d_1$ and $l_2\ge d_2$, it is clear from
the conditions~(III\+-IV) and Lemma~\ref{adjunction-isomorphisms-follow}
that the inclusions $\sE\subset\sE_{l_1}$ and $\sF\subset\sF_{l_1}$
hold for any two classes of objects $\sE\subset\C\comodl$ and
$\sF\subset\D\contra$ satisfying~(I\+-IV).
 Furthermore, it follows from the conditions~(I\+-II) that
the triangulated functors $\sD^\b(\sE)\rarrow\sD^\b(\C\comodl)$ and
$\sD^\b(\sF)\rarrow\sD^\b(\D\contra)$ induced by the exact inclusions
$\sE\rarrow\C\comodl$ and $\sF\rarrow\D\contra$ are fully faithful.
 Therefore, the triangulated functors $\sD^\b(\sE)\rarrow
\sD^\b(\sE_{l_1})$ and $\sD^\b(\sF)\rarrow\sD^\b(\sF_{l_1})$ are
fully faithful, too.
 Using again the conditions~(III\+-IV), we can conclude that
the equivalence~\eqref{bounded-derived-bass-auslander-equi} restricts
to a triangulated equivalence
\begin{equation} \label{bounded-derived-abstract-equi}
 \sD^\b(\sE)\simeq\sD^\b(\sF).
\end{equation}

 The following theorem is our main result.
 
\begin{thm} \label{coalgebra-generalized-main-thm}
 Let\/ $\sE\subset\C\comodl$ and\/ $\sF\subset\D\contra$ be a pair of
full subcategories of comodules and contramodules satisfying
the conditions~(I\+-IV) for a pseudo-dualizing complex of\/
$\C$\+$\D$\+bicomodules~$\L^\bu$.
 Then for any conventional or absolute derived category symbol\/
$\star=\b$, $+$, $-$, $\varnothing$, $\abs+$, $\abs-$, or\/~$\abs$,
there is a triangulated equivalence $\sD^\star(\sE)\simeq\sD^\star(\sF)$
provided by (appropriately defined) mutually inverse derived functors\/
$\boR\Hom_\C(\L^\bu,{-})$ and\/ $\L^\bu\ocn_\D^\boL\nobreak{-}$.
\end{thm}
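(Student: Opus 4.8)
The plan is to build on the bounded equivalence~\eqref{bounded-derived-abstract-equi}, which already yields $\sD^\b(\sE)\simeq\sD^\b(\sF)$, and to extend it to every symbol~$\star$ by constructing the functors $\boR\Hom_\C(\L^\bu,{-})$ and $\L^\bu\ocn_\D^\boL{-}$ \emph{directly} on the derived categories $\sD^\star(\sE)$ and $\sD^\star(\sF)$ of the exact subcategories, rather than inheriting them from $\sD^\star(\C\comodl)$ and $\sD^\star(\D\contra)$. The technical engine is the construction of~\cite[Appendix~A]{Pps}, which I would transport to the present comodule-contramodule setting; its hypotheses hold here by Lemmas~\ref{coalgebra-contains-injectives-projectives} and~\ref{bass-auslander-taken-into-each-other} together with conditions~(I\+-IV).

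First I would upgrade conditions~(III\+-IV) to honest cochain-level functors. By condition~(III), each $\E\in\sE$ has $\boR\Hom_\C(\L^\bu,\E)$ represented by a complex over $\sF$ supported in the fixed finite range $-l_2\le m\le l_1$, and dually for condition~(IV). Following~\cite[Appendix~A]{Pps} I would choose these representatives functorially, obtaining exact functors from $\sE$ and from $\sF$ to the categories of complexes over $\sF$ and over $\sE$ supported in the strips $[-l_2,l_1]$ and $[-l_1,l_2]$. Given a complex $E^\bu$ over $\sE$, I would then apply the object-wise functor termwise and pass to the total complex of the resulting bicomplex. The crucial observation, which makes the \emph{unbounded} cases work, is that every column is supported in a strip of fixed width $l_1+l_2+1$: in each total degree~$n$ only the columns with $n-l_1\le i\le n+l_2$ contribute, so the total complex is a finite direct sum in each degree and the direct-sum and direct-product totalizations coincide. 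This produces candidate functors $\sD^\star(\sE)\rarrow\sD^\star(\sF)$ and $\sD^\star(\sF)\rarrow\sD^\star(\sE)$ simultaneously for all seven symbols.

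Next I would check descent to the derived categories. An admissible short exact sequence of objects of $\sE$ goes, under the object-wise functor, to a short exact sequence of strip-supported complexes over $\sF$, whose total complex is absolutely acyclic; hence the absolute cases $\abs$, $\abs+$, $\abs-$ follow at once, and the conventional cases follow by the standard hyper-derived-functor argument, the bounded strip width guaranteeing convergence. For mutual inverseness, the two functors are adjoint, inherited from the adjunction of $\L^\bu\ocn_\D^\boL{-}$ and $\boR\Hom_\C(\L^\bu,{-})$, and Lemma~\ref{adjunction-isomorphisms-follow} says the counit and unit are isomorphisms on the objects of $\sE$ and $\sF$ respectively. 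I would propagate this from objects to arbitrary complexes by noting that the cone of the counit on $E^\bu$ is the total complex of a bicomplex whose columns are the acyclic cones of the counit on the terms $E^i$; the bounded width forces this total complex to be acyclic, so the counit is invertible on all of $\sD^\star(\sE)$, and symmetrically for the unit on $\sD^\star(\sF)$.

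The main obstacle is the first step: turning the assignments $\E\mapsto\boR\Hom_\C(\L^\bu,\E)$ and $\F\mapsto\L^\bu\ocn_\D^\boL\F$ into strictly functorial cochain-level constructions valued in strip-supported complexes, so that totalization, the comparison with the ambient derived functors, and the unit/counit transformations are all defined \emph{before} passing to any homotopy or derived category. This bookkeeping is precisely what~\cite[Appendix~A]{Pps} accomplishes; the remaining work is to verify that its output is compatible with all seven derived-category truncations, which the uniform width bound coming from conditions~(III\+-IV) makes routine.
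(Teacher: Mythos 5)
Your proposal correctly identifies the ingredients the paper uses: the technology of \cite[Appendix~A]{Pps}, the uniform strip bound coming from conditions~(III\+-IV) as the mechanism that makes the unbounded and absolute cases work, the adjunction between the two derived functors, and Lemma~\ref{adjunction-isomorphisms-follow} (equivalently, the bounded equivalence~\eqref{bounded-derived-abstract-equi}) as the source of mutual inverseness. But the central device you build everything on --- a strictly functorial, exact, cochain-level assignment sending each $\E\in\sE$ to a strip-supported complex over $\sF$ representing $\boR\Hom_\C(\L^\bu,\E)$, and dually --- is not supplied by condition~(III), nor is it what \cite[Appendix~A]{Pps} provides. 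Condition~(III) only asserts that each individual object $\boR\Hom_\C(\L^\bu,\E)$ \emph{can be represented} by such a complex; the representative is determined only up to quasi-isomorphism, and there is no reason for a family of such choices to assemble into an additive exact functor $\sE\rarrow\sC^{[-l_2,l_1]}(\sF)$. Since the termwise-application-plus-totalization recipe, the descent argument, and the cone-of-the-counit computation all presuppose this functor, the proof as written does not get off the ground.

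What the paper actually does is sidestep the issue: the DG\+functors $\Psi=\Hom_\C(\L^\bu,{-})$ and $\Phi=\L^\bu\ocn_\D{-}$ are defined only on $\sC^+(\C\comodl_\inj)$ and $\sC^-(\D\contra_\proj)$, where they are honestly and strictly functorial with no choices involved. A complex over $\sE$ is first replaced by a quasi-isomorphic complex of injective comodules (and dually with projective contramodules), and the strip bound of conditions~(III\+-IV) enters as the statement that the resulting functors have finite homological dimension relative to the pair $(\sE,\sF)$; this is what allows the constructions of \cite[Sections~A.2\+-A.3]{Pps} to extend from $\star=+$ (resp.~$-$) to all seven symbols. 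Mutual inverseness is then not checked by a direct totalization of cones, but deduced formally from the already-established bounded equivalence~\eqref{bounded-derived-abstract-equi} via the first assertion of \cite[Theorem~A.7]{Pps} (alternatively, the second assertion of that theorem together with Lemma~\ref{coalgebra-contains-injectives-projectives} reproves the bounded case and yields Lemma~\ref{adjunction-isomorphisms-follow} as a byproduct). Your intuition about why the strip bound matters is right; the repair is to relocate the cochain-level functoriality to the subcategories of injectives and projectives, where it actually holds.
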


\begin{proof}
 The words ``appropriately defined'' here mean ``defined or constructed
using the technology of~\cite[Appendix~A]{Pps}''.
 In the context of the latter, we set
\begin{align*}
 \sA=\C\comodl\,&\supset\,\sE\,\supset\,\sJ=\C\comodl_\inj \\
 \sB=\D\contra\,&\supset\,\sF\,\supset\,\sP=\D\contra_\proj.
\end{align*}
 Consider the adjoint pair of DG\+functors
\begin{alignat*}{2}
 \Psi=\Hom_\C(\L^\bu,{-})\:&\sC^+(\sJ)&&\lrarrow\sC^+(\sB) \\
 \Phi=\L^\bu\ocn_\D{-}\:&\sC^-(\sP)&&\lrarrow\sC^-(\sA).
\end{alignat*}
 Then the constructions of~\cite[Sections~A.2\+-A.3]{Pps} provide
the derived functors $\boR\Psi\:\sD^\star(\sE)\rarrow\sD^\star(\sF)$
and $\boL\Phi\:\sD^\star(\sF)\rarrow\sD^\star(\sE)$.
 The arguments in~\cite[Section~A.4]{Pps} show that the functor
$\boL\Phi$ is left adjoint to the functor $\boR\Psi$, and the first
assertion of~\cite[Theorem~A.7]{Pps} allows to deduce the claim
that they are mutually inverse triangulated equivalences from
the particular case of $\star=\b$, which is the triangulated
equivalence~\eqref{bounded-derived-abstract-equi}.

 Alternatively, applying the second assertion
of~\cite[Theorem~A.7]{Pps} together with
Lemma~\ref{coalgebra-contains-injectives-projectives} allows to
reprove the triangulated
equivalence~\eqref{bounded-derived-abstract-equi} instead of using it,
thus obtaining a proof of Lemma~\ref{adjunction-isomorphisms-follow}.
 We refer to~\cite[proof of Theorem~4.2]{Pps} for the more informal
discussion and further details.
\end{proof}

 According to the discussion in Section~\ref{introd-t-structures}, it
follows from the conditions~(I\+-II) by virtue
of~\cite[Proposition~5.5]{PS2} that the pair of full subcategories
$\sD_\sA^{\le0}(\sE)$ and $\sD^{\ge0}(\sE)$ forms
a t\+structure of the derived type with the heart $\sA=\C\comodl$ on
the triangulated category $\sD(\sE)$, while the pair of
full subcategories $\sD^{\le0}(\sF)$ and $\sD^{\ge0}_\sB(\sF)$
forms a t\+structure of the derived type with the heart $\sB=\D\contra$
on the triangulated category $\sD(\sF)$.
 By Theorem~\ref{coalgebra-generalized-main-thm}, there is
a triangulated equivalence
\begin{equation} \label{unbounded-derived-abstract-equi}
 \sD(\sE)\simeq\sD(\sF).
\end{equation}
 Thus, as in Section~\ref{auslander-bass-secn}, we have a pair of
t\+structures of the derived type with the hearts $\C\comodl$ and
$\D\contra$ on one and the same triangulated
category~\eqref{unbounded-derived-abstract-equi}.

 Moreover, following the discussion in
Section~\ref{homotopy-adjusted-coresolving-secn},
the triangulated functor $\sD(\sE)\rarrow\sD(\C\comodl)$ induced by
the inclusion of exact/abelian categories $\sE\rarrow\C\comodl$ is
a Verdier quotient functor with a right adjoint, while
the triangulated functor $\sD(\sF)\rarrow\sD(\D\contra)$ induced by
the inclusion of exact/abelian categories $\sF\rarrow\D\contra$ is
a Verdier quotient functor with a left adjoint.

 Now suppose that we have two pairs of full subcategories
$\sE_\prime\subset\sE'\subset\C\comodl$ and $\sF_{\prime\prime}
\subset\sF''\subset\D\contra$ such that both the pairs
$(\sE_\prime,\sF_{\prime\prime})$ and $(\sE',\sF'')$ satisfy
the conditions~(I\+-IV).
 Then for any symbol $\star=\b$, $+$, $-$, $\varnothing$, $\abs+$,
$\abs-$, or~$\abs$ there is a commutative diagram of triangulated
functors and triangulated equivalences
\begin{equation} \label{abstract-star-triangulated-diagram}
\begin{tikzcd}
\sD^\star(\sE_\prime)\arrow[rr, Leftrightarrow, no head, no tail]
\arrow[dd] && \sD^\star(\sF_{\prime\prime}) \arrow[dd] \\ \\
\sD^\star(\sE') \arrow[rr, Leftrightarrow, no head, no tail]
&& \sD^\star(\sF'')
\end{tikzcd}
\end{equation}
 The vertical functors are induced by the exact inclusions of
exact categories $\sE_\prime\rarrow\sE'$ and
$\sF_{\prime\prime}\rarrow\sF''$.
 When $\star=\varnothing$, the vertical functors on
the diagram~\eqref{abstract-star-triangulated-diagram} are t\+exact
with respect to both the respective t\+structures and induce
the identity equivalences $\sA\overset=\rarrow\sA$ and
$\sB\overset=\rarrow\sB$ on the hearts.

\begin{rem} \label{pairs-of-t-structures-nontriviality-remark}
 Can one make a \emph{concept} out of the above construction of a pair
of t\+structures (and the similar construction of the pair of
$\infty$\+(co)tilting t\+structures in~\cite[Section~5]{PS2})?
 Let $\sA$ and $\sB$ be abelian categories.
 One can define a \emph{t\+derived pseudo-equivalence} between $\sA$
and $\sB$ as a triangulated category $\sD$ endowed with a pair of
(possibly degenerate) t\+structures of the derived type
$({}'\sD^{\le0},{}'\sD^{\ge0})$
and $({}''\sD^{\le0},{}''\sD^{\ge0})$ whose hearts are
${}'\sD^{\le0}\cap{}'\sD^{\ge0}\simeq\sA$ and
${}''\sD^{\le0}\cap{}''\sD^{\ge0}\simeq\sB$.

 With such a definition, though, any pair of abelian categories is
connected by a trivial t\+derived pseudo-equivalence.
 Set $\sD$ to be the Cartesian product $\sD(\sA)\times\sD(\sB)$.
 Then the pair of full subcategories ${}'\sD^{\le0}=
\sD^{\le0}(\sA)\times\sD(\sB)$ and ${}'\sD^{\ge0}=\sD^{\ge0}(\sA)
\times0\subset\sD$ is a degenerate t\+structure of the derived
type on $\sD$ with the heart $\sA$, while the pair of full
subcategories ${}''\sD^{\le0}=0\times\sD^{\le0}(\sB)$ and
${}''\sD^{\ge0}=\sD(\sA)\times\sD^{\ge0}(\sB)\subset\sD$ is
a degenerate t\+structure of the derived type on $\sD$ with
the heart~$\sB$.

 Perhaps imposing an additional nontriviality condition would make
the notion of a t\+derived pseudo-equivalence more meaningful.
 An obvious idea is to demand existence of a pair of integers~$l_1$
and~$l_2$ such that for any object $X\in\sD$ there exist
distinguished triangles $Y'\rarrow X\rarrow Z''\rarrow Y'[1]$
and $Y''\rarrow X\rarrow Z'\rarrow Y''[1]$ with
$Y'\in{}'\sD^{\ge0}$, \,$Z''\in{}''\sD^{\le l_1-1}$,
\,$Y''\in{}''\sD^{\ge0}$, and $Z'\in{}'\sD^{\le l_2-1}$.
 This condition holds for the pair of t\+structures
$({}'\sD^{\le0},{}'\sD^{\ge0})=(\sD_\sA^{\le0}(\sE),\sD^{\ge0}(\sE))$
and $({}''\sD^{\le0},{}''\sD^{\ge0})=
(\sD^{\le0}(\sF),\sD^{\ge0}_\sB(\sF))$ constructed in this section,
and does \emph{not} hold for the trivial pair of t\+structures
from the previous paragraph.
\end{rem}

\Section{Minimal Corresponding Classes}
\label{minimal-classes-secn}

 Let $\C$ and $\D$ be coassociative coalgebras over a field~$k$, and
let $\L^\bu$ be a pseudo-dualizing complex of $\C$\+$\D$\+bicomodules
concentrated in the cohomological degrees $-d_1\le m\le d_2$.

\begin{prop} \label{minimal-corresponding-classes-prop}
 Fix $l_1=d_1$ and $l_2\ge d_2$.
 Then there exists a unique minimal pair of full subcategories\/
$\sE^{l_2}=\sE^{l_2}(\L^\bu)\subset\C\comodl$ and\/
$\sF^{l_2}=\sF^{l_2}(\L^\bu)\subset\D\contra$ satisfying
the conditions~(I\+-IV).
 For any pair of full subcategories\/ $\sE\subset\C\comodl$ and\/
$\sF\subset\D\contra$ satisfying the conditions~(I\+-IV) one has\/
$\sE^{l_2}\subset\sE$ and\/ $\sF^{l_2}\subset\sF$.
\end{prop}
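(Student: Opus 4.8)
The plan is to exhibit the minimal pair $(\sE^{l_2},\sF^{l_2})$ by a monotone transfinite iteration starting from the injective comodules and the projective contramodules, to verify that its stable value satisfies the conditions~(I\+-IV), and then to show by a parallel induction that it is contained in every pair satisfying~(I\+-IV); since a minimal pair is contained in every competitor, its uniqueness is then automatic, and this is what the word ``unique'' in the statement refers to. Throughout I would follow the exposition of \cite[Section~5]{Pps}.

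For the construction I would define increasing chains $\sE^{(0)}\subseteq\sE^{(1)}\subseteq\dotsb$ in $\C\comodl$ and $\sF^{(0)}\subseteq\sF^{(1)}\subseteq\dotsb$ in $\D\contra$, interleaving the two derived functors with the closure operations. Put $\sE^{(0)}=\C\comodl_\inj$ and $\sF^{(0)}=\D\contra_\proj$. Given $\sE^{(\alpha)}$ and $\sF^{(\alpha)}$, let $\sF^{(\alpha+1)}$ be the closure under extensions and kernels of surjective morphisms of the class generated by $\sF^{(\alpha)}$ together with the terms of a chosen complex representing $\boR\Hom_\C(\L^\bu,\E)$ in the degrees $-l_2\le m\le l_1$, for each $\E\in\sE^{(\alpha)}$; symmetrically, let $\sE^{(\alpha+1)}$ be the closure under extensions and cokernels of injective morphisms of the class generated by $\sE^{(\alpha)}$ together with the terms of a chosen complex representing $\L^\bu\ocn_\D^\boL\F$ in the degrees $-l_1\le m\le l_2$, for each $\F\in\sF^{(\alpha)}$; at limit ordinals I would take unions. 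The representatives in the prescribed degree windows are furnished by Lemma~\ref{bass-auslander-taken-into-each-other}: for $\E\in\sE_{l_1}$ the object $\boR\Hom_\C(\L^\bu,\E)$ is represented in degrees $-d_2\le m\le l_1$ (a window contained in $[-l_2,l_1]$ since $d_2\le l_2$) with terms in $\sF_{l_1}$, and dually for $\L^\bu\ocn_\D^\boL\F$. Together with Lemmas~\ref{coalgebra-E-F-closed-kernels-cokernels} and~\ref{coalgebra-contains-injectives-projectives}, this keeps the whole iteration inside the Bass and Auslander classes, $\sE^{(\alpha)}\subseteq\sE_{l_1}$ and $\sF^{(\alpha)}\subseteq\sF_{l_1}$; being monotone and bounded above by $(\sE_{l_1},\sF_{l_1})$, the chains stabilize at some ordinal, and I would set $\sE^{l_2}$ and $\sF^{l_2}$ equal to their stable unions.

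Next I would verify~(I\+-IV). Conditions~(I) and~(II) hold because the stable classes are closed under the relevant operations and contain the injectives and projectives. For~(III\+-IV)---the step I expect to be the principal obstacle---I would use that the iteration has reached a fixed point: given $\E\in\sE^{l_2}$, the comodule already lies in $\sE^{(\alpha)}$ for some~$\alpha$, so a representative of $\boR\Hom_\C(\L^\bu,\E)$ in degrees $[-l_2,l_1]$ had its terms placed into $\sF^{(\alpha+1)}\subseteq\sF^{l_2}$, and dually. The delicate points are the cohomological degree bookkeeping---the upper bound $l_1$ coming from the vanishing $\Ext_\C^n(\L^\bu,\E)=0$ for $n>l_1$, valid throughout because $\sE^{l_2}\subseteq\sE_{l_1}$, and the lower bound, coming from the fact that $\boR\Hom_\C(\L^\bu,{-})$ of any comodule has cohomology in degrees $\ge -d_2\ge -l_2$ by the boundedness of $\L^\bu$ in the degrees $[-d_1,d_2]$---and the requirement that membership of the terms in the partner class survive the passage to cokernels of monomorphisms (respectively kernels of epimorphisms). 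The latter I would propagate through the long exact sequences attached to a short exact sequence $0\rarrow\E'\rarrow\E\rarrow\E''\rarrow0$, using the closure of $\sF$ (respectively $\sE$) under extensions. Where the adjunction isomorphisms $\L^\bu\ocn_\D^\boL\boR\Hom_\C(\L^\bu,\E)\rarrow\E$ and $\F\rarrow\boR\Hom_\C(\L^\bu,\>\L^\bu\ocn_\D^\boL\F)$ are needed to close the loop between the two classes, I would invoke Lemma~\ref{adjunction-isomorphisms-follow}.

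Finally, for minimality let $(\sE,\sF)$ be any pair satisfying~(I\+-IV). I would prove $\sE^{(\alpha)}\subseteq\sE$ and $\sF^{(\alpha)}\subseteq\sF$ by transfinite induction: the base case is~(I\+-II), and in the successor step, for $\E\in\sE^{(\alpha)}\subseteq\sE$ condition~(III) guarantees that $\boR\Hom_\C(\L^\bu,\E)$ is representable by a complex in degrees $[-l_2,l_1]$ with terms in $\sF$, so that the generators added to $\sF^{(\alpha+1)}$ lie in $\sF$ after the admissible operations, and dually for $\sE^{(\alpha+1)}$. The one genuinely subtle point is that~(III\+-IV) assert only the \emph{existence} of a degree-bounded representative, so matching the representative chosen in the construction against an arbitrary pair $(\sE,\sF)$ requires the coresolution/resolution dimension bookkeeping of Lemmas~\ref{auslander-bass-co-resolution-dimension} and~\ref{bass-auslander-complexes}, exactly as in \cite[Section~5]{Pps}. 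Passing to the union over~$\alpha$ then yields $\sE^{l_2}\subseteq\sE$ and $\sF^{l_2}\subseteq\sF$, and the uniqueness of the minimal pair follows.
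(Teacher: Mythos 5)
Your proposal follows essentially the same route as the paper, which constructs $\sE^{l_2}$ and $\sF^{l_2}$ by the simultaneous generation process of \cite[proof of Proposition~5.1]{Pps}: start from $\C\comodl_\inj$ and $\D\contra_\proj$, alternately adjoin the terms of representatives of $\boR\Hom_\C(\L^\bu,{-})$ and $\L^\bu\ocn_\D^\boL{-}$ in the prescribed degree windows (supplied by Lemma~\ref{bass-auslander-taken-into-each-other}, so that the whole iteration stays inside the Bass and Auslander classes), close under extensions and cokernels of monomorphisms (resp.\ kernels of epimorphisms), verify (I\+-IV) at the fixed point, and prove minimality by induction along the construction.

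The one step that does not hold up as written is your justification of stabilization. An ordinal-indexed increasing chain of full subcategories that is ``monotone and bounded above by $(\sE_{l_1},\sF_{l_1})$'' need not stabilize: the Bass and Auslander classes are proper classes (they contain all the injective comodules $\C\ot_kV$, resp.\ all the projective contramodules $\Hom_k(\D,V)$, for arbitrary vector spaces $V$), and a strictly increasing ordinal-indexed chain of subclasses of a proper class is perfectly possible. What actually closes this gap --- and it is the point the paper makes explicitly, in contrast with~\cite{Pps} --- is that, since no closure under infinite direct sums or products is performed here, every generating operation (adjoining the terms of a representative of a single object, forming an extension or a (co)kernel involving finitely many objects) is finitary; hence the union over the poset of nonnegative integers is already a fixed point, and no transfinite iteration is needed at all. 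With that substitution your argument goes through; the remaining subtlety you correctly flag --- matching the representative chosen in the construction against an arbitrary pair satisfying (III\+-IV) --- is deferred to Lemmas~\ref{bass-auslander-complexes} and~\ref{auslander-bass-co-resolution-dimension} and to \cite[Section~5]{Pps}, which is the same level of detail as the paper's own proof.
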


\begin{proof}
 The full subcategories $\sE^{l_2}\subset\C\comodl$ and $\sF^{l_2}
\subset\D\contra$ are constructed simultaneously by a generation
process similar to the one in~\cite[proof of Proposition~5.1]{Pps}.
 The difference is that, unlike in~\cite{Pps}, we do not require
the classes $\sE^{l_2}$ and $\sF^{l_2}$ to be closed under infinite
direct sums and products, and accordingly do not do the direct
sum/product closure in their construction.
 (Indeed, the direct sum/product closure would be problematic,
as we do not know whether the Bass and Auslander classes $\sE_{l_1}$
and $\sF_{l_1}$ are closed under direct sums/products; see
Remark~\ref{coalgebra-closed-sums-products-remark}.)
 Accordingly, it suffices to repeat the iterative process over the poset
of nonnegative integers and no transfinite iterations are needed.
\end{proof}

\begin{rem}
 Moreover, for any two integers $l_1\ge d_1$ and $l_2\ge d_2$ and any
two full subcategories $\sE\subset\C\comodl$ and $\sF\subset\D\contra$
satisfying the conditions~(I\+-IV) with the parameters~$l_1$ and~$l_2$,
one has $\sE^{l_2}\subset\sE$ and $\sF^{l_2}\subset\sF$.
 This is easily provable by induction with respect to the iterative
construction of the categories $\sE^{l_2}$ and $\sF^{l_2}$ in
Proposition~\ref{minimal-corresponding-classes-prop}
(cf.~\cite[Remark~5.2]{Pps}).
\end{rem}

 One observes that the conditions~(III\+-IV) become weaker as
the parameter~$l_2$ increases.
 Therefore, $\sE^{l_2}\supset\sE^{l_2+1}$ and $\sF^{l_2}\supset
\sF^{l_2+1}$ for all $l_2\ge d_2$.

\begin{lem} \label{abstract-classes-co-resolution-dimension}
 Let $n\ge0$ and $l_1\ge d_1$, \,$l_2\ge d_2+n$ be some integers, and
let\/ $\sE\subset\C\comodl$ and\/ $\sF\subset\D\contra$ be a pair of
full subcategories satisfying the conditions~(I\+-IV) with
the parameters~$l_1$ and~$l_2$.
 Denote by\/ $\sE(n)\subset\C\comodl$ the full subcategory of all left\/
$\C$\+comodules of\/ $\sE$\+coresolution dimension~$\le n$ and by\/
$\sF(n)\subset\D\contra$ the full subcategory of all left\/
$\D$\+contramodules of\/ $\sF$\+resolution dimension~$\le n$.
 Then the pair of classes of comodules and contramodules\/ $\sE(n)$
and\/ $\sF(n)$ satisfies the conditions~(I\+-IV) with the parameters
$l_1+n$ and $l_2-n$.
\end{lem}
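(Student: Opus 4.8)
The plan is to check the four defining conditions of Section~\ref{abstract-classes-secn} for the pair $\bigl(\sE(n),\sF(n)\bigr)$ with the shifted parameters $l_1+n$ and $l_2-n$, treating conditions (I)--(II) as formal and concentrating the real work on (III)--(IV). For (I) and (II) I would simply invoke the standard theory of (co)resolution dimension relative to a (co)resolving subcategory, as in \cite[Section~2]{St} or \cite[Section~A.5]{Pcosh}: since $\sE$ is coresolving and contains the injectives, the class $\sE(n)$ of left $\C$\+comodules of $\sE$\+coresolution dimension $\le n$ is again closed under extensions and cokernels of injective morphisms and still contains the injectives, and dually $\sF(n)$ is resolving and contains the projectives. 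The one elementary observation I would isolate is that if $0\rarrow X\rarrow Y\rarrow Z\rarrow0$ is exact with $Y\in\sE$ and $Z$ of $\sE$\+coresolution dimension $\le m$, then concatenating a coresolution of $Z$ with $Y$ shows $X$ has $\sE$\+coresolution dimension $\le m+1$ (and dually for $\sF$); this splicing is the engine behind the hard bounds.

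For condition~(IV), fix $\F\in\sF(n)$ and a resolution $0\rarrow\F_n\rarrow\dotsb\rarrow\F_0\rarrow\F\rarrow0$ with $\F_i\in\sF$, so that $\F$ is quasi-isomorphic to $[\F_n\rarrow\dotsb\rarrow\F_0]$ in cohomological degrees $-n\le m\le0$. Applying $\L^\bu\ocn_\D^\boL{-}$ and replacing each $\L^\bu\ocn_\D^\boL\F_i$ by a representative with terms in $\sE$ in degrees $-l_1\le m\le l_2$ (condition~(IV) for $\sF$), totalization yields a representative $T^\bu$ of $\L^\bu\ocn_\D^\boL\F$ with terms in $\sE$ (using closure of $\sE$ under finite direct sums) in degrees $-(l_1+n)\le m\le l_2$. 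This gives only the naive upper bound $l_2$; the point is to push it down to $l_2-n$. The crucial extra input is that the \emph{cohomology} of $\L^\bu\ocn_\D^\boL\F$ is concentrated in degrees $\le d_2$, simply because $\L^\bu$ lives in degrees $\le d_2$ and any projective resolution of $\F$ lives in nonpositive degrees; since the hypothesis $l_2\ge d_2+n$ gives $l_2-n\ge d_2$, the canonical truncation $\tau_{\le l_2-n}T^\bu$ is quasi-isomorphic to $T^\bu$. Its terms below degree $l_2-n$ lie in $\sE$, while its top term $Z=\ker(T^{l_2-n}\rarrow T^{l_2-n+1})$ fits, by the vanishing of cohomology above $l_2-n$, into the exact sequence
$$0\rarrow Z\rarrow T^{l_2-n}\rarrow T^{l_2-n+1}\rarrow\dotsb\rarrow T^{l_2}\rarrow0,$$
which is itself an $\sE$\+coresolution of $Z$ of length~$n$; hence $Z\in\sE(n)$. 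This produces the representative required by condition~(IV) with parameters $l_1+n$ and $l_2-n$.

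Condition~(III) is entirely dual: one coresolves $\E\in\sE(n)$, totalizes $\boR\Hom_\C(\L^\bu,{-})$ using condition~(III) for $\sE$ to obtain a representative $U^\bu$ with terms in $\sF$ in degrees $-l_2\le m\le l_1+n$, observes that the cohomology of $\boR\Hom_\C(\L^\bu,\E)$ is concentrated in degrees $\ge-d_2\ge-(l_2-n)$, and applies the truncation $\tau^{\ge-(l_2-n)}U^\bu$; the resulting bottom term is the cokernel of $U^{-(l_2-n)-1}\rarrow U^{-(l_2-n)}$, and the dual exact sequence exhibits it as an $\sF$\+resolution of length~$n$, so it lies in $\sF(n)$. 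The main obstacle is precisely these far-end bounds (the $l_2-n$ in (IV) and the $-(l_2-n)$ in (III)): termwise totalization only gives $l_2$ and $-l_2$, and the natural temptation to improve them by a long-exact-sequence induction on $n$ fails, because the offending top (resp.\ bottom) cohomology of the single-step objects need not vanish. The resolution is to keep the two roles of $l_2$ distinct---it bounds the \emph{terms} of the representative, not the cohomology---and to play the intrinsic cohomological bound $d_2$ of $\L^\bu$ against the slack $l_2\ge d_2+n$, so that truncation removes the excess degrees while the truncated boundary object automatically acquires a length\+$n$ (co)resolution. This parallels the argument for the analogous module-theoretic statement in \cite[Section~5]{Pps}, to which one may alternatively refer.
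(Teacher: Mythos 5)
Your overall strategy is the one the paper intends (the paper's own proof is just a deferral to~\cite[Lemma~5.3]{Pps}), and the step you correctly single out as the crux is carried out correctly: truncating at degree $l_2-n$ using the intrinsic cohomological bound $d_2$ on $\L^\bu\ocn_\D^\boL\F$ together with the slack $l_2\ge d_2+n$, and then reading off from the exact tail $0\rarrow Z\rarrow T^{l_2-n}\rarrow\dotsb\rarrow T^{l_2}\rarrow0$ a length-$n$ $\sE$\+coresolution of the kernel term, so that $Z\in\sE(n)$. The dual truncation for condition~(III) and the reduction of (I)--(II) to the standard closure properties of finite (co)resolution dimension are likewise fine.

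The gap is in the step that produces $T^\bu$ in the first place. Condition~(IV) gives, for each $i$ \emph{separately}, an isomorphism of $\L^\bu\ocn_\D^\boL\F_i$ with some complex $E_i^\bu$ having terms in $\sE$ in degrees $[-l_1,l_2]$, but only in the derived category; these choices carry no compatibility with the differentials of the resolution, so there is no bicomplex to totalize --- the morphisms $\L^\bu\ocn_\D^\boL\F_i\rarrow\L^\bu\ocn_\D^\boL\F_{i-1}$ need not lift to chain maps $E_i^\bu\rarrow E_{i-1}^\bu$, let alone to ones whose consecutive composites vanish. The assertion you need is true, but it requires an argument, e.g.\ induction on~$n$ over the short exact sequences $0\rarrow\F'\rarrow\F_0\rarrow\F\rarrow0$ with $\F_0\in\sF$ and $\F'\in\sF(n-1)$: represent $\L^\bu\ocn_\D^\boL\F_0$ by $\tau_{\le l_2}\J^\bu$ for a bounded below complex of injectives $\J^\bu$ in degrees $\ge-l_1$ (the acyclic cone of the comparison quasi-isomorphism from the given $\sE$\+representative, read as an exact sequence and combined with closure of $\sE$ under cokernels of injective morphisms, shows that the top term $\ker(\J^{l_2}\rarrow\J^{l_2+1})$ lies in $\sE$, so this truncation again has terms in $\sE$ in degrees $[-l_1,l_2]$); a derived-category morphism from the bounded complex representing $\L^\bu\ocn_\D^\boL\F'$ into such a truncation is realized by an honest chain map, and its cone has terms in $\sE$ in degrees $[-(l_1+n),l_2]$, after which your truncation at $l_2-n$ applies verbatim. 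Alternatively one can invoke the derived-functor machinery of~\cite[Appendix~A]{Pps}, which is what the paper's citation implicitly relies on; either way, the one-sentence ``totalization yields $T^\bu$'' does not stand on its own.
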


\begin{proof}
 Similar to~\cite[Lemma~5.3]{Pps}.
\end{proof}

\begin{prop}
 For any $l_2''\ge l_2'\ge d_2$ and any conventional or absolute derived
category symbol\/ $\star=\b$, $+$, $-$, $\varnothing$, $\abs+$, $\abs-$,
or\/~$\abs$, the exact inclusions\/ $\sE^{l_2''}\rarrow\sE^{l_2'}$ and\/
$\sF^{l_2''}\rarrow\sF^{l_2'}$ induce triangulated equivalences
$$
 \sD^\star(\sE^{l_2''})\simeq\sD^\star(\sE^{l_2'})
 \quad\text{and}\quad
 \sD^\star(\sF^{l_2''})\simeq\sD^\star(\sE^{l_2'}).
$$
\end{prop}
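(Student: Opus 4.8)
The plan is to imitate the proof of Proposition~\ref{maximal-classes-essentially-the-same}, with Lemma~\ref{auslander-bass-co-resolution-dimension} replaced by Lemma~\ref{abstract-classes-co-resolution-dimension} together with the minimality property of the classes $\sE^{l_2}$ and $\sF^{l_2}$, and with the conclusion again drawn from~\cite[Proposition~A.5.6]{Pcosh}. Since the subcategories indexed by an upper parameter form decreasing chains, we have $\sE^{l_2''}\subset\sE^{l_2'}$ and $\sF^{l_2''}\subset\sF^{l_2'}$; it therefore suffices to show that $\sE^{l_2''}$ is a coresolving subcategory of the exact category $\sE^{l_2'}$ through which every object admits a finite coresolution, and dually that $\sF^{l_2''}$ is resolving in $\sF^{l_2'}$ with every object of finite resolution dimension.

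First I would set $n=l_2''-l_2'\ge0$ and apply Lemma~\ref{abstract-classes-co-resolution-dimension} to the pair $\sE^{l_2''}$, $\sF^{l_2''}$, which satisfies the conditions~(I\+-IV) with the parameters $l_1=d_1$ and $l_2=l_2''$ by Proposition~\ref{minimal-corresponding-classes-prop}. The hypothesis $l_2\ge d_2+n$ of that lemma reads $l_2''\ge d_2+n$, that is $l_2'\ge d_2$, which holds by assumption. The lemma then yields that the pair $\sE^{l_2''}(n)$, $\sF^{l_2''}(n)$ formed by the comodules of $\sE^{l_2''}$\+coresolution dimension~$\le n$ and the contramodules of $\sF^{l_2''}$\+resolution dimension~$\le n$ again satisfies the conditions~(I\+-IV), now with the parameters $l_1=d_1+n$ and $l_2=l_2'$.

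Next I would invoke the minimality property in the Remark following Proposition~\ref{minimal-corresponding-classes-prop}: since $\sE^{l_2''}(n)$, $\sF^{l_2''}(n)$ satisfy~(I\+-IV) with parameters $d_1+n\ge d_1$ and $l_2'\ge d_2$, the minimal corresponding classes for the upper parameter $l_2'$ are contained in them, i.e.\ $\sE^{l_2'}\subset\sE^{l_2''}(n)$ and $\sF^{l_2'}\subset\sF^{l_2''}(n)$. In other words every object of $\sE^{l_2'}$ has $\sE^{l_2''}$\+coresolution dimension at most $n=l_2''-l_2'$, and dually on the contramodule side. Because $\sE^{l_2'}$ is closed under the cokernels of injective morphisms and contains $\sE^{l_2''}$ (condition~(I)), the successive cosyzygies of such a coresolution again lie in $\sE^{l_2'}$, so the coresolution is genuinely one within the exact category $\sE^{l_2'}$; dually for $\sF^{l_2'}$. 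Applying~\cite[Proposition~A.5.6]{Pcosh} to these two situations then produces the asserted triangulated equivalences $\sD^\star(\sE^{l_2''})\simeq\sD^\star(\sE^{l_2'})$ and $\sD^\star(\sF^{l_2''})\simeq\sD^\star(\sF^{l_2'})$ for every symbol~$\star$.

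The step I expect to require the most care is the bookkeeping of the two parameters under Lemma~\ref{abstract-classes-co-resolution-dimension}: one must track that raising the coresolution-dimension bound by $n$ shifts $l_1$ up by $n$ while lowering $l_2$ by $n$, and then verify that the shifted class still falls within the scope of the minimality statement, which demands precisely $l_1\ge d_1$ and $l_2\ge d_2$. One should also confirm that the finite coresolution dimension, a priori measured in the ambient category $\C\comodl$, is realized inside the exact subcategory $\sE^{l_2'}$ itself — which is exactly what the closure property in condition~(I) guarantees.
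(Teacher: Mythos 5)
Your proposal is correct and follows essentially the same route as the paper: the paper's proof likewise uses Lemma~\ref{abstract-classes-co-resolution-dimension} to bound the $\sE^{l_2''}$\+coresolution (resp.\ $\sF^{l_2''}$\+resolution) dimension of objects of $\sE^{l_2'}$ (resp.\ $\sF^{l_2'}$) by $l_2''-l_2'$ and then invokes~\cite[Proposition~A.5.6]{Pcosh}, referring to~\cite[Proposition~5.4]{Pps} for the details you have spelled out. Your explicit parameter bookkeeping (applying the lemma with $l_1=d_1$, $l_2=l_2''$, $n=l_2''-l_2'$ and then the minimality remark with parameters $d_1+n$ and~$l_2'$) is exactly the intended argument, and you correctly record the target equivalence as $\sD^\star(\sF^{l_2''})\simeq\sD^\star(\sF^{l_2'})$, fixing an evident typo in the statement.
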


\begin{proof}
 Similar to~\cite[Proposition~5.4]{Pps}.
 Using Lemma~\ref{abstract-classes-co-resolution-dimension}, one checks
that the $\sE^{l_2''}$\+coresolution dimension of any object of
$\sE^{l_2'}$ does not exceed $l_2''-l_2'$ and
the $\sF^{l_2''}$\+resolution dimension of any object of
$\sF^{l_2'}$ does not exceed $l_2''-l_2'$.
 Then one applies~\cite[Proposition~A.5.6]{Pcosh}, as in the proof
of Proposition~\ref{maximal-classes-essentially-the-same}.
\end{proof}

 In particular, the unbounded derived category $\sD(\sE^{l_2})$ of
the minimal corresponding class of left $\C$\+comodules $\sE^{l_2}$
is the same for all $l_2\ge d_2$ and the unbounded derived category
$\sD(\sF^{l_2})$ of the minimal corresponding class of left
$\D$\+contramodules $\sF^{l_2}$ is the same for all $l_2\ge d_2$.
 We put
$$
 \sD^{\L^\bu}_{\prime}(\C\comodl)=\sD(\sE^{l_2})
 \quad\text{and}\quad
 \sD^{\L^\bu}_{\prime\prime}(\D\contra)=\sD(\sF^{l_2}).
$$

 As a particular case of the discussion in
Section~\ref{abstract-classes-secn}, the pair of full subcategories
$\sD_\sA^{\le0}(\sE^{l_2})$ and $\sD^{\ge0}(\sE^{l_2})$ forms
a t\+structure of the derived type with the heart $\sA=\C\comodl$ on
the triangulated category $\sD^{\L^\bu}_{\prime}(\C\comodl)$, while
the pair of full subcategories $\sD^{\le0}(\sF^{l_2})$ and
$\sD_\sB^{\ge0}(\sF^{l_2})$ forms a t\+structure of the derived type
with the heart $\sB=\D\contra$ on the triangulated category
$\sD^{\L^\bu}_{\prime\prime}(\D\contra)$.
 It is easy to see that these t\+structures do not depend on the choice
of a parameter $l_2\ge d_2$.

 Moreover, there is a canonical Verdier quotient functor
$\sD^{\L^\bu}_{\prime}(\C\comodl)\rarrow\sD(\C\comodl)$ that has
a right adjoint, and similarly, there is a canonical Verdier quotient
functor $\sD^{\L^\bu}_{\prime\prime}(\D\contra)\rarrow\sD(\D\contra)$
that has a left adjoint.

 The next theorem provides, in particular, a triangulated equivalence
$$
 \sD^{\L^\bu}_{\prime}(\C\comodl)=\sD(\sE^{l_2})\simeq
 \sD(\sF^{l_2})=\sD^{\L^\bu}_{\prime\prime}(\D\contra).
$$
 Thus, once again, we obtain a pair of t\+structures of the derived
type with the hearts $\sA$ and $\sB$ on one and the same triangulated
category.

\begin{thm} \label{coalgebra-upper-main-thm}
 For any symbol\/ $\star=\b$, $+$, $-$, $\varnothing$, $\abs+$, $\abs-$,
or~$\abs$, there is a triangulated equivalence $\sD^\star(\sE^{l_2})
\simeq\sD^\star(\sF^{l_2})$ provided by (appropriately defined)
mutually inverse derived functors\/ $\boR\Hom_\C(\L^\bu,{-})$ and\/
$\L^\bu\ocn_\D^\boL{-}$.
\end{thm}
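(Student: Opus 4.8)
The plan is to recognize this statement as an immediate specialization of Theorem~\ref{coalgebra-generalized-main-thm}. That theorem delivers, for \emph{any} pair of full subcategories $\sE\subset\C\comodl$ and $\sF\subset\D\contra$ satisfying the conditions~(I\+-IV), a triangulated equivalence $\sD^\star(\sE)\simeq\sD^\star(\sF)$ for each symbol $\star=\b$, $+$, $-$, $\varnothing$, $\abs+$, $\abs-$, or~$\abs$, realized by the appropriately defined mutually inverse derived functors $\boR\Hom_\C(\L^\bu,{-})$ and $\L^\bu\ocn_\D^\boL{-}$ (constructed via the technique of~\cite[Appendix~A]{Pps}). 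Hence all that is required here is to exhibit the minimal corresponding classes $\sE^{l_2}$ and $\sF^{l_2}$ as one such admissible pair.

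The single input I would invoke is Proposition~\ref{minimal-corresponding-classes-prop}, which asserts precisely that, with $l_1=d_1$ and any fixed $l_2\ge d_2$, the pair $(\sE^{l_2},\sF^{l_2})$ satisfies the conditions~(I\+-IV). This is exactly the content built into the iterative generation procedure defining these classes. Feeding this pair into Theorem~\ref{coalgebra-generalized-main-thm} then yields the desired triangulated equivalences $\sD^\star(\sE^{l_2})\simeq\sD^\star(\sF^{l_2})$, with the mutually inverse derived functors being the restrictions of $\boR\Psi=\boR\Hom_\C(\L^\bu,{-})$ and $\boL\Phi=\L^\bu\ocn_\D^\boL{-}$ furnished in the proof of that theorem.

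There is no genuine obstacle intrinsic to this statement: the substantive work has already been carried out, first in establishing Theorem~\ref{coalgebra-generalized-main-thm} and then in verifying conditions~(I\+-IV) for the minimal classes in Proposition~\ref{minimal-corresponding-classes-prop}. The deduction here mirrors exactly the way Theorem~\ref{coalgebra-lower-main-thm} was obtained from Theorem~\ref{coalgebra-generalized-main-thm} for the Bass and Auslander classes $\sE_{l_1}$ and $\sF_{l_1}$; the only formal difference is that the admissibility of the pair is now supplied by Proposition~\ref{minimal-corresponding-classes-prop} rather than by Lemmas~\ref{coalgebra-E-F-closed-kernels-cokernels}, \ref{coalgebra-contains-injectives-projectives}, and~\ref{bass-auslander-taken-into-each-other}. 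Accordingly, I would write the proof as a one-line reduction to Theorem~\ref{coalgebra-generalized-main-thm} applied to $(\sE^{l_2},\sF^{l_2})$.
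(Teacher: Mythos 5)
Your proposal is correct and matches the paper exactly: the paper's proof is the one-line remark that this is ``another particular case of Theorem~\ref{coalgebra-generalized-main-thm},'' with the admissibility of the pair $(\sE^{l_2},\sF^{l_2})$ supplied by Proposition~\ref{minimal-corresponding-classes-prop} just as you describe.
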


\begin{proof}
 This is another particular case of
Theorem~\ref{coalgebra-generalized-main-thm}.
\end{proof}

\Section{Dualizing Complexes}

 Let $\C$ and $\D$ be coassociative coalgebras over~$k$.
 A \emph{dualizing complex} of $\C$\+$\D$\+bicomodules $\L^\bu=\K^\bu$
is a pseudo-dualizing complex (according to the definition in
Section~\ref{auslander-bass-secn}) satisfying the following additional
condition:
\begin{enumerate}
\renewcommand{\theenumi}{\roman{enumi}}
\item As a complex of left $\C$\+comodules, $\K^\bu$ is quasi-isomorphic
to a finite complex of injective $\C$\+comodules, and as a complex of
right $\D$\+comodules, $\K^\bu$ is quasi-isomorphic to a finite complex
of injective $\D$\+comodules.
\end{enumerate}
 In view of Lemma~\ref{sqfcp-complexes-lemma}, the conditions~(i)
and~(ii) taken together can be equivalently restated by saying that,
as a complex of left $\C$\+comodules, $\K^\bu$ is quasi-isomorphic to
a finite complex of quasi-finitely cogenerated injective
$\C$\+comodules, and as a complex of right $\D$\+comodules, $\K^\bu$ is
quasi-isomorphic to a finite complex of quasi-finitely cogenerated
injective $\D$\+comodules.

 Let us choose the parameter $l_2$ in such a way that $\K^\bu$ is
quasi-isomorphic to a complex of (quasi-finitely cogenerated) injective
left $\C$\+comodules concentrated in the cohomological degrees
$-d_1\le m\le l_2$ and to a complex of (quasi-finitely cogenerated)
injective right $\D$\+comodules concentrated in the cohomological
degrees $-d_1\le m\le l_2$.

\begin{prop} \label{dualizing-minimal-classes}
 Let\/ $\L^\bu=\K^\bu$ be a dualizing complex of\/
$\C$\+$\D$\+bicomodules, and let the parameter $l_2$ be chosen as
stated above.
 Then the related minimal corresponding classes\/ $\sE^{l_2}=
\sE^{l_2}(\K^\bu)$ and\/ $\sF^{l_2}=\sF^{l_2}(\K^\bu)$ coincide with
the classes of injective left\/ $\C$\+comodules and projective left\/
$\D$\+contramodules, $\sE^{l_2}=\C\comodl_\inj$ and\/
$\sF^{l_2}=\D\contra_\proj$.
\end{prop}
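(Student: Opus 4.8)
The plan is to verify that the pair of full subcategories
$(\C\comodl_\inj,\D\contra_\proj)$ itself satisfies the
conditions~(I\+-IV) of Section~\ref{abstract-classes-secn} with
the parameters $l_1=d_1$ and our chosen~$l_2$; the assertion then
follows from the minimality statement in
Proposition~\ref{minimal-corresponding-classes-prop}.
Indeed, granting~(I\+-IV) for this pair, minimality yields
$\sE^{l_2}\subset\C\comodl_\inj$ and $\sF^{l_2}\subset\D\contra_\proj$,
while the opposite inclusions hold because $\sE^{l_2}$ and $\sF^{l_2}$,
satisfying~(I) and~(II), must contain all the injective
$\C$\+comodules and all the projective $\D$\+contramodules.

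Conditions~(I) and~(II) would be immediate from the standard splitting
properties of injectives and projectives.
An extension of injective $\C$\+comodules splits, so its middle term is
a direct sum of two injectives and hence injective; the cokernel of an
injective morphism of injective comodules is a direct summand of an
injective, hence injective; and $\C\comodl_\inj$ contains all the
injectives by definition.
The verification of~(II) for $\D\contra_\proj$ is strictly dual, using
that extensions and surjections of projective contramodules split.

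The content of the proposition lies in conditions~(III) and~(IV), which
is where the finite injective dimension built into the definition~(i) of
a dualizing complex (as opposed to a merely pseudo-dualizing one) is
used; I describe~(III), condition~(IV) being dual.
The key step is to produce a finite resolution
$\K^\bu\rarrow\widetilde{\I}^\bu$ of $\K^\bu$ by $\C$\+$\D$\+bicomodules
$\widetilde{\I}^m$ that are injective as right $\D$\+comodules,
concentrated in the cohomological degrees $-d_1\le m\le l_2$, with
$\K^\bu\rarrow\widetilde{\I}^\bu$ a quasi-isomorphism of bicomodules.
Such a relative injective resolution exists because every bicomodule $N$
embeds into $N\ot_k\D$, which is injective as a right $\D$\+comodule; the
\emph{finiteness} of the degree range is obtained by truncation, using
that condition~(i) makes $\K^\bu$ quasi-isomorphic, as a complex of right
$\D$\+comodules, to a complex of injective right $\D$\+comodules in
degrees $-d_1\le m\le l_2$, so that the appropriate cosyzygy of the
relative resolution is again injective as a right $\D$\+comodule (the
relative injective dimension being bounded by the absolute injective
dimension as a right $\D$\+comodule).
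Now let $\E$ be any injective $\C$\+comodule.
Since $\E$ is injective, $\Hom_\C({-},\E)$ is exact and $\E$ is homotopy
injective, so
$\Hom_\C(\widetilde{\I}^\bu,\E)\rarrow\Hom_\C(\K^\bu,\E)=
\boR\Hom_\C(\K^\bu,\E)$ is a quasi-isomorphism; hence
$\Hom_\C(\widetilde{\I}^\bu,\E)$ is a complex of $\D$\+contramodules in
degrees $-l_2\le m\le d_1=l_1$ representing $\boR\Hom_\C(\K^\bu,\E)$.
Each of its terms lies in $\D\contra_\proj$: writing $\E$ as a direct
summand of a cofree comodule $\C\ot_kV$, the term
$\Hom_\C(\widetilde{\I}^m,\E)$ is a direct summand of
$\Hom_\C(\widetilde{\I}^m,\,\C\ot_kV)\simeq\Hom_k(\widetilde{\I}^m,V)$,
a projective left $\D$\+contramodule because $\widetilde{\I}^m$ is
injective as a right $\D$\+comodule (cf.\ the proof of
Lemma~\ref{coalgebra-contains-injectives-projectives}), and projectives
are closed under direct summands.

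Condition~(IV) is strictly dual: condition~(i) on the left side yields a
finite resolution $\K^\bu\rarrow\widetilde{\J}^\bu$ by bicomodules
injective as left $\C$\+comodules in degrees $-d_1\le m\le l_2$, and for a
projective contramodule $\F$ one has $\K^\bu\ocn_\D^\boL\F=\K^\bu\ocn_\D\F$
with the functor ${-}\ocn_\D\F$ exact, so $\widetilde{\J}^\bu\ocn_\D\F$
represents $\K^\bu\ocn_\D^\boL\F$ in degrees $-l_1\le m\le l_2$ with
injective terms (reducing $\F$ to a free contramodule $\Hom_k(\D,V)$ makes
each term a direct summand of the injective comodule
$\widetilde{\J}^m\ot_kV$).
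The main obstacle is precisely the construction of these one-sided
relative injective bicomodule resolutions with the stated degree bounds:
this is the one place where the dualizing, rather than merely
pseudo-dualizing, hypothesis is indispensable, and it requires the
relative-homological-algebra truncation to be carried out carefully, in
the spirit of the coresolution-dimension bookkeeping of
Proposition~\ref{maximal-classes-essentially-the-same} and
\cite[Section~A.5]{Pcosh}, so as to keep the terms within the prescribed
class and the complex within the degrees $-d_1\le m\le l_2$.
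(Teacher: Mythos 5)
Your proof is correct and shares the paper's overall skeleton---verify conditions~(I\+-IV) for the pair $(\C\comodl_\inj,\D\contra_\proj)$ and invoke the minimality statement of Proposition~\ref{minimal-corresponding-classes-prop}---but it executes the substantive conditions~(III) and~(IV) by a genuinely different and heavier route. The paper first reduces the test object to a cofree comodule $\E=\C\ot_kV$ (resp.\ a free contramodule $\F=\Hom_k(\D,V)$) and then uses the adjunction isomorphisms $\Hom_\C(\K^\bu,\C\ot_kV)\simeq\Hom_k(\K^\bu,V)$ and $\K^\bu\ocn_\D\Hom_k(\D,V)\simeq\K^\bu\ot_kV$: these depend on $\K^\bu$ only through its one-sided comodule structure, so one can directly substitute the finite one-sided injective resolutions ${}'\J^\bu$ and ${}''\J^\bu$ supplied by condition~(i) and the choice of~$l_2$, the functors $\Hom_k({-},V)$ and ${-}\ot_kV$ being exact over a field; no resolution of $\K^\bu$ by bicomodules is ever needed. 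You instead construct a quasi-isomorphism of complexes of bicomodules $\K^\bu\rarrow\widetilde{\I}^\bu$ with terms injective as right $\D$\+comodules and apply $\Hom_\C({-},\E)$. This does work, but the step you yourself flag as the main obstacle---truncating the relative coresolution built from $N\mapsto N\ot_k\D$ at degree~$l_2$ so that the cosyzygy $\ker(d^{l_2})$ remains injective as a right $\D$\+comodule---is genuine extra work (an $\Ext$\+vanishing/dimension-shifting argument using that ${}'\J^\bu$ is concentrated in degrees $\le l_2$) that the paper's route avoids entirely; you sketch it plausibly but do not carry it out. What your route buys in exchange is a cleaner treatment of non-cofree injectives $\E$: you take direct summands termwise of the honest complex $\Hom_\C(\widetilde{\I}^\bu,\E)$, whereas the paper's reduction ``one can assume $\E$ is cofree'' leaves implicit that a direct summand of an object representable by a complex with terms in $\D\contra_\proj$ in degrees $[-l_2,d_1]$ is again so representable. (Incidentally, the paper's proof opens by saying it suffices to check (I\+-IV) for $\sE=\C\comodl$ and $\sF=\D\contra$; this is evidently a slip for $\C\comodl_\inj$ and $\D\contra_\proj$, and your version states the intended classes correctly.)
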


\begin{proof}
 It suffices to check that the conditions (I\+-IV) hold for
$\sE=\C\comodl_\inj$ and $\sF=\D\contra_\proj$.
 Indeed, the conditions~(I\+-II) are obvious in this case.
 To check~(III), one can assume that $\E$ is a cofree left
$\C$\+comodule, $\E=\C\ot_kV$, where $V$ is a $k$\+vector space.
 Then $\boR\Hom_\C(\K^\bu,\E)=\Hom_\C(\K^\bu,\E)\simeq\Hom_k(\K^\bu,V)$
as a complex of left $\D$\+contramodules.
 Choose a complex of injective right $\D$\+comodules ${}'\J^\bu$
quasi-isomorphic to $\K^\bu$ and concentrated in the cohomological
degrees $-d_1\le m\le l_2$.
 Then the derived category object $\boR\Hom_\C(\K^\bu,\E)\in
\sD^+(\D\contra)$ is represented by the complex of projective
left $\D$\+contramodules $\Hom_k({}'\J^\bu,V)$ concentrated in
the cohomological degrees $-l_2\le m\le d_1$.
 Similarly, to check~(IV), one can assume that $\F$ is a free left
$\D$\+contramodule, $\F=\Hom_k(\D,V)$.
 Then $\K^\bu\ocn_\D^\boL\F=\K^\bu\ocn_\D\F\simeq\K^\bu\ot_kV$ as
a complex of left $\C$\+comodules.
 Choose a complex of injective left $\C$\+comodules ${}''\J^\bu$
quasi-isomorphic to $\K^\bu$ and concentrated in the cohomological
degrees $-d_1\le m\le l_2$.
 Then the derived category object $\K^\bu\ocn_\D^\boL\F\in
\sD^-(\C\comodl)$ is represented by the complex of injective left
$\C$\+comodules ${}''\J^\bu\ot_kV$ concentrated in the cohomological
degrees $-d_1\le m\le l_2$.
\end{proof}

 It is clear from Proposition~\ref{dualizing-minimal-classes}, in view
of the triangulated equivalences $\Hot(\C\comodl_\inj)\simeq
\sD^\co(\C\comodl)$ and $\Hot(\D\contra_\proj)\simeq
\sD^\ctr(\D\contra)$ \cite[Section~4.4]{Pkoszul} mentioned in
the formula~\eqref{derived-co-contra} in
Section~\ref{introd-dedualizing-bicomod}, that for a dualizing
complex of $\C$\+$\D$\+bicomodules $\K^\bu$ one has {\hbadness=1250
$$
 \sD^{\K^\bu}_{\prime}(\C\comodl)=\sD^\co(\C\comodl)
 \quad\text{and}\quad
 \sD^{\K^\bu}_{\prime\prime}(\D\contra)=\sD^\ctr(\D\contra).
$$

}\begin{cor} \label{coalgebra-dualizing-main-cor}
 Let\/ $\C$ and\/ $\D$ be coassociative coalgebras over~$k$, and\/
$\K^\bu$ be a dualizing complex of\/ $\C$\+$\D$\+bicomodules.
 Then there is a triangulated equivalence\/ $\sD^\co(\C\comodl)\simeq
\sD^\ctr(\D\contra)$ provided by the mutually inverse derived functors\/
$\boR\Hom_\C(\K^\bu,{-})$ and\/ $\K^\bu\ocn_\D^\boL{-}$.

 Furthermore, there is a commutative diagram of triangulated
equivalences
$$
\begin{tikzcd}
\sD^\co(\C\comodl)\arrow[rrrrdddd, bend right=7,
"{\boR\Hom_\C(\K^\bu,{-})}"']
\arrow[dddd, Leftrightarrow, no head, no tail] &&&&
\sD^\co(\D\comodl)\arrow[llll,"{\K^\bu\suboc_\D^\boR{-}}"']
\arrow[dddd, Leftrightarrow, no head, no tail] \\ \\ \\ \\
\sD^\ctr(\C\contra)\arrow[rrrr, "{\boL\Cohom_\C(\K^\bu,{-})}"'] &&&&
\sD^\ctr(\D\contra)\arrow[lllluuuu, bend right=7,
"{\K^\bu\ocn_\D^\boL{-}}"']
\end{tikzcd}
$$
where the vertical double lines denote the derived comodule-contramodule
correspondence equivalences~\eqref{derived-co-contra}, and
the horizontal arrows are the derived functors of cotensor product\/
$\oc_\D$ and cohomomorphisms\/ $\Cohom_\C$.
\end{cor}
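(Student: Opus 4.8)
The plan is to treat the two assertions separately, the first being essentially immediate and the second requiring a comparison of the abstractly constructed derived functors along the comodule--contramodule correspondence. For the first assertion I would combine Theorem~\ref{coalgebra-upper-main-thm} with Proposition~\ref{dualizing-minimal-classes}: by the latter, for a dualizing complex $\K^\bu$ with the parameter $l_2$ chosen as above one has $\sE^{l_2}=\C\comodl_\inj$ and $\sF^{l_2}=\D\contra_\proj$, so the case $\star=\varnothing$ of Theorem~\ref{coalgebra-upper-main-thm} reads $\sD(\C\comodl_\inj)\simeq\sD(\D\contra_\proj)$, the equivalence being provided by $\boR\Hom_\C(\K^\bu,{-})$ and $\K^\bu\ocn_\D^\boL{-}$. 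Since $\C\comodl_\inj$ and $\D\contra_\proj$ are split exact categories, their derived categories coincide with $\Hot(\C\comodl_\inj)$ and $\Hot(\D\contra_\proj)$, which are identified with $\sD^\co(\C\comodl)$ and $\sD^\ctr(\D\contra)$ via~\eqref{derived-co-contra}; this yields the first equivalence.

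For the commutative diagram, write $\Xi_\C\:\sD^\co(\C\comodl)\simeq\sD^\ctr(\C\contra)$ and $\Xi_\D\:\sD^\co(\D\comodl)\simeq\sD^\ctr(\D\contra)$ for the two vertical correspondence equivalences, realized on adapted complexes by the underived equivalences $\Hom_\C(\C,{-})\:\C\comodl_\inj\simeq\C\contra_\proj$ and $\D\ocn_\D{-}\:\D\contra_\proj\simeq\D\comodl_\inj$ of~\eqref{underived-co-contra}. The diagram is made of two triangles sharing the diagonal equivalences of the first assertion, so it suffices to produce natural isomorphisms $\boR\Hom_\C(\K^\bu,{-})\cong\boL\Cohom_\C(\K^\bu,{-})\circ\Xi_\C$ and $\K^\bu\ocn_\D^\boL{-}\cong(\K^\bu\suboc_\D^\boR{-})\circ\Xi_\D^{-1}$. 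I would verify each by computing both sides on adapted complexes. Representing an object of $\sD^\co(\C\comodl)$ by a complex $\J^\bu$ of injective $\C$\+comodules, the functor $\boR\Hom_\C(\K^\bu,{-})$ is computed (by the construction of the derived functor in Theorem~\ref{coalgebra-generalized-main-thm} on $\sE^{l_2}=\C\comodl_\inj$) as the total complex $\Hom_\C(\K^\bu,\J^\bu)$, while $\Xi_\C$ sends $\J^\bu$ to the complex of projective $\C$\+contramodules $\Hom_\C(\C,\J^\bu)$, on which $\boL\Cohom_\C(\K^\bu,{-})$ is computed as $\Cohom_\C(\K^\bu,\Hom_\C(\C,\J^\bu))$.

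Thus the first triangle reduces to the natural isomorphism of left $\D$\+contramodules
\[
 \Hom_\C(\K,\J)\;\cong\;\Cohom_\C(\K,\Hom_\C(\C,\J))
\]
for a $\C$\+$\D$\+bicomodule $\K$ and an injective left $\C$\+comodule $\J$. Dually, representing an object of $\sD^\ctr(\D\contra)$ by a complex $\P^\bu$ of projective $\D$\+contramodules, the functor $\K^\bu\ocn_\D^\boL{-}$ is computed as $\K^\bu\ocn_\D\P^\bu$, while $\Xi_\D^{-1}$ sends $\P^\bu$ to the complex of injective $\D$\+comodules $\D\ocn_\D\P^\bu$, on which $\K^\bu\suboc_\D^\boR{-}$ is computed as $\K^\bu\oc_\D(\D\ocn_\D\P^\bu)$; so the second triangle reduces to the natural isomorphism of left $\C$\+comodules $\K\ocn_\D\P\cong\K\oc_\D(\D\ocn_\D\P)$ for a projective left $\D$\+contramodule $\P$.

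These two underived isomorphisms are the $\C$\nobreakdash- and $\D$\+instances of the standard compatibilities relating the comodule homomorphism, cohomomorphism, contratensor, and cotensor functors to the comodule--contramodule correspondence, which I would extract from~\cite{Prev,Psemi}; granting them (together with the fact that $\Cohom_\C({-},\P)$ is exact for projective $\P$, which is itself a consequence of the first isomorphism), both triangles commute. Since the diagonals and the verticals $\Xi_\C$, $\Xi_\D$ are equivalences, the horizontal functors $\boL\Cohom_\C(\K^\bu,{-})$ and $\K^\bu\suboc_\D^\boR{-}$ are then equivalences as well, being composites of equivalences, so the whole diagram is a commutative diagram of triangulated equivalences. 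I expect the main obstacle to lie not in these isomorphisms themselves but in the bookkeeping: checking that they are natural in $\J$ (resp.\ $\P$) and compatible with the second coaction, so that they upgrade to isomorphisms of complexes of $\D$\+contramodules (resp.\ $\C$\+comodules) after $\K$ is replaced by the finite complex $\K^\bu$, and in matching the abstractly defined derived functors of Theorem~\ref{coalgebra-generalized-main-thm} with the naive termwise functors on adapted complexes; finiteness of $\K^\bu$ is what keeps the total complexes and boundedness under control throughout.
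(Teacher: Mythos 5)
Your proposal is correct and follows essentially the same route as the paper's own proof: the first assertion is deduced from Theorem~\ref{coalgebra-upper-main-thm} via Proposition~\ref{dualizing-minimal-classes} and the identifications $\sD^\co(\C\comodl)\simeq\Hot(\C\comodl_\inj)$, \,$\sD^\ctr(\D\contra)\simeq\Hot(\D\contra_\proj)$, and the commutativity of the two triangles is reduced, by computing all functors on adapted complexes, to exactly the two underived isomorphisms $\K\oc_\D(\D\ocn_\D\P)\simeq\K\ocn_\D\P$ and $\Cohom_\C(\K,\Hom_\C(\C,\J))\simeq\Hom_\C(\K,\J)$ that the paper cites from~\cite[Propositions~3.1.1\+-3.1.2]{Prev} and~\cite[Propositions~5.2.1(a) and~5.2.2(a)]{Psemi}, with the horizontal arrows then being equivalences as composites of equivalences. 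The only differences are cosmetic (your labelling of the triangles and the parenthetical remark about exactness of $\Cohom$ are not needed in the paper's argument).
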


\begin{proof}
 The first assertion is a particular case of
Theorem~\ref{coalgebra-upper-main-thm}.
 In fact, since $\sE^{l_2}=\C\comodl_\inj$ and $\sF^{l_2}=
\D\contra_\proj$ are split exact categories, the assertion of
Theorem~\ref{coalgebra-upper-main-thm} for a dualizing complex
$\L^\bu=\K^\bu$ reduces to triangulated equivalences between
the (bounded or unbounded) homotopy categories
\begin{equation} \label{dualizing-underived}
 \Hom_\C(\K^\bu,{-})\:\Hot^\star(\C\comodl_\inj)\simeq
 \Hot^\star(\D\contra_\proj)\,:\!\K^\bu\ocn_\D{-}
\end{equation}
for all symbols $\star=\b$, $+$, $-$, or~$\varnothing$.
 Here the functors $\Hom_\C(\K^\bu,{-})$ and $\K^\bu\ocn_\D{-}$ do
not even need to be derived as, following the construction
of~\cite[Appendix~A]{Pps}, they are simply applied to any complex
of injective $\C$\+comodules or (respectively) any complex of projective
$\D$\+contramodules.
 Then the resulting complex is replaced by a complex of
projective/injective objects isomorphic to it in the absolute derived
category of contra/comodules to obtain the triangulated functors
providing the equivalence~\eqref{dualizing-underived}.
 Identifying $\sD^\co(\C\comodl)$ with $\Hot(\C\comodl_\inj)$ and
$\sD^\ctr(\D\contra)$ with $\Hot(\D\contra_\proj)$, as mentioned above,
we obtain the desired triangulated equivalence between the coderived
and the contraderived category,
\begin{equation} \label{dualizing-co-contra-derived}
 \boR\Hom_\C(\K^\bu,{-})\:\sD^\co(\C\comodl)\simeq
 \sD^\ctr(\D\contra)\,:\!\K^\bu\ocn_\D^\boL{-}.
\end{equation}

 Similarly, the right derived functor
$$
 \K^\bu\oc_\D^\boR{-}\:\sD^\co(\D\comodl)\lrarrow\sD^\co(\C\comodl)
$$
is constructed as the composition
$$
 \sD^\co(\D\comodl)\simeq\Hot(\D\comodl_\inj)
 \xrightarrow{\K^\bu\oc_\D{-}}
 \Hot(\C\comodl)\rarrow\sD^\co(\C\comodl)
$$
of the underived cotensor product functor $\K^\bu\oc_\D{-}\:
\Hot(\D\comodl_\inj)\rarrow\Hot(\C\comodl)$ with the Verdier quotient
functor $\Hot(\C\comodl)\rarrow\sD^\co(\C\comodl)$.
 The left derived functor {\hbadness=2200
$$
 \boL\Cohom_\C(\K^\bu,{-})\:\sD^\ctr(\C\contra)
 \lrarrow\sD^\ctr(\D\contra)
$$
is} constructed as the composition
$$
 \sD^\ctr(\C\contra)\simeq\Hot(\C\contra_\proj)
 \xrightarrow{\Cohom_\C(\K^\bu,{-})}
 \Hot(\D\contra)\rarrow\sD^\ctr(\D\contra)
$$
of the underived cohomomorphism functor $\Cohom_\C(\K^\bu,{-})\:
\Hot(\C\contra_\proj)\rarrow\Hot(\D\contra)$ with the Verdier quotient
functor $\Hot(\D\contra)\rarrow\sD^\ctr(\D\contra)$.

 We recall that the downwards directed leftmost vertical equivalence is
constructed by applying the functor $\Hom_\C(\C,{-})$ to complexes
of injective left $\C$\+comodules, while the upwards directed rightmost
vertical equivalence is constructed by applying the functor
$\D\ocn_\D{-}$ to complexes of projective left $\D$\+contramodules
(see formula~\eqref{underived-co-contra} in
Section~\ref{introd-dedualizing-bicomod}).
 Now the upper triangule commutes due to the natural isomorphism of left
$\C$\+comodules $\K\oc_\D(\D\ocn_\D\P)\simeq(\K\oc_\D\D)\ocn_\D\P=
\K\ocn_\D\P$, which holds for any projective left $\D$\+contramodule
$\P$ and any $\C$\+$\D$\+bicomodule $\K$ \cite[Proposition~3.1.1]{Prev},
\cite[Proposition~5.2.1(a)]{Psemi}.
 Similarly, the lower triangle commutes due to the natural isomorphism
of left $\D$\+contramodules $\Cohom_\C(\K,\Hom_\C(\C,\J))\simeq
\Hom_\C(\C\oc_\C\K,\>\J)=\Hom_\C(\K,\J)$, which holds for any injective
left $\C$\+comodule $\J$ and any $\C$\+$\D$\+bicomodule $\K$
\cite[Proposition~3.1.2]{Prev}, \cite[Proposition~5.2.2(a)]{Psemi}.

 Finally, the horizontal functors are triangulated equivalences, since
so are the vertical and diagonal functors.
\end{proof}

\begin{lem} \label{dualizing-one-sided-inverse}
 Let\/ $\K^\bu$ be a dualizing complex of\/ $\C$\+$\D$\+bicomodules.
 Then \par
\textup{(a)} there exists a finite complex of injective left\/
$\D$\+comodules\/ ${}'\K^\bu{}\spcheck$ such that the finite complex
of left\/ $\C$\+comodules\/ $\K^\bu\oc_\D{}'\K^\bu{}\spcheck$ is
quasi-isomorphic to\/~$\C$; \par
\textup{(b)} there exists a finite complex of injective right\/
$\C$\+comodules\/ ${}''\K^\bu{}\spcheck$ such that the finite complex
of right\/ $\D$\+comodules\/ ${}''\K^\bu{}\spcheck\oc_\C\K^\bu$ is
quasi-isomorphic to\/~$\D$.
\end{lem}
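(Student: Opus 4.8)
The plan is to build ${}'\K^\bu{}\spcheck$ as the comodule image, under the comodule--contramodule correspondence~\eqref{underived-co-contra} for the coalgebra $\D$, of a suitable \emph{finite} complex of projective left $\D$\+contramodules, and then to identify the cotensor product $\K^\bu\oc_\D{}'\K^\bu{}\spcheck$ with a contratensor product already evaluated in Lemma~\ref{coalgebra-contains-injectives-projectives}(a). I would prove part~(a) in this way and deduce part~(b) from it by a left--right symmetry.

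First I would invoke condition~(i) of the definition of a dualizing complex to choose a \emph{finite} complex of quasi-finitely cogenerated injective right $\D$\+comodules ${}'\J^\bu$ together with a quasi-isomorphism of complexes of right $\D$\+comodules $\K^\bu\rarrow{}'\J^\bu$, and set $\P^\bu=\Hom_k({}'\J^\bu,k)$. Since the $k$\+linear dual of an injective right $\D$\+comodule is a projective left $\D$\+contramodule (as already used in the proof of Lemma~\ref{coalgebra-contains-injectives-projectives}), $\P^\bu$ is a \emph{finite} complex of projective left $\D$\+contramodules. This finiteness is the crux of the matter and the step I expect to be the main obstacle to get right: it is precisely here that the dualizing hypothesis is needed, since for a merely pseudo-dualizing complex condition~(ii) provides only a bounded below injective resolution ${}'\J^\bu$, making $\P^\bu$ bounded above but infinite, and the assertion of the lemma would then fail.

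Next I would reuse the computation from the proof of Lemma~\ref{coalgebra-contains-injectives-projectives}(a), applied to the cofree left $\C$\+comodule $\E=\C$, i.e.\ the case $V=k$, and with the finite resolution ${}'\J^\bu$ in the role of $\I^\bu$. The morphism~\eqref{coalgebra-adjunction-counit-morphism} constructed there then reads as a quasi-isomorphism of finite complexes of left $\C$\+comodules $\K^\bu\ocn_\D\P^\bu\rarrow\C$. In other words, the contratensor product of $\K^\bu$ with the finite projective complex $\P^\bu$ already computes~$\C$.

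Finally I would transport this statement from contramodules to comodules. Setting ${}'\K^\bu{}\spcheck=\D\ocn_\D\P^\bu$, the functor $\D\ocn_\D{-}$ of~\eqref{underived-co-contra} takes projective left $\D$\+contramodules to injective left $\D$\+comodules, so ${}'\K^\bu{}\spcheck$ is a finite complex of injective left $\D$\+comodules, as required. Applying termwise the natural isomorphism $\K\oc_\D(\D\ocn_\D\P)\simeq(\K\oc_\D\D)\ocn_\D\P=\K\ocn_\D\P$ (the same identity used in the proof of Corollary~\ref{coalgebra-dualizing-main-cor}) and passing to total complexes, I would obtain \[ \K^\bu\oc_\D{}'\K^\bu{}\spcheck=\K^\bu\oc_\D(\D\ocn_\D\P^\bu)\simeq\K^\bu\ocn_\D\P^\bu\simeq\C, \] which is part~(a). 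For part~(b), I would view $\K^\bu$ as a dualizing complex of $\D^\rop$\+$\C^\rop$\+bicomodules and use that $\oc_{\C^\rop}$ coincides with $\oc_\C$ after interchanging its two arguments; part~(b) is then exactly part~(a) for this dualizing complex, the output being a finite complex of injective left $\C^\rop$\+comodules, that is, injective right $\C$\+comodules ${}''\K^\bu{}\spcheck$ with ${}''\K^\bu{}\spcheck\oc_\C\K^\bu\simeq\D$.
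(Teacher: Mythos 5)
Your proof is correct and follows essentially the same route as the paper's: the paper obtains the finite complex of projective left $\D$\+contramodules $\P^\bu$ representing $\boR\Hom_\C(\K^\bu,\C)$ by citing Proposition~\ref{dualizing-minimal-classes}, whose verification of condition~(III) is precisely your explicit construction $\P^\bu=\Hom_k({}'\J^\bu,k)$, and then sets ${}'\K^\bu{}\spcheck=\D\ocn_\D\P^\bu$ and invokes the same isomorphism $\K\oc_\D(\D\ocn_\D\P)\simeq\K\ocn_\D\P$ and the quasi-isomorphism from Lemma~\ref{coalgebra-contains-injectives-projectives}(a), with part~(b) likewise deduced by the left--right symmetry. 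The only nitpick is that producing a \emph{finite} complex of \emph{quasi-finitely cogenerated} injective right $\D$\+comodules quasi-isomorphic to $\K^\bu$ uses conditions~(i) and~(ii) together (via Lemma~\ref{sqfcp-complexes-lemma}), not condition~(i) alone, as the paper points out right after the definition of a dualizing complex.
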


\begin{proof}
 Part~(a): by Proposition~\ref{dualizing-minimal-classes}, the derived
category object $\boR\Hom_\C(\K^\bu,\C)\in\sD^+(\D\contra)$ can be
represented by a complex of projective left $\D$\+contramodules $\P^\bu$
concentrated in the cohomological degrees $-l_2\le m\le d_1$.
 Denote by ${}'\K^\bu{}\spcheck$ the complex of injective left
$\D$\+comodules $\D\ocn_\D\P^\bu$.
 Then we have a natural isomorphism of complexes of left $\C$\+comodules
$\K^\bu\oc_\D{}'\K\bu{}\spcheck\simeq\K^\bu\ocn_\D\P^\bu$
by~\cite[Proposition~3.1.1]{Prev} or~\cite[Proposition~5.2.1(a)]{Psemi},
and a natural quasi-isomorphism of complexes of left $\C$\+comodules
$\K^\bu\ocn_\D\P^\bu\rarrow\C$ by
Lemma~\ref{coalgebra-contains-injectives-projectives}(a)
(see formula~\eqref{coalgebra-antidualized-c-homothety-map}).

 Part~(b): since the definition of a dualizing complex of bicomodules
is symmetric with respect to switching the left and right sides and
the roles of the coalgebras $\C$ and $\D$, one can simply apply part~(a)
to the dualizing complex of $\D^\rop$\+$\C^\rop$\+bicomodules~$\K^\rop$.
\end{proof}

\begin{thm} \label{dualizing-derived-morita-takeuchi}
 Let\/ $\C$ and\/ $\D$ be coassociative coalgebras over~$k$, and\/
$\K^\bu$ be a dualizing complex of\/ $\C$\+$\D$\+bicomodules.
 Then \par
\textup{(a)} for any conventional derived category symbol\/ $\star=\b$,
$+$, $-$, or\/~$\varnothing$, there is a triangulated equivalence\/
$\sD^\star(\C\comodl)\simeq\sD^\star(\D\comodl)$ provided by
(appropriately defined) right derived functor\/ $\K^\bu\oc_\D^\boR{-}\:
\sD^\star(\D\comodl)\rarrow\sD^\star(\C\comodl)$; \par
\textup{(b)} for any conventional derived category symbol\/ $\star=\b$,
$+$, $-$, or\/~$\varnothing$, there is a triangulated equivalence\/
$\sD^\star(\C\contra)\simeq\sD^\star(\D\contra)$ provided by
(appropriately defined) left derived functor\/
$\boL\Cohom_\C(\K^\bu,{-})\:\sD^\star(\C\contra)\rarrow
\sD^\star(\D\contra)$.
\end{thm}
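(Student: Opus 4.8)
The plan is to construct, for each part, an explicit quasi\+inverse out of the one\+sided inverse complexes furnished by Lemma~\ref{dualizing-one-sided-inverse}, and to collapse both composites to the identity by means of the associativity of the cotensor product (resp.\ of cohomomorphisms) together with the counit isomorphisms $\C\oc_\C{-}=\Id$, \,$\D\oc_\D{-}=\Id$ (resp.\ $\Cohom_\C(\C,{-})=\Id$, \,$\Cohom_\D(\D,{-})=\Id$). I use throughout that the complexes ${}'\K^\bu{}\spcheck$ and ${}''\K^\bu{}\spcheck$ carry natural $\D$\+$\C$\+bicomodule structures inherited from their construction in the proof of Lemma~\ref{dualizing-one-sided-inverse}, so that the functors below are defined; a functor admitting both a left and a right inverse is then a triangulated equivalence.

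For part~(a), I would represent $\K^\bu$ by a finite complex of injective, hence coflat, right $\D$\+comodules (possible by the definition of a dualizing complex). With such a representative $\K^\bu\oc_\D{-}$ is exact in the comodule variable, takes quasi\+isomorphisms to quasi\+isomorphisms, and preserves each boundedness condition $\star=\b,+,-,\varnothing$ because $\K^\bu$ is finite; thus $\K^\bu\oc_\D^\boR{-}$ descends to a triangulated functor $\sD^\star(\D\comodl)\rarrow\sD^\star(\C\comodl)$ with no resolution of the argument needed. Setting $F'={}''\K^\bu{}\spcheck\oc_\C{-}$ and $F'''={}'\K^\bu{}\spcheck\oc_\C{-}$, defined the same way from the finite complexes of injective comodules of Lemma~\ref{dualizing-one-sided-inverse}, associativity of the cotensor product of coflat complexes gives, for $\M^\bu\in\sD^\star(\D\comodl)$ and $\N^\bu\in\sD^\star(\C\comodl)$,
\[
 (F'\circ(\K^\bu\oc_\D^\boR{-}))(\M^\bu)\simeq
 \bigl({}''\K^\bu{}\spcheck\oc_\C\K^\bu\bigr)\oc_\D\M^\bu\simeq\D\oc_\D\M^\bu=\M^\bu,
\]
\[
 ((\K^\bu\oc_\D^\boR{-})\circ F''')(\N^\bu)\simeq
 \bigl(\K^\bu\oc_\D{}'\K^\bu{}\spcheck\bigr)\oc_\C\N^\bu\simeq\C\oc_\C\N^\bu=\N^\bu,
\]
where the middle quasi\+isomorphisms come from Lemma~\ref{dualizing-one-sided-inverse}(b) and~(a), respectively. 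Hence $\K^\bu\oc_\D^\boR{-}$ has a left and a right inverse and is a triangulated equivalence, with quasi\+inverse $F'\simeq F'''$.

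Part~(b) is dual. The left derived functor $\boL\Cohom_\C(\K^\bu,{-})$ is obtained by applying $\Cohom_\C(\K^\bu,{-})$ to homotopy projective resolutions by projective $\C$\+contramodules and projecting to $\sD^\star(\D\contra)$; that this is well\+defined and preserves the symbol~$\star$ rests on $\K^\bu$ being a finite complex of injective $\C$\+comodules, which is the analogue on the cohomomorphism side of the coflatness used in part~(a). With $G=\boL\Cohom_\C(\K^\bu,{-})$, \,$G'=\boL\Cohom_\D({}'\K^\bu{}\spcheck,{-})$, and $G'''=\boL\Cohom_\D({}''\K^\bu{}\spcheck,{-})$, the associativity (``tensor\+hom'') isomorphism $\Cohom_\D(\N,\Cohom_\C(\K^\bu,{-}))\simeq\Cohom_\C(\K^\bu\oc_\D\N,{-})$ (and its $\C\leftrightarrow\D$ analogue) yields
\[
 (G'\circ G)(\fT)\simeq\boL\Cohom_\C\bigl(\K^\bu\oc_\D{}'\K^\bu{}\spcheck,\,\fT\bigr)
 \simeq\Cohom_\C(\C,\fT)=\fT,
\]
\[
 (G\circ G''')(\fS)\simeq\boL\Cohom_\D\bigl({}''\K^\bu{}\spcheck\oc_\C\K^\bu,\,\fS\bigr)
 \simeq\Cohom_\D(\D,\fS)=\fS,
\]
again by Lemma~\ref{dualizing-one-sided-inverse}(a) and~(b). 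So $\boL\Cohom_\C(\K^\bu,{-})$ has a left and a right inverse and is an equivalence.

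The hard part will be making these constructions precise on the \emph{unbounded} conventional derived categories (the case $\star=\varnothing$): one must verify that cotensoring an arbitrary complex with the finite coflat complex $\K^\bu$---and, dually, applying $\Cohom_\C(\K^\bu,{-})$ to an unbounded homotopy projective complex of projectives---preserves quasi\+isomorphisms and stays inside the conventional derived category, and that the associativity and counit isomorphisms used above persist after passing to these derived functors. These isomorphisms are unproblematic at the level of complexes of coflat comodules and projective contramodules (cf.~\cite[Section~5.2]{Psemi} and~\cite[Section~3.1]{Prev}), and the finiteness of $\K^\bu$, \,${}'\K^\bu{}\spcheck$, and ${}''\K^\bu{}\spcheck$ together with the coflatness/projectivity of their terms is exactly what makes the argument work uniformly for all four symbols~$\star$.
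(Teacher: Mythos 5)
There is a genuine gap, and it sits exactly where you placed your standing assumption: that ${}'\K^\bu{}\spcheck$ and ${}''\K^\bu{}\spcheck$ ``carry natural $\D$\+$\C$\+bicomodule structures inherited from their construction.'' Lemma~\ref{dualizing-one-sided-inverse} produces ${}'\K^\bu{}\spcheck$ only as a complex of injective \emph{left $\D$\+comodules} (namely $\D\ocn_\D\P^\bu$ for a complex of projective left $\D$\+contramodules $\P^\bu$ representing $\boR\Hom_\C(\K^\bu,\C)$), and ${}''\K^\bu{}\spcheck$ only as a complex of injective \emph{right $\C$\+comodules}; no second-side comodule structure is constructed on either, and the quasi-isomorphisms $\K^\bu\oc_\D{}'\K^\bu{}\spcheck\simeq\C$ and ${}''\K^\bu{}\spcheck\oc_\C\K^\bu\simeq\D$ are quasi-isomorphisms of one-sided comodules only. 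Consequently your functors $F'$, $F'''$, $G'$, $G'''$ do not land in (indeed, are not even defined on) the comodule and contramodule categories you need, and the two-sided-inverse argument does not get off the ground. The closing Remark of the paper's section on dualizing complexes makes precisely this point: Takeuchi-style inverse complexes of $\D$\+$\C$\+bicomodules exist (and your argument then does work) only under the \emph{additional} hypothesis that $\K^\bu$ is quasi-isomorphic to a complex of bicomodules that are simultaneously quasi-finitely cogenerated injective left $\C$\+comodules and right $\D$\+comodules. A secondary problem of the same nature occurs already in your construction of the derived functor: replacing $\K^\bu$ by a finite complex of injective right $\D$\+comodules destroys its left $\C$\+comodule structure, since condition~(i) provides that replacement only in $\comodr\D$ and not in $\C\bicomod\D$; the resulting cotensor product is then merely a complex of $k$\+vector spaces.

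The paper's proof is organized to avoid both difficulties. The derived functor is defined by resolving the \emph{variable} argument, i.e., as $\K^\bu\oc_\D\J^\bu$ for an injective coresolution $\J^\bu$ of the left $\D$\+comodule, which retains the left $\C$\+comodule structure carried by $\K^\bu$; the injective replacement ${}'\K^\bu$ of $\K^\bu$ is used only to verify, at the level of complexes of vector spaces, that the cohomology of $\K^\bu\oc_\D\J^\bu$ is concentrated in finitely many degrees, so that the finite-homological-dimension machinery of~\cite[Appendix~A]{Pps} applies for every symbol~$\star$. The equivalence is then obtained without any two-sided inverse: Corollary~\ref{coalgebra-dualizing-main-cor} shows that the induced functor on coderived categories is an equivalence, whence the functor on $\sD(\D\comodl)$ is a Verdier quotient functor, and Lemma~\ref{dualizing-one-sided-inverse}(b) is used only through the composite ${}''\K^\bu{}\spcheck\oc_\C{-}\:\sD(\C\comodl)\rarrow\sD(k\vect)$, which recovers the forgetful functor and therefore shows that the kernel of this Verdier quotient vanishes (and, for $\star=\b$, $+$, $-$, that $\star$\+boundedness is reflected). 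To salvage your approach you would have to either impose the simultaneous two-sided quasi-finite injectivity hypothesis on $\K^\bu$, or otherwise produce the bicomodule structures on the inverse complexes, neither of which is available in the stated generality.
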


\begin{proof}
 Part~(a): as in Theorem~\ref{coalgebra-generalized-main-thm}, the words
``appropriately defined'' here mean ``constructed as
in~\cite[Appendix~A]{Pps}''.
 In fact, a less powerful technology of~\cite[Appendix~B]{Pmgm} is
already sufficient for our purposes here.
 In the context of either of these references, we put
\begin{gather*}
 \sA=\D\comodl=\sE\,\supset\,\sJ=\C\comodl_\inj \\
 \sB=\C\comodl=\sF.
\end{gather*}
 Consider the DG\+functor
$$
 \Psi=\K^\bu\oc_\D{-}\:\sC^+(\sJ)\lrarrow\sC^+(\sB).
$$
 Then the construction of~\cite[Section~A.2]{Pps} provides the derived
functors
$$
 \boR\Psi\:\sD^\star(\D\comodl)\lrarrow\sD^*(\C\comodl)
$$
for all derived category symbols $\star=\b$, $+$, $-$, $\varnothing$,
$\abs+$, $\abs-$, $\co$, or~$\abs$.

 The key observation here is that, for any left $\D$\+comodule $\N$ and
its injective coresolution $\J^\bu$, the complex of left $\C$\+comodules
$\K^\bu\oc_\D\J^\bu$ has cohomology comodules concentrated in
the cohomological degrees $-d_1\le m\le l_2$.
 Indeed, let ${}'\K^\bu$ be a complex of injective right $\D$\+comodules
quasi-isomorphic to $\K^\bu$ whose terms are concentrated in
the cohomological degrees $-d_1\le m\le l_2$.
 Then the complex $\K^\bu\oc_\D\J^\bu$ is quasi-isomorphic, as a complex
of $k$\+vector spaces, to the complex ${}'\K^\bu\oc_\D\J^\bu$, since
for any injective left $\D$\+comodule $\J$ the functor ${-}\oc_\D\J$ is
exact.
 Now for any injective right $\D$\+comodule ${}'\K$ the complex of
$k$\+vector spaces ${}'\K\oc_\D\J^\bu$ is a coresolution of the vector
space ${}'\K\oc_\D\N$, since the functor ${}'\K\oc_\D{-}$ is exact.
 So the cohomology spaces of the complex ${}'\K^\bu\oc_\D\J^\bu$ are
concentrated in the desired cohomological degrees.
 This makes the construction of a derived functor of finite homological
dimension from~\cite[Appendix~A]{Pps} or~\cite[Appendix~B]{Pmgm}
applicable.

 The derived functors $\boR\Psi$ agree with each other as the derived
category symbol~$\star$ varies.
 In particular, for $\star=\co$ and $\star=\varnothing$ we have
a commutative diagram of triangulated functors
$$
\begin{tikzcd}
\sD^\co(\D\comodl) \arrow[rr, "{\K^\bu\suboc_\D^\boR{-}}"]
\arrow[dd, two heads] &&
\sD^\co(\C\comodl) \arrow[dd, two heads] \\ \\
\sD(\D\comodl) \arrow[rr, "{\K^\bu\suboc_\D^\boR{-}}"] &&
\sD(\C\comodl)
\end{tikzcd}
$$
where the vertical arrows are the canonical Verdier quotient functors.
 By Corollary~\ref{coalgebra-dualizing-main-cor}, the upper horizontal
arrow is a triangulated equivalence, hence it follows that the lower
horizontal arrow is, at worst, a Verdier quotient functor.

 Let us check that the kernel of the functor $\K^\bu\oc_\D^\boR{-}\:
\sD(\D\comodl)\rarrow\sD(\C\comodl)$ vanishes.
 By Lemma~\ref{dualizing-one-sided-inverse}(b), there exists a finite
complex of injective right $\C$\+comodules ${}''\K^\bu{}\spcheck$
such that the cotensor product ${}''\K^\bu{}\spcheck\oc_\C\K^\bu$ is
quasi-isomorphic to $\D$ as a complex of right $\D$\+comodules.
 The cotensor product with the complex ${}''\K^\bu{}\spcheck$ defines
a triangulated functor
\begin{equation} \label{cotensor-with-inverse}
 {}''\K^\bu{}\spcheck\oc_\C{-}\:\sD(\C\comodl)\lrarrow\sD(k\vect)
\end{equation}
taking values in the derived category of $k$\+vector spaces.
 The composition
\begin{equation} \label{composition-with-inverse-cotensor}
 \sD(\D\comodl)\xrightarrow{\K^\bu\suboc_\D^\boR{-}}
 \sD(\C\comodl)\xrightarrow{{}''\K^\bu{}\spcheck\suboc_\C{-}}
 \sD(k\vect)
\end{equation}
is the derived functor of cotensor product with
${}''\K^\bu{}\spcheck\oc_\C\K^\bu$, which is isomorphic to
the forgetful functor $\sD(\D\comodl)\rarrow\sD(k\vect)$.
 It follows that any complex of left $\D$\+comodules annihilated by
the functor $\K^\bu\oc_\D^\boR{-}$ is acyclic.
 We have proved that the functor
$$
 \K^\bu\oc_\D^\boR{-}\:\sD(\D\comodl)\lrarrow\sD(\C\comodl)
$$
is a triangulated equivalence.

 Finally, for $\star=\b$, $+$, or~$-$ we have a commutative diagram of
triangulated functors
$$
\begin{tikzcd}
\sD^\star(\D\comodl) \arrow[rr, "{\K^\bu\suboc_\D^\boR{-}}"]
\arrow[dd, tail] &&
\sD^\star(\C\comodl) \arrow[dd, tail] \\ \\
\sD(\D\comodl) \arrow[rr, "{\K^\bu\suboc_\D^\boR{-}}"] &&
\sD(\C\comodl)
\end{tikzcd}
$$
where the vertical arrows are the canonical fully faithful inclusions.
 We already know that the lower horizontal arrow is an equivalence;
hence the upper horizontal arrow is, at worst, fully faithful.

 Let us check that a complex of left $\D$\+comodules has
$\star$\+bounded cohomology comodules whenever its image under
the functor $\K^\bu\oc_\D^\boR{-}$ has $\star$\+bounded cohomology.
 Indeed, the triangulated functor~\eqref{cotensor-with-inverse}
takes $\star$\+bounded complexes to $\star$\+bounded complexes,
$$
 {}''\K^\bu{}\spcheck\oc_\C{-}\:
 \sD^\star(\C\comodl)\lrarrow\sD^\star(k\vect),
$$
so the desired assertion follows from the fact that the composition
of functors~\eqref{composition-with-inverse-cotensor} is isomorphic
to the forgetful functor $\sD(\D\comodl)\rarrow\sD(k\vect)$.
 The proof of part~(a) is finished.

 Part~(b) is similar.
 In the context of~\cite[Appendix~A]{Pps} or~\cite[Appendix~B]{Pmgm},
we put
\begin{gather*}
 \sB=\C\contra=\sF\,\supset\,\sP=\C\contra_\proj \\
 \sA=\D\contra=\sE.
\end{gather*}
 Consider the DG\+functor
$$
 \Phi=\Cohom_\C(\K^\bu,{-})\:\sC^-(\sP)\lrarrow\sC^-(\sA).
$$
 Then the construction of~\cite[Section~A.3]{Pps} provides the derived
functors
$$
 \boL\Phi\:\sD^\star(\C\contra)\lrarrow\sD^*(\D\contra)
$$
for all derived category symbols $\star=\b$, $+$, $-$, $\varnothing$,
$\abs+$, $\abs-$, $\ctr$, or~$\abs$.
 The key observation here is that, for any left $\C$\+contramodule
$\fT$ and its projective resolution $\P^\bu$, the complex of left
$\D$\+contramodules $\Cohom_\C(\K^\bu,\P^\bu)$ has cohomology
contramodules concentrated in the cohomological degrees
$-l_2\le m\le d_1$.

 The rest of the argument uses
Lemma~\ref{dualizing-one-sided-inverse}(a) in the way similar
to the above.
\end{proof}

\begin{rem}
 The exposition above simplifies considerably when a dualizing complex
$\K^\bu$ is quasi-isomorphic to a (finite) complex of
$\C$\+$\D$\+bicomodules that are \emph{simultaneously} quasi-finitely
cogenerated injective left $\C$\+comodules and quasi-finitely
cogenerated injective right $\D$\+comodules.
 Then, following~\cite[Sections~1.8\+-1.9]{Tak}
(cf.\ Remark~\ref{takeuchi-co-hom-explained} above), both the complexes
${}'\K^\bu{}\spcheck$ and ${}''\K^\bu{}\spcheck$ in
Lemma~\ref{dualizing-one-sided-inverse} can be constructed
as complexes of $\D$\+$\C$\+bicomodules together with a natural
quasi-isomorphism of complexes of $\C$\+$\C$\+bicomodules $\C\rarrow
\K^\bu\oc_\D{}''\K^\bu{}\spcheck$ and a natural quasi-isomorphism of
complexes of $\D$\+$\D$\+bicomodules $\D\rarrow {}'\K^\bu{}\spcheck
\oc_\C\K^\bu$ (implying that ${}'\K^\bu{}\spcheck$ and
${}''\K^\bu{}\spcheck$ are quasi-isomorphic to each other as complexes
of $\D$\+$\C$\+bicomodules).
 Given such ``inverse complex(es) of $\D$\+$\C$\+bicomodules'' to
$\K^\bu$, one easily constructs the derived equivalences in
Theorem~\ref{dualizing-derived-morita-takeuchi} (including even
the case of $\star=\abs+$, $\abs-$, or~$\abs$) similarly to
the underived theory of~\cite{Tak} (cf.~\cite[Sections~2\+-3]{Far};
see also the discussion of Morita morphisms and Morita equivalences
in~\cite[Section~7.5]{Psemi}).
\end{rem}

\Section{Dedualizing Complexes}

 Recall the following definitions from~\cite[Section~3]{Pmc}.
 A finite complex of left $\C$\+comodules $\M^\bu$ is said to have
\emph{projective dimension~$\le l$} if $\Ext_\C^n(\M^\bu,\N)=0$
for all left $\C$\+comodules $\N$ and all the integers $n>l$.
 A finite complex of right $\D$\+comodules $\N^\bu$ is said to
have \emph{contraflat dimension~$\le l$} if $\Ctrtor^\D_n(\N^\bu,\fT)
=0$ for all left $\D$\+contramodules $\fT$ and all the integers $n>l$.
 Here we use the notation $\Ext_\C^*$ and $\Ctrtor^\D_*$ introduced
in the beginning of Section~\ref{auslander-bass-secn}.

 A \emph{dedualizing complex} of $\C$\+$\D$\+bicomodules $\L^\bu=\B^\bu$
is a pseudo-dualizing complex (according to the definition in
Section~\ref{auslander-bass-secn}) satisfying the following additional
condition:
\begin{enumerate}
\renewcommand{\theenumi}{\roman{enumi}}
\item the complex $\B^\bu$ has finite projective dimension as a complex
of left $\C$\+co\-modules and finite contraflat dimension as a complex
of right $\D$\+comodules.
\end{enumerate}
 This is a version of the definition of a dedualizing complex
in~\cite[Section~3]{Pmc}, extended from the case of cocoherent
coalgebras to arbitrary ones and from the case of finitely copresented
comodules to quasi-finitely copresented ones.

 Let us choose the parameter~$l_1$ in such a way that the projective
dimension of the complex of left $\C$\+comodules $\B^\bu$ does not
exceed~$l_1$ and the contraflat dimension of the complex of right
$\D$\+comodules $\B^\bu$ does not exceed~$l_1$.
 One can easily see that any one of these two conditions implies
$l_1\ge d_1$ (take $\N=\C$ or $\fT=\D^*$ in the above definitions of
the projective and contraflat dimensions).

\begin{lem} \label{dedualizing-bass-auslander}
 Let\/ $\B^\bu$ be a dedualizing complex of\/ $\C$\+$\D$\+bicomodules,
and let the parameter~$l_1$ be chosen as stated above.
 Then the related Bass and Auslander classes\/ $\sE_{l_1}=
\sE_{l_1}(\B^\bu)$ and\/ $\sF_{l_1}=\sF_{l_1}(\B^\bu)$ coincide with
the whole categories of left\/ $\C$\+comodules and left\/
$\D$\+contramodules, $\sE_{l_1}=\C\comodl$ and\/
$\sF_{l_1}=\D\contra$.
\end{lem}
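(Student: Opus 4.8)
The plan is to check directly that both pairs fulfill the defining conditions of the Bass and Auslander classes recalled in Section~\ref{auslander-bass-secn}. By definition, a comodule $\E$ lies in $\sE_{l_1}$ exactly when $\Ext_\C^n(\B^\bu,\E)=0$ for all $n>l_1$ and the counit $\B^\bu\ocn_\D^\boL\boR\Hom_\C(\B^\bu,\E)\rarrow\E$ is an isomorphism in $\sD^-(\C\comodl)$; dually, a contramodule $\F$ lies in $\sF_{l_1}$ exactly when $\Ctrtor^\D_n(\B^\bu,\F)=0$ for all $n>l_1$ and the unit $\F\rarrow\boR\Hom_\C(\B^\bu,\>\B^\bu\ocn_\D^\boL\F)$ is an isomorphism in $\sD^+(\D\contra)$. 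The first condition in each case holds for \emph{every} object by the very choice of~$l_1$: the finite projective dimension $\le l_1$ of $\B^\bu$ as a complex of left $\C$\+comodules says precisely that $\Ext^n_\C(\B^\bu,\N)=0$ for all $\N\in\C\comodl$ and $n>l_1$, and the finite contraflat dimension $\le l_1$ of $\B^\bu$ as a complex of right $\D$\+comodules says precisely that $\Ctrtor^\D_n(\B^\bu,\fT)=0$ for all $\fT\in\D\contra$ and $n>l_1$. So the whole content of the lemma is the pair of adjunction isomorphisms, to be verified for arbitrary $\E$ and~$\F$.

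For these I would argue by dévissage from the injective and projective cases. Set $\Theta=\B^\bu\ocn_\D^\boL\boR\Hom_\C(\B^\bu,{-})$ and $\Theta'=\boR\Hom_\C(\B^\bu,\>\B^\bu\ocn_\D^\boL{-})$; these are triangulated endofunctors of $\sD(\C\comodl)$ and $\sD(\D\contra)$ carrying, respectively, the counit $\Theta\rarrow\Id$ and the unit $\Id\rarrow\Theta'$ of the derived adjunction between $\L^\bu\ocn_\D^\boL{-}$ and $\boR\Hom_\C(\L^\bu,{-})$. The finiteness of the projective and contraflat dimensions forces $\Theta$ and $\Theta'$ to have finite cohomological amplitude: for a single comodule $\E$ the object $\boR\Hom_\C(\B^\bu,\E)$ has cohomology in the degrees $-d_2\le m\le l_1$, and applying $\B^\bu\ocn_\D^\boL{-}$ keeps it in a bounded range, and symmetrically for $\Theta'$. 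In particular $\Theta(\E)$ and $\Theta'(\F)$ are bounded, so the notion of isomorphism is the same in $\sD^+$, $\sD^-$, and $\sD^\b$, and it will be harmless to pass between these. By Lemma~\ref{coalgebra-contains-injectives-projectives}(a) the counit $\Theta\rarrow\Id$ is an isomorphism on every injective left $\C$\+comodule, and by part~(b) of the same lemma the unit $\Id\rarrow\Theta'$ is an isomorphism on every projective left $\D$\+contramodule.

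It then remains to propagate these isomorphisms to all objects. For $\Theta\rarrow\Id$ I would restrict to $\sD^+(\C\comodl)$ and apply the standard lemma on way\+out functors \cite{Har}: the category $\C\comodl$ has enough injectives, the functors $\Theta$ and $\Id$ are of finite amplitude hence way\+out in both directions, and they agree on the cogenerating class of injectives, so the natural transformation is an isomorphism on all of $\sD^+(\C\comodl)$, in particular on every comodule $\E$ placed in degree~$0$. Dually, restricting $\Id\rarrow\Theta'$ to $\sD^-(\D\contra)$ and using that $\D\contra$ has enough projectives, the way\+out dévissage over the generating class of projectives shows $\Id\rarrow\Theta'$ is an isomorphism on every contramodule~$\F$. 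Combining this with the automatic vanishing conditions of the first paragraph, every comodule lies in $\sE_{l_1}$ and every contramodule in $\sF_{l_1}$, that is $\sE_{l_1}=\C\comodl$ and $\sF_{l_1}=\D\contra$.

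The step I expect to be the main obstacle is precisely this last propagation: because comodules generally have infinite injective dimension (and contramodules infinite projective dimension), one cannot reduce an arbitrary $\E$ to a finite complex of injectives by a naive induction, so a plain finite dévissage is unavailable. This is what makes the finite cohomological amplitude of $\Theta$ and $\Theta'$ indispensable and what channels the argument through the way\+out functor lemma; it is also the precise point at which the finite projective and contraflat dimension hypotheses defining a \emph{dedualizing} (as opposed to merely pseudo\+dualizing) complex are used.
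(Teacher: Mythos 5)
Your proof is correct, and it rests on the same two pillars as the paper's: the $\Ext_\C$/$\Ctrtor^\D$ vanishing for arbitrary objects is immediate from the choice of~$l_1$ in condition~(i), and the adjunction morphisms are isomorphisms on injective comodules and projective contramodules by Lemma~\ref{coalgebra-contains-injectives-projectives}. Where you genuinely diverge is in how the isomorphism is propagated from injectives/projectives to arbitrary objects. The paper never argues this directly inside the proof of the lemma: it checks that $\sE=\C\comodl$ and $\sF=\D\contra$ satisfy the abstract conditions (I)--(IV) of Section~\ref{abstract-classes-secn} with $l_2=d_2$ --- where (III)--(IV) follow from condition~(i) by exactly the amplitude computation you perform --- and then invokes Lemma~\ref{adjunction-isomorphisms-follow} together with the discussion following it, giving $\C\comodl=\sE\subset\sE_{l_1}$ and $\D\contra=\sF\subset\sF_{l_1}$; that lemma is in turn proved either by the iterative resolution argument of \cite[Lemma~4.1]{Pps} or as a byproduct of the derived-functor machinery behind Theorem~\ref{coalgebra-generalized-main-thm}. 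You instead run the propagation through the classical lemma on way-out functors, exploiting the finite cohomological amplitude of $\Theta=\B^\bu\ocn_\D^\boL\boR\Hom_\C(\B^\bu,{-})$ and $\Theta'$ on $\sD^+(\C\comodl)$ and $\sD^-(\D\contra)$ respectively. Both routes are valid; yours is more elementary and self-contained (it bypasses the apparatus of Section~\ref{abstract-classes-secn} entirely), whereas the paper's reduction to (I)--(IV) is a two-line proof given the machinery already in place and simultaneously plugs this example into Theorem~\ref{coalgebra-lower-main-thm}. Your closing remark correctly isolates the role of the finite projective and contraflat dimension hypotheses: they are precisely what makes the d\'evissage bounded, i.e., what makes conditions (III)--(IV) hold for the \emph{whole} comodule and contramodule categories.
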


\begin{proof}
 In view of Lemma~\ref{adjunction-isomorphisms-follow} and
the subsequent discussion, it suffices to check that
the conditions~(I\+-IV) of Section~\ref{abstract-classes-secn} hold for
the classes $\sE=\C\comodl$ and $\sF=\D\contra$ with the given
parameter~$l_1$ and some $l_2\ge d_2$.
 Indeed, let us take $l_2=d_2$.
 Then the conditions~(I\+-II) are obvious, and the conditions~(III\+-IV)
follow from~(i).
\end{proof}

 It is clear from Lemma~\ref{dedualizing-bass-auslander} that for
a dedualizing complex of $\C$\+$\D$\+bicomodules $\B^\bu$ one has
$$
 \sD'_{\B^\bu}(\C\comodl)=\sD(\C\comodl)
 \quad\text{and}\quad
 \sD''_{\B^\bu}(\D\contra)=\sD(\D\contra).
$$
 The next corollary is a generalization of~\cite[Theorem~3.6]{Pmc}.

\begin{cor}
 Let\/ $\C$ and\/ $\D$ be coassociative coalgebras over~$k$, and\/
$\B^\bu$ be a dedualizing complex of\/ $\C$\+$\D$\+bicomodules.
 Then for any conventional or absolute derived category symbol\/
$\star=\b$, $+$, $-$, $\varnothing$, $\abs+$, $\abs-$, or\/~$\abs$,
there is a triangulated equivalence\/ $\sD^\star(\C\comodl)\simeq
\sD^\star(\D\contra)$ provided by (appropriately defined) mutually
inverse derived functors\/ $\boR\Hom_\C(\B^\bu,{-})$ and\/
$\B^\bu\ocn_\D^\boL{-}$.
\end{cor}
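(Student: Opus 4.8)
The plan is to deduce this Corollary immediately from Lemma~\ref{dedualizing-bass-auslander} together with Theorem~\ref{coalgebra-lower-main-thm} (equivalently, the special case of Theorem~\ref{coalgebra-generalized-main-thm}), all the substantive work having already been carried out in those results. First I would fix the parameter~$l_1$ exactly as in the paragraph preceding Lemma~\ref{dedualizing-bass-auslander}, so that the projective dimension of $\B^\bu$ as a complex of left $\C$\+comodules and its contraflat dimension as a complex of right $\D$\+comodules both do not exceed~$l_1$; as observed there, either bound already forces $l_1\ge d_1$, so the Bass and Auslander classes $\sE_{l_1}(\B^\bu)$ and $\sF_{l_1}(\B^\bu)$ are defined for this value of~$l_1$.

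The key input is Lemma~\ref{dedualizing-bass-auslander}, which asserts that, for this choice of~$l_1$, the Bass class of comodules and the Auslander class of contramodules fill up the entire abelian categories: $\sE_{l_1}=\C\comodl$ and $\sF_{l_1}=\D\contra$. Conceptually, this records that the finite projective/contraflat dimension condition~(i) in the definition of a dedualizing complex is precisely what is needed to verify the conditions~(I\+-IV) of Section~\ref{abstract-classes-secn} for the pair $(\C\comodl,\D\contra)$ with $l_2=d_2$; whence $\C\comodl\subset\sE_{l_1}$ and $\D\contra\subset\sF_{l_1}$ by the discussion following Lemma~\ref{adjunction-isomorphisms-follow}, and the reverse inclusions are trivial.

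With these identifications in hand, I would simply specialize Theorem~\ref{coalgebra-lower-main-thm} to $\L^\bu=\B^\bu$. For each symbol $\star=\b$, $+$, $-$, $\varnothing$, $\abs+$, $\abs-$, or~$\abs$, that theorem supplies a triangulated equivalence $\sD^\star(\sE_{l_1})\simeq\sD^\star(\sF_{l_1})$ realized by the mutually inverse derived functors $\boR\Hom_\C(\B^\bu,{-})$ and $\B^\bu\ocn_\D^\boL{-}$. Substituting $\sE_{l_1}=\C\comodl$ and $\sF_{l_1}=\D\contra$ turns this into the asserted equivalence $\sD^\star(\C\comodl)\simeq\sD^\star(\D\contra)$, the functors now being the ordinary derived functors on the full comodule and contramodule categories. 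Since the derivation is a direct substitution, there is no genuine obstacle remaining; the only point worth double-checking is that the ``appropriately defined'' derived functors of Theorem~\ref{coalgebra-generalized-main-thm} indeed reduce, when $\sE=\C\comodl$ and $\sF=\D\contra$ (so that $\sJ=\C\comodl_\inj$ and $\sP=\D\contra_\proj$ are the actual injective and projective objects), to $\boR\Hom_\C(\B^\bu,{-})$ and $\B^\bu\ocn_\D^\boL{-}$ as computed by the homotopy injective and homotopy projective resolutions used throughout Section~\ref{auslander-bass-secn}.
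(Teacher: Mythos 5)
Your proposal is correct and follows exactly the paper's route: the paper's proof of this corollary is simply ``This is a particular case of Theorem~\ref{coalgebra-lower-main-thm},'' with the substitution $\sE_{l_1}=\C\comodl$ and $\sF_{l_1}=\D\contra$ justified by the immediately preceding Lemma~\ref{dedualizing-bass-auslander}, whose proof in turn reduces to checking conditions~(I\+-IV) for the full categories with $l_2=d_2$ via Lemma~\ref{adjunction-isomorphisms-follow} --- precisely the chain of reasoning you spell out. Your version merely makes explicit what the paper leaves as a one-line reference.
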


\begin{proof}
 This is a particular case of Theorem~\ref{coalgebra-lower-main-thm}.
\end{proof}

\Section{Main Diagram}

 The aim of this section is to discuss the results
formulated in Section~\ref{introd-main-results}, and in particular,
the ones encoded in the diagram~\eqref{main-results-diagram}.
 In fact, all of them have been proved already in the preceding
sections; see, in particular,
the diagram~\eqref{abstract-star-triangulated-diagram} in
Section~\ref{abstract-classes-secn}, generalizing the middle
square in~\eqref{main-results-diagram}.
 So in this section we present a kind of overview and conclusion.
 
 Let $\L^\bu$ be a pseudo-dualizing complex of bicomodules for a pair
of coalgebras $\C$ and $\D$, and let $\sE\subset\C\comodl$ and
$\sF\subset\D\contra$ be a pair of classes of comodules and
contramodules satisfying the conditions~(I\+-IV) with some parameters
$l_1\ge d_1$ and $l_2\ge d_2$.
 Then there is the following diagram of triangulated functors:
\begin{equation} \label{abstract-main-diagram}
\begin{tikzcd}
\Hot(\C\comodl) \arrow[d, two heads]
\arrow[ddd, two heads, bend right=71] &&&&
\Hot(\D\contra) \arrow[d, two heads]
\arrow[ddd, two heads, bend left=71] \\
\sD^\co(\C\comodl) \arrow[d] 
\arrow[u, tail, bend right=70]
\arrow[dd, two heads, bend right=62] &&&&
\sD^\ctr(\D\contra) \arrow[d] 
\arrow[u, tail, bend left=70]
\arrow[dd, two heads, bend left=62] \\
\sD(\sE) \arrow[d, two heads]
\arrow[rrrr, Leftrightarrow, no head, no tail] &&&&
\sD(\sF) \arrow[d, two heads] \\
\sD(\C\comodl) \arrow[uuu, tail, bend right=82]
\arrow[uu, tail, bend right=70]
\arrow[u, tail, bend right=60] &&&&
\sD(\D\contra) \arrow[uuu, tail, bend left=82]
\arrow[uu, tail, bend left=70]
\arrow[u, tail, bend left=60]
\end{tikzcd}
\end{equation}

 Here the uppermost straight vertical arrows $\Hot(\C\comodl)\rarrow
\sD^\co(\C\comodl)$ and $\Hot(\sD\contra)\rarrow\sD^\ctr(\D\contra)$
are the canonical Verdier quotient functors.
 The functor $\Hot(\C\comodl)\rarrow\sD^\co(\C\comodl)$ has a fully
faithful right adjoint, identifying the coderived category
$\sD^\co(\C\comodl)$ with the homotopy category of complexes of
injective comodules $\Hot(\C\comodl_\inj)$
\cite[Theorem~4.4(a,c)]{Pkoszul}.
 Similarly, the functor $\Hot(\D\contra)\rarrow\sD^\ctr(\D\contra)$
has a fully faithful left adjoint, identifying the contraderived
category $\sD^\ctr(\D\contra)$ with the homotopy category of complexes
of projective contramodules $\Hot(\D\contra_\proj)$
\cite[Theorem~4.4(b,d)]{Pkoszul}.
 These fully faithful adjoints are shown on the diagram as
the uppermost short curvilinear arrows.

 The functor $\sD^\co(\C\comodl)\rarrow\sD(\sE)$ shown by the leftmost
middle straight vertical arrow is constructed as the composition
$$
 \sD^\co(\C\comodl)\simeq\Hot(\C\comodl_\inj)\lrarrow\sD(\sE)
$$
of the natural equivalence $\sD^\co(\C\comodl)\simeq
\Hot(\C\comodl_\inj)$ with the triangulated functor
$\Hot(\C\comodl_\inj)\rarrow\sD(\sE)$ induced by the inclusion of
additive/exact categories $\C\comodl_\inj\rarrow\sE$.

 Similarly, the functor $\sD^\ctr(\D\contra)\rarrow\sD(\sF)$ shown by
the rightmost middle straight vertical arrow is constructed as
the composition
$$
 \sD^\ctr(\D\contra)\simeq\Hot(\D\contra_\proj)\lrarrow\sD(\sF)
$$
of the natural equivalence $\sD^\ctr(\D\contra)\simeq
\Hot(\D\contra_\proj)$ with the triangulated functor
$\Hot(\D\contra_\proj)\rarrow\sD(\sF)$ induced by the inclusion of
additive/exact categories $\D\contra_\proj\rarrow\sF$.

 The horizontal double line is the triangulated equivalence of
Theorem~\ref{coalgebra-generalized-main-thm} for $\star=\varnothing$.

 The left lower straight vertical arrow $\sD(\sE)\rarrow\sD(\C\comodl)$
is induced by the inclusion of exact/abelian categories $\sE\rarrow
\C\comodl$.
 Similarly, the right lower straight vertical arrow $\sD(\sF)\rarrow
\sD(\D\contra)$ is induced by the inclusion of exact/abelian
categories $\sF\rarrow\D\contra$.
 It was shown in Section~\ref{homotopy-adjusted-coresolving-secn}
that these two functors are Verdier quotient functors.
 
 The composition $\sD^\co(\C\comodl)\rarrow\sD(\sE)\rarrow
\sD(\C\comodl)$ is the canonical Verdier quotient functor.
 So is the long composition $\Hot(\C\comodl)\rarrow\sD^\co(\C\comodl)
\rarrow\sD(\C\comodl)$.
 These Verdier quotient functors (shown on the diagram by
the two-headed long curvilinear arrows in the left-hand side) have
fully faithful right adjoints (shown by the long curvilinear arrows with
tails), which were constructed in~\cite[Sections~2.4 and~5.5]{Pkoszul}
(see also~\cite[Theorem~1.1(c)]{Pmc}).

 Similarly, the composition $\sD^\ctr(\D\contra)\rarrow\sD(\sF)\rarrow
\sD(\D\contra)$ is the canonical Verdier quotient functor.
 So is the long composition $\Hot(\D\contra)\rarrow\sD^\ctr(\D\contra)
\rarrow\sD(\D\contra)$.
 These Verdier quotient functors (shown on the diagram by
the two-headed long curvilinear arrows in the right-hand side) have
fully faithful left adjoints (shown by the long curvilinear arrows with
tails), which were constructed in~\cite[Sections~2.4 and~5.5]{Pkoszul}
(see also~\cite[Theorem~1.1(a)]{Pmc}).

 The lower curvilinear arrows with tails show triangulated functors
$\sD(\C\comodl)\rarrow\sD(\sE)$ and $\sD(\D\contra)\rarrow\sD(\sF)$,
which are fully faithful and adjoint to the above-mentioned functors
$\sD(\sE)\rarrow\sD(\C\comodl)$ and $\sD(\sF)\rarrow\sD(\D\contra)$ on
the respective sides.
 These functors are described in the following proposition.

\begin{prop}
\textup{(a)} The triangulated functor\/ $\sD(\sE)\rarrow\sD(\C\comodl)$
induced by the inclusion of exact/abelian categories\/ $\sE\rarrow
\C\comodl$ is a Verdier quotient functor, and has a fully faithful
right adjoint. \par
\textup{(b)} The triangulated functor\/ $\sD(\sF)\rarrow\sD(\D\contra)$
induced by the inclusion of exact/abelian categories\/ $\sF\rarrow
\D\contra$ is a Verdier quotient functor, and has a fully faithful
left adjoint.
\end{prop}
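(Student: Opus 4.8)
The plan is to deduce both parts immediately from Theorem~\ref{weak-pseudoderived}, which was proved in Section~\ref{homotopy-adjusted-coresolving-secn} precisely for this purpose. For part~(a), I would apply Theorem~\ref{weak-pseudoderived}(a) with $\sA=\C\comodl$, so that the only task is to verify its two hypotheses. The abelian category $\C\comodl$ has enough homotopy injective complexes of injectives, as recalled in Section~\ref{homotopy-adjusted-coresolving-secn} (after~\cite[Sections~2.4 and~5.5]{Pkoszul} and~\cite[Theorem~1.1(c)]{Pmc}). It remains to see that $\sE$ is a coresolving subcategory of $\C\comodl$ containing the injective objects: condition~(I) provides closure under extensions and under cokernels of monomorphisms, together with the containment of all injective $\C$\+comodules; and since $\C\comodl$ has enough injectives, every comodule embeds into an injective object, which lies in $\sE$, so every object of $\C\comodl$ is a subobject of an object of $\sE$, as required for a coresolving subcategory.

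For part~(b), I would argue dually via Theorem~\ref{weak-pseudoderived}(b) with $\sB=\D\contra$. The category $\D\contra$ has enough homotopy projective complexes of projectives (also recalled in Section~\ref{homotopy-adjusted-coresolving-secn}, after~\cite[Lemma~6.1 and Corollary~6.6]{PS4} and~\cite[Theorem~1.1(a)]{Pmc}), and it has enough projective objects. Together with condition~(II), this shows that $\sF$ is a resolving subcategory of $\D\contra$ containing the projectives: $\sF$ is closed under extensions and kernels of surjective morphisms, contains all projective $\D$\+contramodules, and every contramodule is a quotient of a projective lying in~$\sF$.

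With these hypotheses verified, Theorem~\ref{weak-pseudoderived}(a) and~(b) yield directly that the inclusion-induced functors $\sD(\sE)\rarrow\sD(\C\comodl)$ and $\sD(\sF)\rarrow\sD(\D\contra)$ are Verdier quotient functors admitting, respectively, a right adjoint~$\rho$ and a left adjoint~$\lambda$; moreover these adjoints are fully faithful, since any triangulated functor adjoint to a Verdier quotient functor is fully faithful~\cite[Proposition~I.1.3]{GZ}. I do not expect any genuine obstacle here: the proposition merely repackages the abstract statement of Theorem~\ref{weak-pseudoderived} in the special cases of comodule and contramodule categories, and the only point needing (routine) attention is the upgrade of conditions~(I) and~(II), in the presence of enough injectives in $\C\comodl$ and enough projectives in $\D\contra$, to the coresolving and resolving hypotheses of that theorem.
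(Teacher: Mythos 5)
Your proposal is correct and follows essentially the same route as the paper: the paper's proof also consists of observing that the statement is a particular case of Theorem~\ref{weak-pseudoderived}, with the hypotheses supplied by conditions~(I\+-II) and the existence of enough homotopy injective complexes of injectives in $\C\comodl$ and enough homotopy projective complexes of projectives in $\D\contra$. The only material the paper adds beyond your argument is an explicit description of the adjoints $\rho$ and $\lambda$ as the compositions $\sD(\C\comodl)\rarrow\sD^\co(\C\comodl)\rarrow\sD(\sE)$ and $\sD(\D\contra)\rarrow\sD^\ctr(\D\contra)\rarrow\sD(\sF)$ appearing in the main diagram, which is not needed for the assertion itself.
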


\begin{proof}
 This is a particular case of Theorem~\ref{weak-pseudoderived}, which
was mentioned already in Section~\ref{abstract-classes-secn}.
 In part~(a), the desired functor $\rho\:\sD(\C\comodl)\rarrow\sD(\sE)$
is constructed as the composition
$$
 \sD(\C\comodl)\overset\theta\lrarrow\sD^\co(\C\comodl)\lrarrow\sD(\sE),
$$
or which is the same, the composition
$$
 \sD(\C\comodl)\lrarrow\Hot(\C\comodl)\lrarrow\sD(\sE)
$$
of the functors on the diagram.
 In part~(b), the desired functor $\lambda\:\sD(\D\contra)\rarrow
\sD(\sF)$ is constructed as the composition
$$
 \sD(\D\contra)\overset\varkappa\lrarrow\sD^\ctr(\D\contra)
 \lrarrow\sD(\sF),
$$
or which is the same, the composition
$$
 \sD(\D\contra)\lrarrow\Hot(\D\contra)\lrarrow\sD(\sF)
$$
of the functors on the diagram.
\end{proof}

 Finally, according to Sections~\ref{introd-t-structures}
and~\ref{abstract-classes-secn}, there are two (possibly degenerate)
t\+structures of the derived type on the triangulated category
$\sD(\sE)\simeq\sD(\sF)$.
 The abelian hearts are $\sA=\C\comodl$ and $\sB=\D\contra$.
 This pair of t\+structures satisfies the nontriviality condition of
Remark~\ref{pairs-of-t-structures-nontriviality-remark}.

\bigskip

\end{document}